\newtheorem{thm}{Theorem}[section]
\newtheorem{theorem}[thm]{Theorem}
\newtheorem{proposition}[thm]{Proposition}
\newtheorem{lemma}[thm]{Lemma}
\newtheorem{cor}[thm]{Corollary}
\newtheorem{lem}[thm]{Lemma}
\newtheorem{prop}[thm]{Proposition}
\theoremstyle{definition}
\newtheorem{definition}{Definition}[section]
\theoremstyle{observation}
\theoremstyle{definition}
\newtheorem{example}{Example}[section]
\newcommand{\A}{\mathcal{A}}
\newcommand{\B}{\mathcal{B}}
\newcommand{\M}{\mathcal{M}}
\renewcommand{\S}{\mathcal{S}}
\newcommand{\defin}[1]{{\it #1}}
\newcommand{\R}{\mathbb{R}}
\newcommand{\N}{\mathbb{N}}
\newcommand{\Q}{\mathbb{Q}}
\newtheorem{remark}[thm]{Remark}
{\bf}{\it}
{\bf}{\it}
\newtheorem*{riemann-mapping-theorem}{Riemann Mapping  Theorem}{\bf}{\it}
\newtheorem*{onequarter}{Koebe 1/4  Theorem}{\bf}{\it}
{\bf}{\it}
{\bf}{\it}
{\bf}{\it}
{\bf}{\it}
\newtheorem*{fshb}{Fatou-Shishikura Bound}{\bf}{\it}
{\bf}{\it}
{\bf}{\it}
{\bf}{\it}
\newenvironment{pf*}[1]{\proof[#1]}{\endproof}
\newcommand{\cal}[1]{{\mathcal #1}}
\newcommand{\beq}{\begin{equation}}
\newcommand{\eeq}{\end{equation}}
\newcommand{\ve}{\varepsilon}
\newcommand{\de}{\delta}
\newcommand{\be}{\beta}
\newtheorem{defn}{Definition}[section]
\renewcommand{\deg}{\operatorname{deg}}
\newcommand{\riem}{{\hat{\CC}}}
\newcommand{\dist}{\operatorname{dist}}
\newcommand{\cl}{\operatorname{cl}}
\renewcommand{\mod}{\operatorname{mod}}
\newcommand{\eps}{\epsilon}
\numberwithin{equation}{section}
\newcommand{\thmref}[1]{Theorem~\ref{#1}}
\newcommand{\propref}[1]{Proposition~\ref{#1}}
\newcommand{\lemref}[1]{Lemma~\ref{#1}}
\newcommand{\corref}[1]{Corollary~\ref{#1}}
\newcommand{\figref}[1]{Figure~\ref{#1}}
\newcommand{\supp}{\operatorname{Supp}}
\newcommand{\cA}{{\cal A}}
\newcommand{\cM}{{\cal M}}
\newcommand{\cB}{{\cal B}}
\renewcommand{\cH}{{\cal H}}
\renewcommand{\cD}{{\cal D}}
\newcommand{\cS}{{\cal S}}
\newcommand{\PP}{{\Bbb P}}
\newcommand{\CC}{{\mathbb C}}
\newcommand{\RR}{{\mathbb R}}
\newcommand{\EE}{{\mathbb E}}
\newcommand{\ZZ}{{\mathbb Z}}
\newcommand{\NN}{{\mathbb N}}
\newcommand{\QQ}{{\mathbb Q}}
\newcommand{\ignore}[1]{{}}
\title[Computability of Brolin-Lyubich Measure]{Computability of Brolin-Lyubich Measure}
\author{Ilia Binder, Mark Braverman, Cristobal Rojas, Michael Yampolsky}
\thanks{I.B. and M.Y. were partially supported by NSERC Discovery Grants. Part of this work was done while M.B. was a Postdoctoral Fellow at Microsoft Research, New England.}
\date{September 9, 2010}
\begin{document}
\begin{abstract}
Brolin-Lyubich measure $\lambda_R$ of a rational endomorphism $R:\riem\to\riem$ with $\deg R\geq 2$ is the unique invariant measure of maximal entropy $h_{\lambda_R}=h_{\text{top}}(R)=\log d$. Its support is the Julia set $J(R)$. We demonstrate that $\lambda_R$ is always computable by an algorithm which has access to coefficients of $R$, even when $J(R)$ is not computable. In the case when $R$ is a polynomial, Brolin-Lyubich measure coincides with the harmonic measure of the basin of infinity. We find a sufficient condition for computability of the harmonic measure of a domain, which holds for the basin of infinity of a polynomial mapping, and show that computability may fail for a general domain.
\end{abstract}

\maketitle

\section{Foreword}

This paper continues the line of works \cite{BY,BBY1,BBY2,BY-MMJ,BY-book} of several of the authors on algorithmic computability of Julia sets.
In this brief introduction we outline our results and attempt to give a brief motivation for them.

\subsection*{Numerical simulation of a chaotic dynamical system: the modern paradigm}
A dynamical system can be simple, and thus easy to implement numerically. Yet its orbits may exhibit a very complex
behaviour. The famous paper of Lorenz \cite{Lorenz}, for example, described a rather simple nonlinear system of
ordinary differential equations $\bar x'(t)=F(\bar x)$ in three dimension which exhibits {\it chaotic} dynamics.
In particular, while the flow of the system $\Phi_t(\bar x_0)$ is easy to calculate with an arbitrary precision for 
any initial value $x_0$ and any time $t$. However, any error in estimating the initial value $\bar x_0$ grows exponentially
with $t$. This renders impractical attempting to numerically simulate the behaviour of a trajectory of the system for an extended 
period of time: small computational errors are magnified very rapidly. If we recall that the Lorenz system was introduced
as a simple model of weather forecasting, one understands why predicting weather conditions several days in advance
is difficult to do with any accuracy. 

On the other hand, there is a great regularity in the global structure of a typical trajectory of Lorenz system. As was
ultimately shown by Tucker \cite{Tucker}, there exists a set $\cA\subset \RR^3$ such that for almost every initial point $\bar x_0$, the 
limit set of the orbit,
$$\omega(\bar x_0)=\cA.$$
This set is the {\it attractor} of the system \cite{Smale,Milnor-attractor}. Moreover, for any continuous test function $\psi$, the 
time average of $\psi$ along a typical orbit 
$$\frac{1}{T}\int_{t=0}^T \psi(\Phi_{\bar x_0}(t) dt$$
converges to the integral $\int \psi d\mu$ with respect to a measure $\mu$ supported on $\cA$. 

Thus, both the spatial layout and the statistical properties of a large segment of a typical trajectory can be understood, and,
indeed, simulated on a computer: even mathematicians unfamiliar with dynamics have seen the butterfly-shaped picture of  Lorenz attractor $\cA$.
This example summarizes the modern paradigm of numerical study of chaos: while the simulation of an individual orbit for an extended
period of time does not make a practical sense, one should study the limit set of a typical orbit (both as a spatial object and as
a statistical distribution).
A modern summary of this paradigm is found, for example, in the article of J.~Palis \cite{Palis}.

\subsection*{Julia sets as counterexamples, and the topic of this paper}

Julia sets are {\it repellers} of discrete dynamical systems generated by rational maps $R$ of the Riemann sphere $\hat \CC$ of degree $d\geq 2$.
For all but finitely many points $z\in \hat\CC$ the limit of the $n$-th preimages $R^{-n}(z)$ 
coincides with the Julia set $J(R)$. The dynamics of 
$R$ on the set $J$ is chaotic, again rendering numerical simulation of individual orbits impractical. Yet Julia sets are among the
most drawn mathematical objects, and countless programs have been written for visualizing them. 

In spite of this, two of the authors showed in \cite{BY-MMJ} that there exist quadratic polynomials $f_c(z)=z^2+c$ with the following
paradoxical properties:
\begin{itemize}
\item an iterate $f_c(z)$ can be effectively computed with an arbitrary precision;
\item there does not exist an algorithm to visualize $J(f_c)$ with an arbitrary finite precision.
\end{itemize}

This phenomenon of {\it non-computability} is rather subtle and rare. For a detailed exposition, the reader is referred to the
monograph \cite{BY-book}. In practical terms it should be seen as a tale of caution in applying the above paradigm.

We cannot accurately simulate the set of limit points of the preimages $(f_c)^{-n}(z)$, but what about their statistical distribution?
The question makes sense, as for all $z\neq \infty$ and every continuous test function $\psi$, the averages
$$\frac{1}{2^n}\sum_{w\in (f_c)^{-n}(z)} \psi(w)\underset{n\to\infty}{\longrightarrow}\int \psi d\lambda,$$
where $\lambda$ is the {\it Brolin-Lyubich probability measure} \cite{Brolin,Lyubich} supported on the Julia set $J(f_c)$.
We can thus ask whether the value of the integral on the right-hand side can be algorithmically computed with an arbitrary precision.

Even if 
$J(f_c)=\supp(\lambda)$ is not a computable set,  the answer does not {\it a priori} have to be negative. Informally speaking,
a positive answer would imply a dramatic
difference between the rates of convergence in the following two limits:
$$\lim (f_c)^{-n}(z)\underset{\text{\small Hausdorff}}{\longrightarrow} J(f_c)\text{ and }\lim \frac{1}{2^{n}}\sum_{w\in (f_c)^{-n}(z)} \delta_w
\underset{\text{\small weak}}{\longrightarrow} \lambda.$$

The main results of the present paper are the following:

\medskip
\noindent
{\bf Theorem A.} {\it The Brolin-Lyubich measure is always computable.}

\medskip
\noindent
The result of Theorem A is {\em uniform}, in the sense that there is a single algorithm that takes the rational map $R$ as a parameter
and computes the corresponding Brolin-Lyubich measure. Surprisingly, the proof of Theorem A does not involve much analytic machinery. The result follows from the general computable properties of the relevant space of measures. 

Using the analytic tools given by the work of Drasin and Okuyama \cite{okuyama}, or 
Dinh and Sibony \cite{DinhSib}, we get the following:

\medskip
\noindent
{\bf Theorem B.} {\it For each rational map $R$, there is an algorithm $\cA(R)$
that computes the Brolin-Lyubich measure in exponential time.}

\medskip
\noindent
The running time of $\cA(R)$ will be of the form $\exp (c(R)\cdot n)$, where $n$ is 
the precision parameter, and $c(R)$ is a constant that depends only on the map $R$ (but
not on $n$). Theorems A and B are not comparable, since Theorem B bounds the growth
of the computation's running time in terms of the precision parameter, while  Theorem A
gives {\em a single} algorithm that works for all rational functions $R$.

Lastly,
the Brolin-Lyubich measure for a polynomial coincides with the harmonic measure of the complement of the filled Julia set.
As shown in \cite{BY-MMJ} by two of the authors,
the filled Julia set of a polynomial is always computable.
In view of Theorem A, it is natural to ask what property of a computable 
compact set in the plane ensures computability of the harmonic measure of the complement. 
We show:

\medskip
\noindent
{\bf Theorem C.} {\it If a closed set $K\subset \riem$ is computable and uniformly perfect, and has a connected complement, then the harmonic measure of the complement is 
computable}.

\medskip
\noindent
It is well-known \cite{MR92} that filled Julia sets are uniformly perfect. Theorem C thus implies Theorem A in the polynomial case.
Computability of the set $K$ is not enough to ensure computability of the harmonic measure:
we present a counter-example of a computable closed
set with a non-computable harmonic measure of the complement.

 \section{Julia sets of rational mappings}
\label{sec:intro-dyn}

\subsection{Dynamics on the Riemann sphere}
We attempt to summarize here for convenience of a reader, unfamiliar with Complex Dynamics, the
basic facts about Julia sets of rational mappings. An excellent book of Milnor \cite{Mil} presents a detailed
and self-contained introduction to the subject; proofs of most of the facts we state can be found there.

We first recall that the Riemann sphere $\riem$ is the Riemann surface with the topological type of the 2-sphere,
$S^2$. Such a complex-analytic manifold can be constructed by gluing together two copies of the complex plane
$C_1=\CC$, $C_2=\CC$ by identifying $z\in C_1\setminus \{0\}$ with $w=1/z\in C_2$. 
This procedure can be loosely described as adjoining a point at infinity to the complex plane $C_1$ -- we will 
denote $\infty$ the origin in $C_2$ (so that ``$\infty=1/0$''). 
It is convenient sometimes to visualize $\riem$ as the unit sphere 
$$S^2=\{x^2+y^2+z^2=1\}\subset \RR^3.$$
To this end, consider the stereographic projection from the ``north pole'' $(0,0,1)\subset S^2$, which sends $S^2\setminus \{(0,0,1)\}$
to the plane $z=0$ which we naturally identify with $C_1=\CC$ by $z=x+iy$. In this model, the north pole becomes the point at infinity.

The Euclidean metric on $\RR^3$ restricted to $S^2$ is transferred by the stereographic projection to the {\it spherical metric}
on $\CC$. This metric is given by
$$ds^2=\left( \frac{2}{1+|z|^2}\right)|dz|^2.$$
We will refer to the spherical distance as $d_\riem$, as opposed to the usual Euclidean distance $d$.

A rational function $R(z)=P(z)/Q(z)$ induces an analytic covering $\riem\to\riem$ branched at the finitely many {\it critical points}
$\zeta\in\riem$ with $R'(\zeta)=0$. The degree $d$ of this covering is finite, and coincides with the algebraic degree of $R$:
$$d=\max(\deg(P),\deg(Q)),$$
assuming $P$ and $Q$ have no common factors. Every 
 analytic branched covering of $\riem$ of a finite degree is given by a rational
function.

We will consider a rational mapping $R$ of degree $\deg R=d\geq 2$ (that is, non-linear)  as a dynamical system on the Riemann
sphere; and denote $R^n$ the $n$-th iterate of $R$. The $R$-orbit of a point $\zeta$ is the sequence $\{R^n(\zeta)\}_{n=0}^\infty.$
The Julia set is defined as the complement of the set where the dynamics is Lyapunov-stable:

\begin{defn}
Denote $F(R)$ the set of points $z\in\riem$ having an open neighborhood $U(z)$ on which the
family of iterates $R^n|_{U(z)}$ is equicontinuous; that is for every $\eps>0$ there exists $\delta>0$ such that 
if $d_\riem(z,w)<\delta$ then for every $n\in\NN$ one has $d_\riem(R^n(z),R^n(w))<\eps.$
 The set $F(R)$ is called the Fatou set of $R$
and its complement $J(R)=\riem\setminus F(R)$ is the Julia set.
\end{defn}

\noindent
In the case when the rational mapping is a polynomial $$P(z)=a_0+a_1z+\cdots+a_dz^d:\CC\to\CC$$ an equivalent
way of defining the Julia set is as follows. Obviously, there exists a neighborhood of $\infty$ on $\riem$
on which the iterates of $P$ uniformly converge to $\infty$. Denoting $A(\infty)$ the maximal such domain of attraction
of $\infty$ we have $A(\infty)\subset F(R)$. We then have 
$$J(P)=\partial A(\infty).$$
The bounded set $\riem \setminus  A(\infty)$ is called {\it the filled Julia set}, and denoted $K(P)$;
it consists of points whose orbits under $P$ remain bounded:
$$K(P)=\{z\in\riem|\;\sup_n|P^n(z)|<\infty\}.$$

\noindent
For future reference, let us summarize in a proposition below the main properties of Julia sets:

\begin{prop}
\label{properties-Julia}
Let $R:\riem\to\riem$ be a rational function. Then the following properties hold:
\begin{itemize}
\item[(a)] $J(R)$ is a non-empty compact subset of $\riem$ which is completely
invariant: $R^{-1}(J(R))=J(R)$;
\item[(b)] $J(R)=J(R^n)$ for all $n\in\NN$;
\item[(c)] $J(R)$ has no isolated points;
\item[(d)] if $J(R)$ has non-empty interior, then it is the whole of $\riem$;
\item[(e)] let $U\subset\riem$ be any open set with $U\cap J(R)\neq \emptyset$. Then there exists $n\in\NN$ such that
$R^n(U)\supset J(R)$;
\item[(f)] periodic orbits of $R$ are dense in $J(R)$.
\end{itemize}
\end{prop}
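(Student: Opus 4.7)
The plan is to invoke the classical package of tools centered on Montel's normal family theorem, essentially following the presentation in Milnor's monograph \cite{Mil}; most of the six items are tightly linked and follow from a common core.

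First I would dispose of (a) and (b). Compactness of $J(R)$ is immediate since $F(R)$ is visibly open: any point of $F(R)$ admits an equicontinuity neighborhood which, by definition, lies in $F(R)$. For non-emptiness, I would argue by contradiction: if $J(R)=\emptyset$, then $\{R^n\}$ is an equicontinuous family on the compact sphere $\riem$, so a subsequence converges uniformly to some continuous limit $g:\riem\to\riem$; a uniform limit of rational maps on $\riem$ is rational of finite degree, contradicting $\deg R^n=d^n\to\infty$. Complete invariance $R^{-1}(J(R))=J(R)$ follows from the fact that $R$ is a continuous open finite-sheeted branched covering: equicontinuity of $\{R^n\}$ at $z$ and at $R(z)$ are equivalent, upon composing with local inverse branches of $R$ (with the usual care at critical points). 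For (b), the same reasoning gives equivalence between normality of $\{R^k\}_k$ and of $\{R^{nk}\}_k$ at a given point, since the finitely many intermediate maps $R,R^2,\dots,R^{n-1}$ are everywhere equicontinuous.

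The heart of the proof is (e), which I would get directly from Montel. Suppose $U$ is open with $U\cap J(R)\ne\emptyset$. If $\bigcup_n R^n(U)$ omitted three distinct values in $\riem$, then $\{R^n|_U\}$ would be normal on $U$, contradicting $U\cap J(R)\ne\emptyset$. Hence $\riem\setminus\bigcup_n R^n(U)$ consists of at most two points; a standard computation shows these points are totally invariant exceptional points of $R$, and any rational map of degree $\geq 2$ has at most two such, lying in $F(R)$. Since $J(R)$ contains no exceptional points, we get $R^n(U)\supset J(R)$ for some $n$. From (e) I would deduce (c) and (d) quickly: if $z_0\in J(R)$ were isolated, a small punctured neighborhood $U$ would miss $J(R)$, but by (e) some forward image of $U$ would contain $J(R)$, which has more than one point -- impossible since $R^n(U)$ would still miss the image of $z_0$; and if $J(R)$ had interior, applying (e) to a ball $U\subset\mathrm{int}\,J(R)$ forces $J(R)\subset R^n(U)\subset J(R)$, while forward saturation from any interior point exhausts $\riem$ except for exceptional points, which necessarily lie in $J(R)$ in that case.

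The main obstacle, and the step I would save for last, is (f): density of periodic points in $J(R)$. Here I would follow Fatou's classical argument. For any open $U$ meeting $J(R)$, one aims to produce a fixed point of some iterate $R^n$ in $U$. Shrinking $U$ to avoid the finitely many critical values and exceptional points, one selects two disjoint local inverse branches $g_1,g_2$ of some $R^m$ on $U$. Then the family of functions $h_n(z)=(R^n(z)-g_1(z))/(R^n(z)-g_2(z))$ on $U$ omits the values $0,1,\infty$ unless $R^n$ has a fixed point in $U$; non-normality of $\{R^n\}$ at points of $J(R)\cap U$ together with Montel forces the existence of such a fixed point, hence a periodic point of $R$ in $U$. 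The delicate bookkeeping is in arranging the two disjoint branches and excluding the exceptional and critical data that could obstruct the construction; Milnor's exposition handles these cases cleanly, and I would quote it directly rather than reprove it here.
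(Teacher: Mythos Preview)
The paper does not actually prove this proposition: it is stated as a summary of classical background, with an explicit pointer to Milnor's book \cite{Mil} for the proofs. Your sketch follows precisely the Montel-based development in Milnor, so in spirit you are doing exactly what the paper intends the reader to do.

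A couple of small imprecisions in your sketch are worth tightening. In (c), you cannot apply (e) to a \emph{punctured} neighborhood $U$ of an isolated point $z_0$, since such $U$ misses $J(R)$; instead apply (e) to the full neighborhood $V\ni z_0$, note that $R^n(V)\cap J(R)\subset R^n(V\cap J(R))=\{R^n(z_0)\}$ by complete invariance of $F(R)$, and conclude $|J(R)|\le 1$, which contradicts $R^{-1}(z_0)\subset J(R)$ having at least two points (else $z_0$ would be exceptional and hence in $F(R)$). In (e), Montel gives only $J(R)\subset\bigcup_n R^n(U)$; getting a \emph{single} $n$ with $R^n(U)\supset J(R)$ requires the extra step (as in Milnor) of combining compactness of $J(R)$ with complete invariance, or first establishing density of backward orbits. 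Your argument for (d) is correct once phrased as: $U\subset J(R)$ and forward invariance give $\bigcup_n R^n(U)\subset J(R)$, while Montel gives $\bigcup_n R^n(U)=\riem\setminus E$ with $|E|\le 2$, so $J(R)=\riem$ since $J(R)$ is closed.
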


\noindent
Let us further comment on the last property. For a periodic point $z_0=R^p(z_0)$
of period $p$ its {\it multiplier} is the quantity $\lambda=\lambda(z_0)=DR^p(z_0)$.
We may speak of the multiplier of a periodic cycle, as it is the same for all points
in the cycle by the Chain Rule. In the case when $|\lambda|\neq 1$, the dynamics
in a sufficiently small neighborhood of the cycle is governed by the Mean
Value Theorem: when $|\lambda|<1$, the cycle is {\it attracting} ({\it super-attracting}
if $\lambda=0$), if $|\lambda|>1$ it is {\it repelling}. All repelling periodic points are in the Julia set,
and all attracting ones are in the Fatou set.

The situation is much more complicated when $|\lambda|=1$; understanding of the local dynamics
in this case is not yet complete.

One of the founders of the subject, P. Fatou, has shown that   
that for a rational mapping $R$ with $\deg R=d\geq 2$
at most finitely many periodic orbits are non-repelling. A sharp bound on their number  depending on $d$ has
been established by Shishikura; it is equal to the number of critical points of $R$ counted with
multiplicity: 

\begin{fshb}
For a rational mapping of degree $d$ the number of the non-repelling periodic
cycles taken together with the number of cycles of Herman rings is at most $2d-2$.
For a polynomial of degree $d$ the number of non-repelling periodic cycles in $\CC$
is at most $d-1$.
\end{fshb}

\noindent
 Therefore, we may refine the last statement of \propref{properties-Julia}:

\begin{itemize}
\item[(f')] $J(R)=\overline{\{\text{repelling periodic orbits of }R\}}.$
\end{itemize}

We also note a useful corollary of \propref{properties-Julia} (e):

\begin{cor}
\label{preimages dense}
Let $w\in J(R)$. Then
$$J(R)=\overline{ \bigcup_{k\geq 0} f^{-k}(w)}.$$
\end{cor}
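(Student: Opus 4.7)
The plan is to establish the two inclusions, both of which follow almost immediately from items already recorded in Proposition \ref{properties-Julia}.

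For the inclusion $\overline{\bigcup_{k\geq 0} R^{-k}(w)} \subset J(R)$, I would invoke complete invariance, property (a): since $R^{-1}(J(R)) = J(R)$, induction on $k$ gives $R^{-k}(w) \subset J(R)$ for every $k \geq 0$ (the hypothesis $w \in J(R)$ is used here). Taking the closure and using that $J(R)$ is compact, hence closed, by (a), yields the desired containment.

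For the reverse inclusion $J(R) \subset \overline{\bigcup_{k\geq 0} R^{-k}(w)}$, I would fix $z \in J(R)$ and an arbitrary open neighborhood $U$ of $z$. Since $U \cap J(R)$ is nonempty, property (e) provides an integer $n \in \NN$ with $R^n(U) \supset J(R)$. In particular $w \in R^n(U)$, so there is some $z' \in U$ with $R^n(z') = w$, i.e.\ $z' \in R^{-n}(w) \subset \bigcup_{k\geq 0} R^{-k}(w)$. Thus every neighborhood of $z$ meets the union of preimages, which is exactly the assertion that $z$ lies in the closure.

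There is essentially no obstacle: the entire content is packaged in properties (a) and (e) of the Julia set, so the proof is a short two-sided inclusion argument. The only thing to be mindful of is that the hypothesis $w \in J(R)$ is used in both directions — in the forward direction to place preimages into $J(R)$, and implicitly in the reverse direction so that $w$ is guaranteed to be hit by $R^n(U)$ once that image swallows $J(R)$.
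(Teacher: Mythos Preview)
Your proof is correct and matches the paper's approach: the paper simply labels this a ``useful corollary of Proposition~\ref{properties-Julia}(e)'' and gives no further argument, so you have supplied exactly the intended details, using (a) for the forward inclusion and (e) for the reverse.
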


To conclude the discussion of the basic properties of Julia sets, let us consider the simplest
examples of non-linear rational endomorphisms of the Riemann sphere, the quadratic polynomials.
Every affine conjugacy class of quadratic polynomials has a unique representative of the
form $f_c(z)=z^2+c$, the family
$$f_c(z)=z^2+c,\;c\in\CC$$
is often referred to as {\it the quadratic family}.
For a quadratic map the structure of the Julia set is governed by the behavior of the orbit of the only
finite critical point $0$. In particular, the following dichotomy holds:

\begin{prop}
\label{quadratic-Julia}
Let $K=K(f_c)$ denote the filled Julia set of $f_c$, and $J=J(f_c)=\partial K$. Then:
\begin{itemize}
\item $0\in K$ implies that $K$ is a connected, compact subset of the plane with connected complement;
\item $0\notin K$ implies that $K=J$ is a planar Cantor set.
\end{itemize}
\end{prop}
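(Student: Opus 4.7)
The plan is to use the nested preimages $V_n = f_c^{-n}(D_R)$ of a large disk $D_R = \{|z| < R\}$, together with the Riemann-Hurwitz observation that the preimage of a topological disk under the degree-two map $f_c$ is either one disk (if the critical value $c = f_c(0)$ lies inside) or two disjoint disks (if it does not), since the only branch point of $f_c$ is $0$ with $f_c(0) = c$. First I would fix $R > 0$ with $|f_c(z)| > 2|z|$ for $|z| > R$; then $K \subset D_R$, $f_c^{-1}(D_R) \Subset D_R$, and $K = \bigcap_n V_n$.

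\textbf{Case 1} ($0 \in K$). Forward invariance of $K$ gives $c = f_c(0) \in K \subset V_n$ for every $n$, so by Riemann-Hurwitz each $V_n$ is a topological disk containing $0$. Then $K$ is a nested intersection of compact disks, hence connected. Its complement in $\riem$ equals $\{\infty\} \cup \bigcup_n (\riem \setminus \overline{V_n})$, an increasing union of topological disks, so it is simply connected and in particular connected; removing $\infty$ still leaves $\CC \setminus K$ connected, as required. Equivalently, the Böttcher coordinate near $\infty$ extends to a conformal isomorphism $A(\infty) \to \{|w| > 1\}$ precisely because the only finite critical point, $0$, does not lie in $A(\infty)$, and the same conclusion falls out.

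\textbf{Case 2} ($0 \notin K$). Then $c \notin K$ as well, so there is a smallest $N$ with $c \notin V_N$. By Riemann-Hurwitz, $V_{N+1}$ is a disjoint union of two topological disks, neither containing $0$; iterating (no subsequent $V_{N+k}$ meets $c$), each $V_{N+k}$ is a disjoint union of $2^k$ topological disks. The heart of the argument is to show that the diameters of the components of $V_{N+k}$ tend to $0$ uniformly: each component $W$ of $V_{N+k}$ is mapped conformally by $f_c^k$ onto one of the two disks $U$ comprising $V_{N+1}$, and $\overline{U} \Subset V_N$; applying the Schwarz-Pick lemma to the inverse $U \to W$ viewed with respect to the hyperbolic metric of $V_N$ gives a uniform contraction factor, and since the components remain in a compact subset of $V_N$, the Euclidean diameters shrink geometrically. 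Therefore $K$ is compact, totally disconnected, and has empty interior, so $K = \partial K = J$; the no-isolated-points property from Proposition~\ref{properties-Julia}(c) then upgrades $K$ from totally disconnected compact to a planar Cantor set.

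The main obstacle is the uniform shrinking of components in Case 2, which genuinely requires a complex-analytic contraction argument (Schwarz-Pick applied to the inverse branches of $f_c^k$, or equivalently the Koebe distortion estimate) to rule out the possibility that the nested components could shrink to a nontrivial continuum; everything else is purely topological bookkeeping via Riemann-Hurwitz.
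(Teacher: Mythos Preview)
The paper does not prove this proposition; it is stated as a classical fact in the expository Section~2.1, with Milnor's book \cite{Mil} as the general reference. Your proposal is exactly the standard textbook argument (nested preimages of a large disk, Riemann--Hurwitz to count components, Schwarz--Pick contraction of inverse branches in the escaping case), and it is correct. One small indexing slip: a component $W$ of $V_{N+k}$ is mapped conformally onto a component of $V_{N+1}$ by $f_c^{\,k-1}$, not $f_c^{\,k}$; this does not affect the argument.
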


\noindent
The {\it Mandelbrot set} $\cM\subset \CC$ is defined as the set of parameter values $c$ for which 
$J(f_c)$ is connected.

\ignore{
\noindent
A rational mapping $R:\hat\CC\to\hat\CC$ is called {\it hyperbolic} if the orbit of every critical point of $R$
is either periodic, or  converges to an
(super-)attracting cycle. The term ``hyperbolic'' has an established meaning in dynamics. Its use in this
context is justified by the following proposition:

\begin{prop}
A rational mapping $R$ of degree $d\geq 2$  is hyperbolic if and only if 
there exists a smooth metric $\mu$ defined on an open neighborhood of $J(R)$ and constants $C>0$, $\lambda>1$
such that
$$||DR^n(z)||_\mu>C\lambda^n\text{ for every }z\in J(R), n\in\NN.$$  
\end{prop}

\noindent
As easily follows from Implicit Function Theorem and considerations of local dynamics of an attracting orbit,
hyperbolicity is an open property in the parameter space of rational mappings of degree $d\geq 2$.

\noindent
Considered as a rational mapping of the Riemann sphere, a quadratic polynomial $f_c(z)$ has two critical points:
the origin, and the super-attracting fixed point at $\infty$. In the case when $c\notin \cM$, the orbit of the
former converges to the latter, and thus $f_c$ is hyperbolic. 
The following conjecture is central to the field of dynamics in one complex variable:

\medskip
\noindent
{\bf Conjecture (Density of Hyperbolicity in the Quadratic Family).} {\it Hyperbolic parameters are dense  
in $\CC$.}

}
\subsection{Brolin-Lyubich measure on the Julia set}

\begin{defn}
\label{defn-balanced}
Consider a rational map $R:\riem\to \riem$ of degree $d\geq 2$. We say that a probability measure $\mu$ on $\riem$ is {\it balanced}
(with respect to $R$) if for every set $X\subset \riem$ on which $R$ is injective we have
$$\mu(R(X))=d\cdot\mu(X),$$
that is, the Jacobian of $\mu$ is equal to $d$.
\end{defn}
We see that a balanced measure $\mu$ is necessarily invariant: as most points in $\riem$ have $d$ preimages under $R$,
$$\mu (R^{-1}(X))=\mu(X).$$
However, a rational map has many invariant probability measures (as a simplistic example, for a periodic orbit $z_0\mapsto z_1\mapsto\cdots z_{p-1}\mapsto z_0$
define $\mu=\frac{1}{p}\sum \delta_{z_i}$). On the other hand there is exactly one balanced measure for $R$: the Brolin-Lyubich measure
$\lambda$. Constructed by Brolin \cite{Brolin} for polynomials, and later by Lyubich \cite{Lyubich} for a general rational function, it is supported on the
Julia set $J(R)$. Lyubich showed that for all but finitely many points $z\in\riem$ the weak limit
\begin{equation}
\label{convergence-meas}
\lim_{n\to\infty} \frac{1}{d^n}\sum_{w\in R^{-n}(z)}\delta_w = \lambda.
\end{equation}

In general, given a transformation $T$ of a compact space $X$, denote by $h_\text{top}(T)$ and $h_{\mu}(T)$ the topological and measure-theoretic 
entropies, respectively. The well known \emph{Variational Principle}, tell us that: $$h_\text{top}(T)=\sup_{\mu \in \M_{T}}h_{\mu}(T),$$ where $\M_{T}$ denotes the set of $T$-invariant measures. A measure $\mu$ is called  {\it a measure of maximal entropy} if $h_{\mu}(T)=h_\text{top}(T)$. 
 
Lyubich showed that $\lambda$ is the unique {\it measure of maximal entropy} of $R$:
\begin{thm}[\cite{Lyubich}]
The measure $\lambda$ is the unique measure on $\riem$ for which the metric entropy $h_\lambda(R)$ coincides with the topological entropy of $R$:
$$h_\lambda(R)=h_{\text{top}}(R)=\log d.$$
\end{thm}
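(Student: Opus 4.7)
The plan is to deduce the theorem from a sharper general bound: for \emph{every} invariant Borel probability measure $\mu$ on $\riem$, one has $h_\mu(R) \leq \log d$, with equality if and only if $\mu$ is balanced (in the sense of Definition \ref{defn-balanced}), and hence if and only if $\mu = \lambda$.

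The key technical device will be Rohlin's entropy formula. I would first choose a measurable partition $\xi = \{A_1,\dots,A_d\}$ of $\riem$ into a set of domains on which $R$ is injective, taken modulo the (finite, hence $\mu$-null for any non-atomic $\mu$) critical set. Let $J_\mu$ denote the Jacobian of $R$ with respect to $\mu$ on each $A_i$. The invariance $R_*\mu = \mu$, together with the change-of-variables formula along the local inverse branches, yields the key preimage identity
$$\sum_{w \in R^{-1}(z)} \frac{1}{J_\mu(w)} = 1 \qquad \text{for $\mu$-a.e. } z.$$
Rohlin's formula then expresses $h_\mu(R)$ as the $\mu$-average of the Shannon entropy of the probability distribution $\bigl(1/J_\mu(w)\bigr)_{w \in R^{-1}(z)}$ on the fiber over $z$, namely
$$h_\mu(R) = \int \sum_{w \in R^{-1}(z)} \frac{1}{J_\mu(w)} \log J_\mu(w)\, d\mu(z).$$

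Since the Shannon entropy of a probability distribution on at most $d$ points is bounded by $\log d$, with equality exactly when the distribution is uniform, one immediately concludes $h_\mu(R) \leq \log d$, with equality if and only if $J_\mu(w) = d$ for $\mu$-a.e.\ $w$, i.e., exactly when $\mu$ is balanced. Uniqueness of the balanced measure is then a direct consequence of the Brolin--Lyubich equidistribution \eqref{convergence-meas}: any balanced measure must coincide with the weak limit on the right-hand side, and therefore equal $\lambda$. Thus $h_\lambda(R) = \log d$, and the variational principle yields $h_{\text{top}}(R) \geq \log d$; the matching upper bound $h_{\text{top}}(R) \leq \log d$ is immediate from the variational principle combined with the measure-wise bound just established, so we recover $h_{\text{top}}(R) = \log d$ as a byproduct.

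The main obstacle will be the technical justification of Rohlin's formula in this non-smooth setting. The Jacobian $J_\mu$ is only defined $\mu$-a.e.\ and need not have any regularity, and one must check both that the partition $\xi$ is generating in the sense that $\bigvee_{n \geq 0} R^{-n}\xi$ exhausts the Borel $\sigma$-algebra $\mu$-mod zero, and that the preimage identity holds pointwise a.e.\ rather than merely after integration. These are standard but delicate points; they can be handled by working on the complement of the grand orbit of the critical set, which is a countable union of finite sets and therefore has $\mu$-measure zero for any non-atomic invariant $\mu$ --- and any atomic invariant measure trivially has entropy zero, well below $\log d$.
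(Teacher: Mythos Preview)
The paper does not prove this theorem; it is cited from \cite{Lyubich}, and the only argument offered is the one-line remark after the statement that $\int \text{Jac}_\mu R\,d\mu\leq d$ forces a measure of maximal entropy to be balanced. Your proposal is exactly a fleshed-out version of this standard Rohlin-formula-plus-Jensen argument (which is also Lyubich's original proof), so there is nothing further to compare.
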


Note that for any invariant measure $\mu$ we have
$$\int \text{Jac}_\mu fd\mu\leq d,$$
therefore a measure of maximal entropy is necessarily balanced.

\subsection{Harmonic measure in polynomial dynamics}
\label{sec:23}
A detailed discussion of harmonic measure can be found in \cite{Garnett-Marshall}. Here we briefly recall some of the relevant facts.

Let $G$ be a simply-connected domain  in $\riem$ whose complement $K$ contains at least two points, and $g\in G$. The {\it harmonic measure}
$\omega_{G,g}$ is defined on the boundary $\partial G$. For a set $E\subset \partial G$ it is equal to the probability that
a Brownian path originating at $g$ will first hit $\partial G$ within the set $E$.

To define the harmonic measure for a non simply-connected domain $G\equiv \riem\setminus K$ we have to require that a Brownian path originating in $G$ 
will hit $\partial G$ almost surely, a condition which is satisfied automatically for a simply-connected domain.
A quantitative measure of a likelyhood that such a set will be hit by a Brownian path is defined as follows.
Consider $K\Subset \CC$, and let 
$B_t$ be a Brownian path which is started uniformly at a circle $\{|z|=R\}$ which surrounds $K$.
Denote $\tau$ the first moment when $B_\tau\in K$. The {\it logarithmic capacity} of $K$ is 
$$\text{Cap}(K)=\exp(\EE(\log |B_\tau|)).$$

By way of an example, consider a connected and locally-connected compact set $K\subset \CC$. In this case, $\partial G$ is a continuous image of the unit circle.
In fact, consider the unique conformal Riemann mapping 
$$\psi:G\equiv\riem\setminus K\to \riem\setminus\overline{D_R(0)}\equiv (D_R(0))^c,\text{ with }\psi(\infty)=\infty\text{ and }\psi'(\infty)=1.$$
The quantity $r(G,\infty)\equiv 1/R$ is the {\it conformal radius} of $G$ about $\infty$. 

By a classical theorem of Carath{\'e}odory, $\psi^{-1}$ extends continuously to map $\bar G\to \riem\setminus D_R(0)$. By symmetry considerations,
the harmonic measure $\omega_{(D_R(0))^c,\infty}$ coincides with the Lebesgue measure $\mu$ on the  circle $\partial D_R(0)=\{|z|=R\}$. Conformal invariance of Brownian motion
implies that $\omega_{G,\infty}$ is obtained by pushing forward $\mu$ by $\psi^{-1}|_{\partial D_R(0)}$, and that
$$\text{Cap}(G)=1/r(G,\infty).$$

Consider a polynomial $P:\CC\to\CC$ with $\deg P\geq 2$. The capacity of the filled Julia set $K(P)$ is equal to one. This follows from a classical
result of B{\"o}ttcher when $K(P)$ is connected (see \cite{Brolin} for the general case). Brolin \cite{Brolin} was the first to show that
the balanced measure $\lambda$ of $P$ coincides with the harmonic measure $\omega_{\riem\setminus K(P),\infty}$. 
 \section{Computability}

\subsection{Algorithms and computable functions on integers}
The notion of an algorithm was formalized in the 30's,
independently by Post, Markov, Church, and, most famously, Turing. Each of them  
proposed a model of computation which determines a set of integer functions that can be \emph{computed} by some mechanical or algorithmic procedure. Later on, all these models were shown to be equivalent, so that they define the same class of integer
functions, which are now called \emph{computable (or recursive) functions}.  
It is standard in Computer Science to formalize
an algorithm as a {\it Turing Machine} \cite{Tur}. We will not define it here, and instead will refer an interested
reader to any standard introductory textbook in the subject. It is more intuitively familiar, and provably equivalent,
to think of an algorithm as a program written in any standard programming language.

In any programming language there is only a countable number of possible algorithms. Fixing the language, we can
enumerate them all (for instance, lexicographically). Given such an ordered list $(\cA_n)_{n=1}^\infty$ of all
algorithms, the index $n$ is usually called the \emph{G\"odel number} of the algorithm $\cA_{n}$.

We will call a  function $f:\NN\to\NN$  \emph{computable} (or {\it recursive}), if there exists an algorithm $\cA$ which, upon input $n$, outputs $f(n)$. Computable functions of several integer variables are defined in the same way. 

A function $f:W\to\NN$, which is defined on a subset $W\subset \NN$, is called {\it partial recursive} if  there exists an
algorithm $\cA$ which outputs $f(n)$ on input $n\in W$, and runs forever if the input $n\notin W$.

\subsection{Time complexity of a problem.} For an algorithm $\cA$ with input $w$ the {\it running time}  is the number
of steps $\cA$ makes before terminating with an output. 
The {\it size} of an input $w$ is the number of dyadic bits required to specify $w$. Thus for $w\in\NN$, the size of $w$ is
the integer part of $\log_2 w$. 
The {\it running time of $\cA$} is
the function 
$$T_\cA :\NN \to\NN$$
 such that
$$T_\cA(n) = \max\{\text{the running time of }\cA(w)\text{ for inputs } w \text{ of size }n\}.$$
In other words, $T_\cA(n)$ is the worst case running
time for inputs of size $n$.
For a computable function $f : \NN\to \{0,1\}$  the time complexity of $f$ is said to have an
upper bound $T(n)$ if there exists an algorithm $\cA$ with running time bounded
by $T(n)$ that computes $f$.

\subsection{Computable and semi-computable sets of naturals numbers}
A set $E\subseteq \NN$ is said to be \defin{computable} if its characteristic function 
$\chi_E:\NN\to\{0,1\}$ is computable. That is, if there is an algorithm $\cA:\NN \to \{0,1\}$ that, upon input $n$,  halts and outputs $1$ if $n\in E$ or $0$ if $n\notin E$. Such an algorithm allows to \emph{decide} whether or not a number $n$ is an element of $E$. Computable sets are also called \defin{recursive} or \defin{decidable}. 

Since there are only countably many algorithms, there exist only countably many computable subsets of $\NN$. A well known ``explicit'' example of a non computable set is given by the \emph{Halting set} 
$$H:=\{i \text{ such that }\cA_{i}\text{ halts}\}.$$
 Turing \cite{Tur} has shown that  there is no algorithmic procedure to decide, for any $i\in\NN$, whether or not the algorithm with G\"odel number $i$, $\cA_{i}$, will eventually halt. 
 
  On the other hand, it is easy to describe an algorithmic procedure which, on input $i$, will halt if $i\in H$, and will run forever if $i\notin H$. Such a procedure can informally be described as follows: {\sl on input $i$ emulate the algorithm $\cA_i$; if $\cA_i$ halts then  halt.}
  
  In general, we will say that a set $E\subset \NN$ is 
  \defin{lower-computable} (or 
  \defin{semi-decidable}, or \defin{recursively enumerable}) if there exists an algorithm $\cA_E$
  which on an input $n$  halts if $n\in E$, and never halts otherwise. Thus, the algorithm $\cA_E$ 
  can verify the inclusion $n\in E$, but not the inclusion $n\in E^c$.
We say that $\cA_E$ {\it semi-decides} $n\in E$ (or semi-decides $E$).
 The complement of a lower-computable set is called \defin{upper-computable}. 
 
 The following is an easy excercise:
 \begin{prop}
 A set is computable if and only if it is simultaneously upper- and lower-computable.
 \end{prop}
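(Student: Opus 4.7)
The plan is to prove both directions constructively, since this is a textbook equivalence in elementary recursion theory.

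For the forward implication, I would start with the algorithm $\cA$ that computes $\chi_E$ and use it to manufacture semi-deciders for both $E$ and $E^c$. Specifically, to semi-decide $E$, on input $n$ run $\cA(n)$, halt if the output is $1$, and otherwise enter an infinite loop; this clearly halts precisely on the elements of $E$, so $E$ is lower-computable. The symmetric modification (halting when $\cA(n)$ outputs $0$) semi-decides $E^c$, establishing upper-computability of $E$. This direction is essentially immediate.

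The substance of the proposition lies in the converse. Suppose $\cA_E$ semi-decides $E$ and $\cA_{E^c}$ semi-decides $E^c$. To decide membership of an arbitrary $n\in\NN$, I would run the two semi-deciders \emph{in parallel} on input $n$ by dovetailing: simulate one computational step of $\cA_E(n)$, then one step of $\cA_{E^c}(n)$, then another step of each, and so on. Since $n$ lies in exactly one of $E$ or $E^c$, exactly one of the two simulations eventually halts. The deciding algorithm outputs $1$ if $\cA_E(n)$ halts first and $0$ if $\cA_{E^c}(n)$ halts first, thereby computing $\chi_E(n)$ in finite time for every input.

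No genuine obstacle is expected; the only nontrivial ingredient is the parallel simulation, which is the standard dovetailing construction and is available in any reasonable model of computation (in the Turing machine formalism it is realized by a universal simulator that interleaves the execution steps of two machines whose descriptions are hard-coded into it). Once dovetailing is taken for granted, both implications collapse to a few lines.
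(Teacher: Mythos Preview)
Your proof is correct and is precisely the standard argument; the paper does not actually supply a proof of this proposition, labeling it ``an easy excercise,'' so there is nothing to compare against beyond noting that the dovetailing construction you describe is exactly what is intended.
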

 
\subsection{Computability over the reals}  Strictly speaking, algorithms only work on natural numbers, but this can be easily extended to the objects  of any countable set once a bijection with integers has been established.  The operative power of an algorithm on the objects of such a numbered set obviously depends on what can be algorithmically recovered from their numbers.  For example,  the set $\mathbb{Q}$ of rational numbers can be injectively
numbered $\mathbb{Q}=\{q_0,q_1,\ldots\}$ in an \emph{effective} way: the
number $i$ of a rational $a/b$ can be computed from $a$ and $b$, and vice
versa. The abilities of algorithms on integers are then transferred to the rationals. For instance, algorithms can perform algebraic operations and decide whether or not $q_{i}>q_{j}$ (in the sense that the set $\{(i,j):q_{i}>q_{j}\}$ is decidable). 

Extending algorithmic notions to functions of real numbers was pioneered by Banach and Mazur \cite{BM,Maz}, and
is now known under the name of {\it Computable Analysis}. Let us begin by giving the definition of a computable real
number, going back to the seminal paper of Turing \cite{Tur}.

\begin{definition}A real number $x$ is called

\begin{itemize}
\item \defin{computable} if there is a computable function $f:\NN \to \QQ$ such that $$|f(n)-x|<2^{-n};$$ 
\item \defin{lower-computable} if there is a computable function $f:\NN \to \QQ$ such that 
$$f(n)\nearrow x;$$ 
\item \defin{upper-computable} if there is a computable function $f:\NN \to \QQ$ such that 
$$f(n)\searrow x.$$ 
\end{itemize}
\end{definition}

Algebraic numbers or  the familiar constants such as $\pi$, $e$, or the Feigembaum constant are all computable. However, the set of all computable numbers $\RR_C$ is necessarily
countable, as there are only countably many computable functions.

We also remark that if $x$ is lower-computable then there is an algorithm to semi-decide the set $\{q_{i}<x\}$: just compute $f(n)$ for each $n$  and halt if $q_{i}<f(n)$.  In other words, the set $\{q \in \QQ:q<x\}$ is lower-computable. The converse is also obviously true: 

\begin{prop}
If $E\subset \QQ$ is lower-computable and $x=\sup E<\infty$, 
then  $x$ is lower-computable.

\end{prop}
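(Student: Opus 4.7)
The plan is to combine the semi-decidability of $E$ with the definition of supremum by constructing a computable non-decreasing sequence of rationals that converges to $x$.

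First, I would use the standard dovetailing trick to produce a computable enumeration of $E$. Since $E \subset \QQ$ is lower-computable, there is an algorithm $\cA_E$ which, on input $q_i \in \QQ$ (using the effective enumeration $\QQ = \{q_0, q_1, \ldots\}$), halts iff $q_i \in E$. Run $\cA_E$ in parallel on all inputs as follows: on stage $m$, simulate $\cA_E$ for $m$ steps on each of $q_0, q_1, \ldots, q_m$. Whenever a simulation halts, output the corresponding rational. Since $x < \infty$ implies $E$ is non-empty below $x$ (if $E$ is empty there is nothing to prove since $\sup E = -\infty$ contradicts the hypothesis $x \in \RR$), this produces an infinite computable list $q_{i_1}, q_{i_2}, q_{i_3}, \ldots$ whose set of values is exactly $E$.

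Next, define the computable function $f:\NN \to \QQ$ by
\[
f(n) = \max\bigl(q_{i_1}, q_{i_2}, \ldots, q_{i_n}\bigr).
\]
This is computable since each $q_{i_k}$ is produced algorithmically and the maximum of finitely many rationals is computable. By construction $f$ is non-decreasing, and since each $q_{i_k} \in E$ we have $f(n) \le x$ for all $n$.

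It remains to show $f(n) \to x$. Fix any rational $q < x$. By definition of the supremum, there exists some $q' \in E$ with $q < q' \le x$. This $q'$ equals $q_{i_k}$ for some $k$, so for every $n \ge k$ we have $f(n) \ge q_{i_k} = q' > q$. Since rationals are dense in $\RR$, this proves $f(n) \nearrow x$, establishing that $x$ is lower-computable. No step here is a real obstacle; the only subtle point is that semi-decidability of membership in $E$ does not immediately give an enumeration with no ``stalling,'' which is why dovetailing (rather than running $\cA_E$ serially on each candidate) is essential.
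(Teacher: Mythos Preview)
The paper does not actually prove this proposition; it simply introduces it with the phrase ``the converse is also obviously true'' and moves on. Your argument supplies the standard details the paper omits, and the strategy---dovetail to enumerate $E$, then take running maxima---is correct.

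One small technical wrinkle: your assertion that dovetailing yields an \emph{infinite} list $q_{i_1}, q_{i_2}, \ldots$ assumes $E$ is infinite. If $E$ happens to be a finite non-empty set with $m$ elements, the enumeration halts after $m$ outputs and your definition $f(n) = \max(q_{i_1},\ldots,q_{i_n})$ is undefined for $n>m$. The patch is routine: rather than taking the max of the first $n$ outputs, let $f(n)$ be the max of all outputs produced by stage $n$ of the dovetailing (after first running long enough to see at least one output, which terminates since $E\neq\emptyset$). This makes $f$ total and non-decreasing regardless of the cardinality of $E$, and the rest of your argument goes through unchanged.
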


In the same way as there exist lower-computable sets which are not computable, there exists lower-computable numbers which are not computable. The usual construction is as follows: let $(a_{i})_{i}$ be an algorithmic enumeration (without repetitions) of a lower-computable set $A$ which is not computable. For instance, we can take
$$A=\{i\in\NN\text{ such that } \cA_i\text{ halts}\}.$$
Define 

$$
q_{n}=\sum_{i=0}^n 2^{-a_{i}-1}.
$$

Clearly, $(q_{n})_{n}$ is a computable non-decreasing sequence of rational numbers. Being bounded by 1, it converges. The limit, say $x$, is then a lower-computable number. It $x$ were computable, it would be possible to compute the binary expansion of $x$ which, in turn, would allow to decide the set $A$. 

We also note:
\begin{prop}
A real number $x$ is computable if and only if it is simultaneously lower- and upper-computable.
\end{prop}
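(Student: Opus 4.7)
The plan is to prove the two directions separately, both by routine manipulations of the defining algorithms.

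For the forward direction, suppose $x$ is computable via a computable $f:\NN\to\QQ$ with $|f(n)-x|<2^{-n}$. I would define two auxiliary computable functions
$$g(n)=f(n)-2^{-n},\qquad h(n)=f(n)+2^{-n},$$
which satisfy $g(n)<x<h(n)$ and $g(n),h(n)\to x$, but are not necessarily monotone. To force monotonicity, set
$$\tilde g(n)=\max_{0\le k\le n} g(k),\qquad \tilde h(n)=\min_{0\le k\le n} h(k).$$
Both are computable (a finite max/min of rationals is computable), non-decreasing and non-increasing respectively, still bounded by $x$ from the correct side, and still converge to $x$ since the original $g(n),h(n)$ do. Hence $\tilde g(n)\nearrow x$ and $\tilde h(n)\searrow x$, showing $x$ is lower- and upper-computable.

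For the backward direction, suppose $x$ is simultaneously lower- and upper-computable, witnessed by $f(n)\nearrow x$ and $h(n)\searrow x$. Given a precision parameter $n$, I would describe an algorithm which computes $f(k)$ and $h(k)$ for $k=0,1,2,\ldots$ in turn, and halts as soon as it finds the first $k$ with $h(k)-f(k)<2^{-n}$, outputting $f(k)$. Such a $k$ must exist: since $f(k)\to x$ and $h(k)\to x$, for all large enough $k$ we have $h(k)-f(k)<2^{-n}$. Because $f(k)\le x\le h(k)$, the output satisfies $|f(k)-x|\le h(k)-f(k)<2^{-n}$, so the algorithm witnesses computability of $x$.

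Neither direction poses a real obstacle; the only subtle point is the monotonization step in the forward direction, which is needed because the definitions of lower- and upper-computability require monotone approximating sequences rather than merely one-sided bounds. Everything else is bookkeeping on the rational approximations produced by the given algorithms.
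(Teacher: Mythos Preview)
Your proof is correct and follows essentially the same approach as the paper. The backward direction matches the paper's argument almost verbatim (search for the first $k$ with $b_k-a_k<2^{-n}$ and output $a_k$); for the forward direction the paper simply says ``the other direction is trivial,'' whereas you spell out the monotonization step explicitly, which is a welcome addition given that the paper's definitions of lower- and upper-computability do require monotone approximating sequences.
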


\begin{proof}
Let us assume that $x$ is both lower- and upper-computable. Thus there exist algorithms $\cA_1$ and $\cA_2$
which compute sequences of rationals $a_j$ and $b_j$ respectively with
$$a_j\nearrow x\text{ and }b_j\searrow x.$$
Consider the algorithm $\cA$ which on the input $n$ emulates $\cA_1$, $\cA_2$ to find the first $k(n)$ such
that $|a_{k(n)}-b_{k(n)}|<2^{-n}$, and then outputs $a_{k(n)}$. Then $f(n)=a_{k(n)}$ is a computable function such that
$|f(n)-x|<2^{-n}$ and hence $x\in\RR_C$.

The other direction is trivial.
\end{proof}

\subsection{Uniform computability} In this paper we will use algorithms to define \emph{computability} notions on more general objects. Depending on the context, these objects will take particulars names (computable, lower-computable, etc...) but the definition will always follow the scheme:

\medskip
\noindent
\emph{an object $x$ is \emph{computable} if there exists an algorithm $\cA$
satisfying the \emph{property} P($\cA,x$)}.

\medskip\noindent
For example, a real number $x$ is \emph{computable} if there
exists an algorithm $\cA$ which computes a function $f:\NN\to \QQ$ satisfying $|f(n)-x|<2^{-n}$ for all $n$.
 Each time such definition is made, a \emph{uniform version} will be implicitly defined:
 
\medskip
\noindent
\emph{the objects $\{x_\gamma\}_{\gamma\in\Gamma}$ are computable \defin{uniformly on
a countable set $\Gamma$} if there exists an algorithm $\cA$ with an input $\gamma\in\Gamma$, such that for all
$\gamma\in \Gamma$, $\cA_\gamma:=\cA(\gamma,\cdot)$ satisfies the \emph{property} P($\cA_\gamma,x_\gamma$)}.

\medskip
\noindent
 In our example, a sequence of reals $(x_i)_i$ is \emph{computable uniformly in $i$} if there exists
$\cA$ with two natural inputs $i$ and $n$ which computes a function $f(i,n):\NN\times\NN\to\QQ$ such that for all $i\in \NN$, the values of the function $f_{i}(\cdot):=f(i,\cdot)$ satisfy

$$|f_{i}(n)-x_{i}|<2^{-n}\text{ for all } n \in\NN.$$

\subsection{Computable metric spaces}The above definitions equip the real numbers with a computability structure. This can be extended to virtually any separable metric space, making them \emph{computable metric spaces}. We now give a short introduction. For more details, see \cite{Wei}.

\begin{definition}
A \defin{computable metric space} is a triple $(X,d,\S)$ where:
\begin{enumerate}
\item $(X,d)$ is a separable metric space,
\item $\S=\{s_i:i\in\N\}$ is a dense sequence of points in $X$,
\item $d(s_i,s_j)$ are computable real numbers, uniformly in $(i,j)$.
\end{enumerate}
\end{definition}

The  points in $\S$ are called  \defin{ideal}. 
\begin{example}
A basic example is to take the space $X=\RR^n$ with the usual notion of Euclidean distance $d(\cdot,\cdot)$, and 
to let the set $\cS$ consist of points $\bar x=(x_1,\ldots,x_n)$ with rational coordinates. In what follows,
we will implicitly make these choices of $\cS$ and $d(\cdot,\cdot)$ when discussing computability in $\RR^n$.
\end{example}

\begin{definition}A point $x$ is \defin{computable} if there is a computable function
$f:\NN\to\NN$  such that $$|s_{f(n)}-x|<2^{-n}\text{ for all }n.$$
\end{definition}

If $x\in X$ and $r>0$, the metric ball $B(x,r)$ is defined as 
$$B(x,r)=\{y\in X:d(x,y)<r\}.$$
Since the set $\cB:=\{B(s,q):s\in\S,q\in\Q, q>0\}$ of \defin{ideal balls} is countable, we can fix an enumeration
$\B=\{B_i:i\in\N\}$. 

\begin{proposition}\label{p.comp.semidec}
A point $x$ is computable if and only if the relation $x\in B_{i}$ is semi-decidable, uniformly in $i$.
\end{proposition}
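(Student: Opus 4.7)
The plan is to prove both implications directly from the definitions, using density of $\S$ and computability of distances between ideal points.

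For the forward direction, I assume $x$ is computable via $f:\NN\to\NN$ with $d(s_{f(n)},x)<2^{-n}$. Given an ideal ball $B_i=B(s_j,q)$, I want to semi-decide whether $d(x,s_j)<q$. The triangle inequality gives the two-sided bound
\[
d(s_{f(n)},s_j)-2^{-n}<d(x,s_j)<d(s_{f(n)},s_j)+2^{-n},
\]
so $d(s_{f(n)},s_j)+2^{-n}<q$ \emph{certifies} $d(x,s_j)<q$, and conversely such a certificate must appear for some $n$ once $2\cdot 2^{-n}$ is smaller than the gap $q-d(x,s_j)$. The quantities $d(s_{f(n)},s_j)$ are themselves only computable reals, but by computability of the distances on $\S$ (uniformly in the pair of indices) one can replace them with rational upper approximations $r_{n,m}>d(s_{f(n)},s_j)-2^{-m}$ and search by dovetailing over $(n,m)$ for the rational inequality $r_{n,m}+2^{-m}+2^{-n}<q$. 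This gives the desired semi-decision procedure, uniform in $i$ because the enumeration of $\B$ is effective.

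For the converse, suppose I have a semi-decision procedure $\cA(i,x)$ for ``$x\in B_i$'', uniform in $i$. To compute $x$ to precision $2^{-n}$, I effectively list all indices $i$ for which $B_i=B(s_j,q)$ with $q\leq 2^{-n}$, and start running $\cA(i,x)$ on all of them in parallel (dovetailing). Since $\S$ is dense there exists $s_j$ with $d(s_j,x)<2^{-n}$, hence $x\in B(s_j,2^{-n})$, so at least one thread will eventually halt. I output the center $s_j$ of the first ball whose semi-decision halts; by construction $d(s_j,x)<2^{-n}$, yielding an algorithm witnessing $x\in\RR_C$-style computability in $(X,d,\S)$.

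The only delicate point is the forward direction, where one must carefully pass from the computable (but not necessarily rational) distances $d(s_{f(n)},s_j)$ to a rational inequality that the algorithm can actually verify; once the dovetailed search over $(n,m)$ is set up this is routine. Both directions are uniform in the data (the index $i$ in one direction, the precision $n$ in the other), which is exactly what is needed to match the definitions given earlier in the section.
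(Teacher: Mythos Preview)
Your proof is correct and follows essentially the same approach as the paper's: the forward direction searches for an $n$ with $d(s_{f(n)},s_j)+2^{-n}<q$, and the converse direction dovetails the semi-decision over all sufficiently small ideal balls and outputs the center of the first one found. You are in fact slightly more careful than the paper in the forward direction, making explicit the extra dovetail over rational approximations $r_{n,m}$ to the computable-but-not-rational distances $d(s_{f(n)},s_j)$; the paper leaves this step implicit.
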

\begin{proof}
Assume first that $x$ is computable. We have to show that there is an algorithm $\cA$ which inputs a natural number $i$ and halts
 if and only if $x\in B_{i}$. Since $x$ is computable, for any $n$ we can produce an ideal point $s_{n}$ satisfying $|s_{n}-x|<2^{-n}$. 
The algorithm $\cA$ work as follows: upon input $i$, it computes the center and radius of $B_{i}$, say $s$ and $r$. It then
searches for $n\in\NN$ such that
$$d(s_{n},s)+2^{-n}<r.$$ 
Evidently, the above inequality will hold for some $n$ if and only if $x\in B_i$.

Conversely, assume that the relation  $x\in B_{i}$, s semi-decidable uniformly in $i$. 
To produce an ideal point $s_{n}$ satisfying $|s_{n}-x|<2^{-n}$, we only need to enumerate all ideal balls of radius $2^{-{n+1}}$ until one containing 
$x$ is found. We can take $s_{n}$ to be the center of this ball. 
\end{proof}

\begin{definition}
 An open set $U$ is called \defin{lower-computable} if there is a computable function $f:\NN\to\NN$
 such that $$U=\bigcup_{n\in \NN}B_{f(n)}.$$ 
\end{definition}

\begin{example} Let $\eps>0$ be a lower-computable real. Then the ball $B(0,\eps)$ is a lower-computable open set. 
Indeed: $B(s_0,\eps)=\bigcup_{n}B(0,q_{n})$, where $(q_{n})_{n}$ is the computable sequence converging to $\eps$ from below. 
\end{example}

It is not difficult to see that finite intersections or infinite unions of (uniformly) lower-computable open sets are again lower computable. 
As in Proposition (\ref{p.comp.semidec}), one can show that the relation $x\in U$ is semi-decidable for a computable point $x$ and 
an open lower-computable set.

We will now introduce computable functions. Let $X'$ be another computable metric space with idea balls $\cB'=\{B_i'\}.$

\begin{definition}A function $f:X\to X'$ is \defin{computable} if the sets $f^{-1}(B'_i)$ are lower-computable open, uniformly in $i$. 
\end{definition}

An immediate corollary of the definition is:

\begin{prop}
Every computable function is continuous.
\end{prop}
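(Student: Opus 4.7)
The plan is to observe that the definition of computability of a function is strictly stronger than requiring that preimages of basic open sets are open, which is the standard topological criterion for continuity. So all we need is the topological half of the definition, ignoring the effectivity.

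First, I would note that any lower-computable open set is, by definition, a union of ideal balls $B_{f(n)}$, and hence is open as a subset of $X'$. In particular, if $f\colon X\to X'$ is computable then for each index $i$ the preimage $f^{-1}(B_i')$ is open in $X$.

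Second, I would recall that the collection $\cB'=\{B_i'\}$ of ideal balls in $X'$ forms a basis for the metric topology. Indeed, $\S'$ is dense in $X'$ and the radii of ideal balls range over all positive rationals, so every open $V\subseteq X'$ can be written as $V=\bigcup_{i\in I}B_i'$ for some $I\subseteq\NN$. Then
\[
f^{-1}(V)=\bigcup_{i\in I}f^{-1}(B_i'),
\]
which is a union of open sets (by the first step) and hence open in $X$. This verifies the standard $\eps$--$\delta$-free characterization of continuity, completing the proof. There is no real obstacle here; the statement is essentially an unwinding of definitions, and the only content is the observation that the basis $\cB'$ generates the whole topology of $X'$.
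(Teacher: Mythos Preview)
Your argument is correct and is precisely the unwinding of definitions the paper has in mind: the paper does not spell out a proof at all, merely labeling the proposition ``an immediate corollary of the definition,'' and your two observations (lower-computable open sets are open, and $\cB'$ is a basis for the topology of $X'$) are exactly what makes it immediate.
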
 

The above definition of a computable function is concise, yet not very transparent. To give its $\eps-\delta$ version, we need another concept.
We say that a function $\phi:\NN\to\NN$ is an {\it oracle} for $x\in X$ if
$$d(s_{\phi(m)},x)<2^{-m}.$$
An algorithm may {\it query} an oracle by reading the values of the function $\phi$ for an arbitrary $n\in\NN$.
We have the following:

\begin{prop}
A function $f:X\to X'$ is computable if and only if
there exists an algorithm $\cA$ with an oracle for $x\in X$ and an input $n\in\NN$ which outputs $s_n'\in \cS'$
such that $d(s_n',f(x))<2^{-n}.$
\end{prop}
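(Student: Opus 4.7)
The plan is to prove both implications by exploiting the fact that any halting computation of $\cA$ reads only finitely many oracle values, together with the approximation structures built into $X$ and $X'$.

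For the forward direction, assume $f$ is computable in the earlier sense. Given an oracle $\phi$ for $x$ and precision $n$, enumerate all ideal balls $B'_k = B(s'_{j_k}, r_k)$ in $X'$ with rational radius $r_k < 2^{-n}$; for each such $B'_k$, launch in parallel a semi-decision of $x \in f^{-1}(B'_k)$. That semi-decision is routine: writing $f^{-1}(B'_k)=\bigcup_\ell B(c_\ell, \rho_\ell)$ by hypothesis, wait for indices $\ell, m$ such that $d(s_{\phi(m)}, c_\ell) + 2^{-m} < \rho_\ell$. As soon as any $B'_k$ is certified, output its center $s'_{j_k}$. Termination follows from the density of $\cS'$, which yields some ideal ball of radius $< 2^{-n}$ around $f(x)$, and correctness is immediate: $d(s'_{j_k}, f(x)) < r_k < 2^{-n}$.

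For the reverse direction, fix $B'_i = B(s'_a, r)$ and exhibit $f^{-1}(B'_i)$ as a lower-computable open set uniformly in $i$. The key idea is the \emph{finite-use principle}. Enumerate all finite transcripts $\tau = ((m_1,y_1),\ldots,(m_\ell,y_\ell))$ of oracle queries/responses and all precisions $n$; for each pair $(\tau, n)$, simulate $\cA$ on input $n$ feeding it $\tau$ as the oracle prefix. Call $(\tau,n)$ \emph{admissible} if the simulation halts with some output $s'_b$ and if $d(s'_b, s'_a) + 2^{-n} < r$, so that $B(s'_b, 2^{-n}) \subset B'_i$. Set $U_\tau := \bigcap_{j=1}^\ell B(s_{y_j}, 2^{-m_j})$. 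If $x' \in U_\tau$, then $\tau$ is a legitimate initial segment of some oracle for $x'$, so $\cA$ would produce the same output $s'_b$, forcing $f(x') \in B(s'_b, 2^{-n}) \subset B'_i$; thus $U_\tau \subset f^{-1}(B'_i)$. Conversely, any $x \in f^{-1}(B'_i)$ lies in some such $U_\tau$: choose $n$ large enough that $2 \cdot 2^{-n} + d(f(x), s'_a) < r$, run $\cA$ on input $n$ with any valid oracle for $x$, and extract the finite transcript of the queries it actually makes. Each $U_\tau$, being a finite intersection of ideal balls, is uniformly lower-computable open (inclusion $B(s,q) \subset B(s_{y_j}, 2^{-m_j})$ reduces to $d(s, s_{y_j}) + q < 2^{-m_j}$, which is semi-decidable), and the union of the $U_\tau$ over admissible $(\tau,n)$ is the desired uniform lower-computable enumeration of $f^{-1}(B'_i)$.

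The hardest part is organizing the reverse direction: one must verify that the enumeration of admissible transcripts really covers all of $f^{-1}(B'_i)$ and that it does so uniformly in $i$. Completeness hinges on choosing $n$ large relative to $\dist(f(x), \partial B'_i)$, combined with the built-in guarantee that $\cA$'s output at precision $n$ is within $2^{-n}$ of $f(x)$; uniformity follows because the whole construction is parameterized by $i$ in an effective way (the indexing of $B'_i$, the scheme that simulates $\cA$, and the enumeration of ideal sub-balls of each $U_\tau$ are all effective in $i$ and $\tau$).
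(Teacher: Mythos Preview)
The paper states this proposition without proof, so there is nothing to compare against directly. Your argument follows the standard route in computable analysis (the finite-use principle for oracle machines) and is essentially correct; the forward direction is clean, and the architecture of the reverse direction is exactly right. There is, however, one small technical oversight in the reverse direction that you should patch.

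In the soundness step you claim that if $x'\in U_\tau$ then ``$\tau$ is a legitimate initial segment of some oracle for $x'$, so $\cA$ would produce the same output $s'_b$''. This requires $\tau$ to be \emph{consistent} as a partial function. Since you enumerate arbitrary finite sequences $((m_1,y_1),\ldots,(m_\ell,y_\ell))$, nothing prevents $\cA$ from querying the same index $m$ at two different steps and being fed distinct answers $y_j\neq y_{j'}$. For such a $\tau$ the set $U_\tau=\bigcap_j B(s_{y_j},2^{-m_j})$ may still be nonempty, yet $\tau$ extends to no function oracle at all; the correctness guarantee on $\cA$ therefore says nothing about the output $s'_b$, and you cannot conclude $U_\tau\subset f^{-1}(B'_i)$. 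The repair is trivial and does not disturb the rest of the proof: either enumerate finite partial functions $\sigma:\NN\to\NN$ and simulate $\cA$ using $\sigma$ as a partial oracle (aborting if a query falls outside $\operatorname{dom}\sigma$), or simply discard any transcript in which the same query receives two different answers---a decidable condition on $\tau$. With that restriction in place, soundness and completeness go through exactly as you wrote, and the uniformity in $i$ is clear.
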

In other words, given an arbitrarily good approximation of the input of $f$ it is possible to constructively 
approximate the value of $f$ with any desired precision.


\subsubsection{Computability of closed sets}
Having dfined lower-computable open sets, we naturally proceed to the following definition.
\begin{definition}\label{d.upper.comp}
A closed set $K$  is  \defin{upper-computable}  if its complement is lower-computable. \end{definition}

Let us look at two examples. Firstly,

\begin{example}
 A closed ideal ball $\cl(B(s,q))$ is clearly upper-computable. To see this, observe that  a  
point $s_{n}$ belongs to $X\setminus \cl(B(s,q))$ if and only if $d(s_{n},s)>q$. Since this last relation is semi-decidable, 
we can enumerate such ideal points. Moreover, for each of them we can also find $q_{n}$ satisfying 
$0<q_{n}<d(s_{n},s)-q$, so that $B(s_{n},q_{n})\subset X\setminus \cl(B(s,q))$. 

\end{example}

Our second example is more interesting:

\begin{example}
\label{example-fille-julia-set}
 Let  $P:\CC\to\CC$ be a computable polynomial of degree $\deg P\geq 2$. Then the filled Julia set $K(P)$ is upper-computable. 
\end{example}

\begin{proof}
Indeed, let $M\in\QQ$ be such that $K(P)\subset B(0,M)$.
 Enumerate the points in $\RR^2$ with rational coordinates $\cS=\{s_n=(a_n,b_n)\}$, and set $\zeta_n=a_n+ib_n$.
For every point $\zeta_n\in \CC\setminus K(P)$ we can identify an iterate $P^l(\zeta_n)\notin B(0,M)$. Moreover,
for such a point we can find $\eps_n\in \QQ$ such that $$P^l(B(\zeta_n,\eps_n))\cap B(0,M)=\emptyset.$$
We can thus algorithmically enumerate a sequence of open ideal balls which exhausts $\RR^2\setminus K(P)$.
\end{proof}

\begin{definition}
A closed set $K$ is \defin{lower-computable} if the relation $K\cap B_{i}\neq \emptyset$ is semi-decidable, uniformly in $i$. 
\end{definition}

\noindent
In other words, a closed set $K$ is lower-computable if there exists an algorithm $\cA$ which enumerates all ideal balls which have  non-empty intersection with $K$.

To see that this definition is a natural extension of lower computability of open sets, we note:
\begin{example}$\-$
\begin{enumerate}

\item  The closure of an ideal ball $\cl(B(s,q))$ is lower-computable. Indeed, $B(s_{i},q_{i})\cap \cl(B(s,q))\neq \emptyset $ if and only if $d(s,s_{n})<q+q_{n}$. 

\item More generally, the closure $\cl(U)$ of any open lower-computable set $U$ is lower-computable since $B_{i}\cap \cl(U)\neq \emptyset$ if and only if there exists $s\in  B_{i}\cap U$. 

\end{enumerate}
\end{example}

The following is  a useful characterization of lower-computable sets:
\begin{proposition}\label{p.comp-closed}
A closed set $K$ is lower-computable if and only if  there exists a sequence of uniformly computable  points $x_{i}\in K$ which is dense in $K$.
 \end{proposition}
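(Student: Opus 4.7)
\smallskip
\noindent
\textbf{Proof plan.} The plan is to establish both implications constructively via explicit algorithms.

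\smallskip
\noindent
\emph{From lower-computability to a dense computable sequence.} Assume an algorithm $\cA$ semi-decides the relation $K\cap B_i\neq\emptyset$. The goal is, uniformly in each ideal ball $B_{i_0}$ meeting $K$, to produce a computable point of $K$ lying in $\overline{B_{i_0}}$. I would build a nested sequence of ideal balls $B_{i_0}\supset B_{i_1}\supset\cdots$ satisfying $\overline{B_{i_{n+1}}}\subset B_{i_n}$, with radius $q_{i_n}<2^{-n}$, and each $B_{i_n}$ meeting $K$. At stage $n+1$, search dovetailingly over all ideal balls $B$ of radius less than $2^{-(n+1)}$, semi-deciding in parallel both $\overline{B}\subset B_{i_n}$ (this reduces to a strict inequality between computable reals, hence is semi-decidable) and $K\cap B\neq\emptyset$ via $\cA$; pick the first $B$ for which both checks succeed. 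Such a $B$ exists: any $p\in K\cap B_{i_n}$ has a small ideal neighborhood whose closure still lies in $B_{i_n}$. The centers $s_{i_n}$ form a Cauchy sequence with limit $x\in\bigcap_n\overline{B_{i_{n+1}}}$, and because each $B_{i_n}$ contains some point $y_n\in K$ with $d(y_n,x)<2q_{i_n}\to 0$, closedness of $K$ yields $x\in K$. Since $d(s_{i_n},x)<2^{-n}$, the map $n\mapsto s_{i_n}$ witnesses computability of $x$. Enumerating, via $\cA$, all $i_0$ for which $B_{i_0}\cap K\neq\emptyset$ and running the above construction uniformly produces a sequence $(x_{i_0})$ of uniformly computable points of $K$. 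Density is immediate: given $p\in K$ and $\eps>0$, any sufficiently small ideal ball around $p$ occurs in the enumeration, and the associated computable point lies in its closure, hence within $\eps$ of $p$.

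\smallskip
\noindent
\emph{From a dense computable sequence to lower-computability.} Assume $(x_i)_{i\in\NN}$ are uniformly computable and dense in $K$; let $\phi_i$ be a uniformly computable oracle giving $d(s_{\phi_i(n)},x_i)<2^{-n}$. To semi-decide $K\cap B_j\neq\emptyset$ for $B_j=B(s_j,q_j)$, dovetail over pairs $(i,n)\in\NN^2$ and, for each pair, semi-decide the strict inequality $d(s_{\phi_i(n)},s_j)+2^{-n}<q_j$ (a relation between computable reals, hence semi-decidable by refining approximations). Halt as soon as some pair succeeds. If the inequality holds then $x_i\in B(s_{\phi_i(n)},2^{-n})\subset B_j$, so $B_j\cap K\neq\emptyset$. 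Conversely, if $B_j\cap K\neq\emptyset$, density furnishes some $x_i\in B_j$, so $q_j-d(s_j,x_i)>0$; for all sufficiently large $n$ one has $d(s_{\phi_i(n)},s_j)+2^{-n}<q_j$, and the algorithm terminates.

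\smallskip
\noindent
\emph{Main difficulty.} The conceptually delicate step is the forward direction: we must guarantee that the limit point of the nested-ball construction actually lies in $K$ and not merely in the ambient space. This is handled by simultaneously controlling the radii ($q_{i_n}<2^{-n}$) and keeping each $B_{i_n}$ witnessed to intersect $K$, so that points of $K$ inside $B_{i_n}$ are trapped within distance $2q_{i_n}$ of the eventual limit; closedness of $K$ then delivers the conclusion. The effectivity of the search rests on the two semi-decidability facts (strict containment of ideal balls and $\cA$'s semi-decision for $K\cap B\neq\emptyset$), which can be dovetailed without ever requiring a negative decision.
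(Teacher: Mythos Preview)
Your proof is correct and follows essentially the same approach as the paper: the forward direction builds a nested sequence of exponentially shrinking ideal balls intersecting $K$ by dovetailing the semi-decidable relations $\cl(B_i)\subset B$ and $B_i\cap K\neq\emptyset$, and the backward direction (which the paper simply calls ``obvious'') is spelled out by you in the natural way. The only cosmetic difference is that you are more explicit about why the limit point lies in $K$ and about the dovetailing, whereas the paper compresses these points into a single sentence.
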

\begin{proof}
Observe that, given some ideal ball $B=B(s,q)$ intersecting $K$, the relations $\cl(B_i)\subset B$, $ q_{i}\leq 2^{-k}$ and $B_{i}\cap K\neq \emptyset$ are all semi-decidable and then we can find an exponentially decreasing sequence of ideal balls $(B_{k})$ intersecting $K$. Hence $\{x\}=\cap_k B_k$ is a computable point lying in $B\cap K$. 

The other direction is obvious.
\end{proof}

\begin{example}
\label{Julia-sets-example}
 Let $R$ be a computable rational map of degree $\deg R\geq 2$. Then the Julia set $J(R)$ is lower-computable.
\end{example}
\begin{proof}[Sketch of proof.]
We will use \corref{preimages dense}.
Periodic points of $R$ are computable (by any standard root-finding algorithm) and so are their multipliers.
We can semi-decide whether a periodic point is repelling (if the multiplier is greater than $1$ we will be able to establish this with a certainty
by computing the point and its multiplier precisely enough). 
Therefore, the repelling periodic points of $R$ are computable.
Let $w$ be any such point. The points in $\cup_{k\geq 0} f^{-k}(w)$ are uniformly computable, and dense in $J(R)$.
By Proposition \ref{p.comp-closed}, $J(R)$  is a closed lower-computable set.
\end{proof}

\begin{definition}A closed set is \defin{computable} if it is lower and upper computable.
\end{definition}

Putting together Examples \ref{example-fille-julia-set} and \ref{Julia-sets-example}, we obtain the following theorem of \cite{BBY1}:
\begin{example}
Let $P$ be a computable polynomial with $\deg P\geq 2$, and suppose that $K(P)$ has empty interior, that is, $K(P)=J(P)$. Then
$K(P)$ is a computable set.
\end{example}

\begin{figure}[ht]
\centerline{\includegraphics[width=\textwidth]{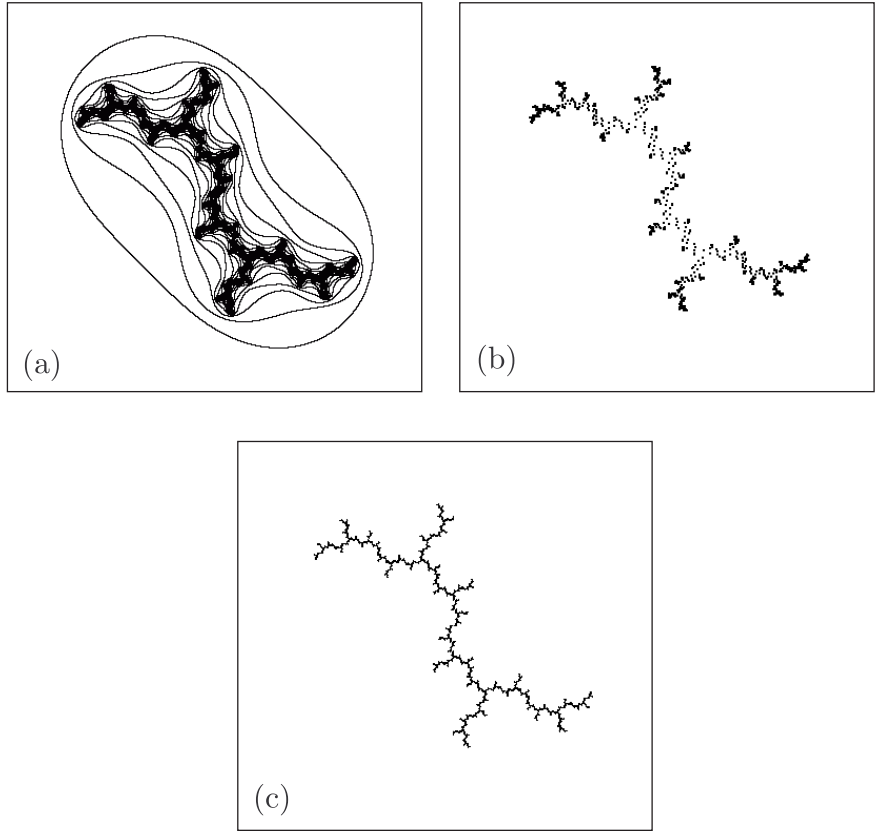}}
\caption{Computing the Julia set of $f(z)=z^2+i$ ($J(f)=K(f)$).
(a) approximating the filled Julia set from above:  the first $15$ preimages of a large disk $D=B(0,R)\supset K(f)$;
(b) approximating the Julia set from below: $\cup_{0\leq k\leq 12}f^{-k}(\beta)$ where $\beta$ is a repelling fixed point in $J(f)$;
(c) a good-quality picture of $J(f)$.
\label{julia-set-figure}
}
\end{figure}

Here is an alternative way to define a computable set. Recall that {\it Hausdorff distance} between two
compact sets $K_1$, $K_2$ is
$$\dist_H(K_1,K_2)=\inf_\eps\{K_1\subset U_\eps(K_2)\text{ and }K_2\subset U_\eps(K_1)\},$$
where $U_\eps(K)=\bigcup_{z\in K}B(z,\eps)$ stands for an $\eps$-neighborhood of a set.
The set of all compact subsets of $M$ equipped with Hausdorff distance is a metric space which we will denote
by $\text{Comp}(M)$. If $M$ is a computable metric space, then $\text{Comp}(M)$ inherits this property; the ideal
points in $\text{Comp}(M)$ are finite unions of closed ideal balls in $M$. We then have the following:

\begin{prop}
A set $K\Subset M$ is computable if and only if it is a computable point in $\text{Comp}(M)$.

Equivalenly, $K$ is computable if there exists an algorithm $\cA$ with a single natural input $n$, which
outputs a finite collection of closed ideal balls $\cl B_1,\ldots, \cl B_{i_n}$ such that
$$\dist_H(\bigcup \cl B_{i_n}, K)<2^{-n}.$$
\end{prop}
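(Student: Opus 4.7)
The plan is to verify the equivalence of upper- and lower-computability of $K$ (the paper's notion of a computable closed set) with Hausdorff-computability. Note that the two formulations in the statement are really the same condition written two ways: they simply unpack what it means to be a computable point in the computable metric space $\text{Comp}(M)$, since finite unions of closed ideal balls form a computable dense sequence in $\text{Comp}(M)$ with computable Hausdorff distances between ideal points.

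For the direction lower- and upper-computable $\Rightarrow$ Hausdorff-computable, given precision $n$, I would run two semi-decision procedures in parallel. Stream (A), using lower-computability, enumerates ideal balls $B(s_j, r_j)$ with $r_j < 2^{-n-1}$ that meet $K$; any point of such a ball lies within $2 r_j < 2^{-n}$ of $K$. Stream (B), using upper-computability, enumerates ideal balls contained in $K^c$. After seeing the first $N$ balls from each stream, the algorithm tests whether their union covers the ambient compact space (which in the setting of the paper is simply $M = \riem$). By compactness of $K$, some $N$ suffices, and for compact $M$ the covering test for a finite union of open ideal balls is semi-decidable in a standard way. Upon detection, output the closed balls from Stream (A): they cover $K$ (since the Stream (B) balls are disjoint from $K$), and each lies within $2^{-n}$ of $K$, yielding the required Hausdorff bound.

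For the reverse direction, suppose we are given $K_n = \bar B_{i_1} \cup \cdots \cup \bar B_{i_{k_n}}$ with $\dist_H(K_n, K) < 2^{-n}$. For any candidate ideal ball $B(s, q)$, the distance $d(s, K_n) = \min_j \max(0, d(s, s_{i_j}) - r_{i_j})$ is a computable real number, and $|d(s, K) - d(s, K_n)| < 2^{-n}$. To semi-decide $B(s, q) \cap K \neq \emptyset$, i.e.\ $d(s, K) < q$, halt when some $n$ satisfies $d(s, K_n) + 2^{-n} < q$; this provides lower-computability. To semi-decide $\bar B(s, q) \cap K = \emptyset$, i.e.\ $d(s, K) > q$, halt when $d(s, K_n) > q + 2^{-n}$; this yields the enumeration of ideal balls in $K^c$ required by upper-computability.

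The main obstacle lies in the forward direction, specifically the algorithmic termination check: deciding when a finite union of ideal open balls covers the ambient compact region containing $K$. In the setting of the paper, where $M = \riem$ is itself compact and nicely computable, this reduces to a standard semi-decision in computable analysis. In a general non-compact computable metric space one must first produce a computable compact superset of $K$ (available from compactness of $K$ and upper-computability), a routine but slightly delicate preliminary step that does not affect the principal logic of the argument.
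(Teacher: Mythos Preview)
The paper states this proposition without proof (it is quoted as a standard fact about computable metric spaces), so there is no argument to compare against. Your proof is correct and is essentially the standard one.

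A couple of minor remarks. In the forward direction, your termination test asks whether a finite union of ideal open balls covers the ambient compact $M$; in the paper this is exactly the content of Proposition~\ref{compacity} (stated there for $\cM(\riem)$ but the same argument works for $\riem$), so you could cite it rather than rederive it. Your caveat about the non-compact case is accurate: one first uses upper-computability to produce a computable closed ideal ball containing $K$ and runs the covering test relative to that ball. In the reverse direction, your formula $d(s,K_n)=\min_j \max(0,\,d(s,s_{i_j})-r_{i_j})$ tacitly assumes that the closed ideal ball $\cl B(s_{i_j},r_{i_j})$ coincides with the metric closed ball $\{x:d(x,s_{i_j})\le r_{i_j}\}$; this holds in $\riem$ and $\RR^n$ but can fail in a general metric space, so in full generality one should instead use that $d(s,K_n)$ is computable to within $2^{-n}$ directly from the definition of $\text{Comp}(M)$ as a computable metric space. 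None of this affects the validity of your argument in the setting of the paper.
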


\subsection{Computable probability measures}

Let $\M(X)$ denote the set of Borel probability measures over a metric space $X$. We recall the notion of
weak convergence of measures:

\begin{definition}
A sequence of measures $\mu _{n}\in M(X)$ is said to be \defin{weakly convergent} to $\mu\in M(X) $ if $\int f d\mu_{n}\rightarrow \int f d\mu$ for each $f\in C_{0}(X)$. 
\end{definition}

Any smaller family of functions characterizing the weak convergence is called \defin{sufficient}.  It is well-known, that when  $X$ is a compact separable and complete metric space, then so is $\M(X)$.

\noindent
Weak convergence on $M(X)$ is compatible with the notion of \defin{Wasserstein-Kantorovich distance}, defined by:

\begin{equation*}
W_{1}(\mu,\nu)=\underset{f\in 1\text{-Lip}(X)}{\sup}\left|\int f d\mu-\int f d\nu\right|
\end{equation*}
\noindent where $1\mbox{-Lip}(X)$ is the space of Lipschitz functions on $X$, having Lipschitz constant less than one.

  The following result (see \cite{HoyRoj07}) says that, when $X$ is a computable metric space, $\M(X)$ inherits its computability structure.

\begin{proposition}
Let $\cD$ be the set of finite convex rational combinations of Dirac measures supported on ideal points of $X$. Then the triple $(\M(X),W_{1}, \cD)$ is a computable metric space. 
\end{proposition}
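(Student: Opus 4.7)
The plan is to verify the three defining properties of a computable metric space for the triple $(\M(X), W_1, \cD)$: (i) $(\M(X), W_1)$ is a separable metric space; (ii) $\cD$ is a dense sequence that admits an effective enumeration; (iii) the distances $W_1(d, d')$ are uniformly computable for $d, d' \in \cD$.

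For (i) and (ii), I would appeal mostly to classical facts and then add the algorithmic content. Separability of $(\M(X), W_1)$ on a separable (say, compact) metric space is standard. For density of $\cD$, given any $\mu \in \M(X)$ and $\eps > 0$, cover the support of $\mu$ by finitely many ideal balls $B(s_{i_k}, \eps)$, take a Borel refinement into disjoint pieces $P_k$, form $\mu_\eps = \sum_k \mu(P_k)\,\delta_{s_{i_k}}$, and note that the transport plan sending each $P_k$ to $s_{i_k}$ has cost $\leq \eps$. Replacing the weights $\mu(P_k)$ by rationals summing to $1$ gives an element of $\cD$ within distance $2\eps$. Effective enumeration of $\cD$ is immediate: elements are encoded by finite lists $((i_1,r_1),\ldots,(i_m,r_m))$ with $i_j \in \NN$, $r_j \in \QQ_{\geq 0}$, $\sum r_j = 1$, a decidable subset of a standard countable coding.

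The main step is (iii). Given $\mu = \sum_i a_i\,\delta_{s_{k_i}}$ and $\nu = \sum_j b_j\,\delta_{s_{l_j}}$ in $\cD$, Kantorovich-Rubinstein duality yields
\[
W_1(\mu,\nu) = \min\Bigl\{\, \sum_{i,j} \pi_{ij}\, d(s_{k_i}, s_{l_j}) \;:\; \pi_{ij}\geq 0,\ \textstyle\sum_j \pi_{ij}=a_i,\ \sum_i \pi_{ij}=b_j\,\Bigr\},
\]
a finite transportation LP. The key observation is that, holding the rational marginals $(a_i),(b_j)$ fixed, the optimal value is $1$-Lipschitz in the cost matrix $D_{ij} = d(s_{k_i}, s_{l_j})$ measured in $\ell^\infty$: any coupling is a probability measure, so perturbing $D$ by $\eps$ perturbs every objective by at most $\eps$. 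Since $X$ is a computable metric space, the entries $D_{ij}$ are computable reals uniformly in $(i,j)$. Thus, to compute $W_1(\mu,\nu)$ within $2^{-n}$, I would: compute rational approximations $\widetilde D_{ij}$ to $D_{ij}$ within $2^{-n-1}$; solve the resulting finite rational LP exactly (e.g.\ by enumerating the finitely many extreme points of the transportation polytope, all of which have rational coordinates with bounded denominators); and output the resulting rational value, which by the Lipschitz bound lies within $2^{-n}$ of $W_1(\mu,\nu)$.

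The main obstacle is purely bookkeeping: packaging the indices of $\mu, \nu$ together with the target precision $n$ into a single algorithm whose correctness is uniform. There is no real analytic difficulty, since the Lipschitz stability of the LP value in the cost matrix decouples the two sources of inexactness (rational approximation of distances versus rational solution of the LP), and the combinatorial structure of the transportation polytope is independent of the numerical data. Once this is set up, Proposition density together with uniform computability of $W_1$ on $\cD\times\cD$ is exactly the definition of a computable metric space, completing the verification.
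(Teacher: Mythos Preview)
The paper does not actually prove this proposition: it is quoted with a reference to \cite{HoyRoj07} and no argument is given in the text. So there is no ``paper's own proof'' to compare against.

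That said, your proposal is a correct and standard direct verification. Density of $\cD$ and the effective enumeration are routine; the only point with any content is (iii), and your argument there is sound. The identification of $W_1(\mu,\nu)$ for finitely supported $\mu,\nu$ with a finite transportation LP is exactly Kantorovich duality, and the $1$-Lipschitz dependence of the optimal value on the cost matrix in $\ell^\infty$ (because every feasible coupling has total mass $1$) is the right stability estimate to reduce to a rational LP. Solving that LP exactly over $\QQ$ is unproblematic since the vertices of the transportation polytope with rational marginals are rational. One small caveat: for $W_1$ to be a genuine metric (in particular, finite) and to metrize weak convergence you are implicitly using that $X$ has bounded diameter, which in the paper's applications ($X=\riem$) is the case; you note this parenthetically, but it is worth making the hypothesis explicit. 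With that in place your sketch would constitute a complete proof of the proposition.
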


\begin{defn}
A \defin{computable measure} is a computable point in $(\M(X),W_{1}, \cD)$. That is, it is a measure which can be algorithmically approximated in the weak sense by \emph{discrete} measures with any given precision.  
\end{defn}

\noindent
As examples of computable measures, consider the Lebesgue measure in $\RR^n$, or any smooth measure in $\RR^n$
with a computable density function.

\ignore{
In general, even if $\mu$ is a computable measure, we can not expect the measure of sets to be computable.  

\begin{example}-
\begin{itemize}
\item[(a)] It is obvious that recursively compact (or open) sets do not need to have computable measure. Just take  Lebesgue measure on $[0,1]$ and let $m$ be an upper-computable number which is not computable. Then the set $[0,m]$ is recursively compact, and its measure is not computable. 
\item [(b)] A computable middle Cantor set with measure $m$ can be obtained as follows: let $1=m_{0}> m_{1} > m_{2} ...$ be a sequence of rational numbers converging to $m$ from above. Set $E_{0}=[0,1]$.  construct $E_{i}$ by removing an open interval of length $2^{-i+1}(m_{i-1}-m_{i})$ from the middle of each component of $E_{i-1}$. The set  $E=\cap E_{i}$ is computable and has measure $m$. For, each removed interval is r.e open so that the complement of $E$ is recursively open, and the endpoints of those intervals form a sequence of computable points which is dense in $E$.
\end{itemize}
\end{example}

}

The following proposition (see \cite{HoyRoj07}) gives a useful characterization of the computability of the measure. 

\begin{proposition}\label{t.comp-measure}
Let $\mu$ be a Borel probability measure. The following statements are equivalent:
\begin{enumerate}
\item $\mu$ is computable,
\item $\mu(B_{i_1}\cup\ldots\cup B_{i_n})$ are lower-computable, uniformly in $i_1,\ldots,i_n$,
\item for any uniformly computable sequence of functions $(f_{i})_{i}$, the integral $\int f_{i}d\mu$ is computable uniformly in $i$.
\end{enumerate}
\end{proposition}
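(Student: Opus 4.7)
The plan is to establish the three conditions cyclically as $(1) \Rightarrow (3) \Rightarrow (2) \Rightarrow (1)$, in the setting of a compact computable metric space $X$ (the relevant case for the applications in this paper).

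For $(1) \Rightarrow (3)$, I would exploit Kantorovich duality. From a computable sequence $\nu_k \in \cD$ with $W_1(\mu, \nu_k) < 2^{-k}$, uniformly approximate each computable $f_i$ by a computable $L$-Lipschitz function $g_{i,L}$ with $\|f_i - g_{i,L}\|_\infty < 2^{-m}$; for instance via the inf-convolution $g_{i,L}(x) = \inf_y(f_i(y) + L\, d(x,y))$, which is $L$-Lipschitz and converges uniformly to $f_i$ as $L \to \infty$ by the uniform continuity of $f_i$ on the compact set $X$. Then
$$\left|\int g_{i,L}\, d\mu - \int g_{i,L}\, d\nu_k\right| \leq L \cdot W_1(\mu, \nu_k) < L \cdot 2^{-k},$$
and $\int g_{i,L}\, d\nu_k$ is a finite rational combination of computable values $g_{i,L}$ at the ideal atoms of $\nu_k$, hence computable. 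Choosing $k > \log_2 L + m + 1$ produces a rational within $2^{-m+1}$ of $\int f_i\, d\mu$, uniformly in $i$ and $m$.

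For $(3) \Rightarrow (2)$, for each ideal ball $B = B(s, r)$ and integer $k$ the tent function
$$g_k^B(x) = \min\bigl(1,\ k \cdot \max(0,\ r - d(x,s))\bigr)$$
is uniformly computable in $(s, r, k)$ and satisfies $g_k^B \nearrow \chi_B$ pointwise. For a finite union $U = B_{i_1} \cup \ldots \cup B_{i_n}$, the function $g_k^U = \max_j g_k^{B_{i_j}}$ is again uniformly computable, and $g_k^U \nearrow \chi_U$. Monotone convergence combined with (3) exhibits $\mu(U) = \sup_k \int g_k^U\, d\mu$ as the supremum of a uniformly computable sequence of computable reals, hence lower-computable uniformly in the parameters.

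For $(2) \Rightarrow (1)$, first extend (2) by monotone continuity to lower-computability of $\mu(U)$ for any uniformly lower-computable open set $U$. For any computable non-negative bounded $f$, the super-level sets $\{f > t\}$ are uniformly lower-computable open in $t \in \QQ$, and Riemann-approximating the layer-cake formula
$$\int f\, d\mu = \int_0^{\|f\|_\infty} \mu(\{f > t\})\, dt$$
shows $\int f\, d\mu$ to be lower-computable. Applied to $\|f\|_\infty - f$ this gives upper-computability, so $\int f\, d\mu$ is in fact computable; this proves (3) for non-negative $f$, and the general case follows by splitting $f = f^+ - f^-$. To finally deduce (1), for each $\eps > 0$ cover $X$ by ideal balls $B_j = B(c_j, \eps/16)$, $j = 1, \ldots, N$, and form the computable Lipschitz partition of unity $\varphi_j = \psi_j/\sum_i \psi_i$ with $\psi_j(x) = \max(0,\, \eps/16 - d(x, c_j))$. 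The weights $\alpha_j = \int \varphi_j\, d\mu$ are then computable reals summing automatically to $1$, and a one-line calculation using $\sum_j \varphi_j \equiv 1$ together with $\operatorname{supp}\varphi_j \subset B_j$ gives $W_1\bigl(\mu,\ \sum_j \alpha_j \delta_{c_j}\bigr) \leq \eps/16$. Approximating $\alpha_j$ by rationals of precision $\eps/(32(N+1)\diam X)$ and correcting the single atom $\delta_{c_1}$ to restore total mass $1$ yields an element of $\cD$ within $\eps$ of $\mu$ in $W_1$. The main obstacle throughout is this last implication: with only lower-computable control of $\mu$ on open sets, it is not a priori clear how to produce a probability measure approximation respecting the normalization $\sum_j \alpha_j = 1$. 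The layer-cake detour promoting (2) to actual (not merely lower-) computability of integrals is the crucial step, after which a computable partition of unity yields weights that automatically sum to one, bypassing the asymmetry between upper and lower bounds on $\mu$.
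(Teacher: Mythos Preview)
The paper does not actually prove this proposition: it is quoted with a citation to \cite{HoyRoj07} and used as a black box, so there is no ``paper's own proof'' to compare against.

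Your cyclic argument $(1)\Rightarrow(3)\Rightarrow(2)\Rightarrow(1)$ is correct and is in fact the standard route (and essentially the one in the cited reference). Two small points are worth making explicit. In $(1)\Rightarrow(3)$, knowing \emph{which} $L$ gives $\|f_i-g_{i,L}\|_\infty<2^{-m}$ requires a computable modulus of uniform continuity for $f_i$; this is available because computable functions on an effectively compact space have computable moduli, and the same effective compactness is what makes the inf-convolution $g_{i,L}$ itself computable. In $(2)\Rightarrow(1)$, your layer-cake detour upgrading lower-computability on open sets to genuine computability of integrals is exactly the right move, and the partition-of-unity construction that follows is clean; your diagnosis of the obstacle---that one-sided information on $\mu$ does not obviously produce weights summing to $1$---is apt, and passing through (3) is precisely what resolves it.
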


\noindent
We will also need the following fact  (see \cite{Cristobal}), :
\begin{proposition}\label{t.comp-integrals}If $(f_{i})_{i}$ is a uniformly computable sequence of functions, then
the integral operators $$L_{i}:\mathcal{M}(X)\to \R\text{ defined by }L_{i}(\mu):=\int f_{i}d\mu,$$ are uniformly computable.
\end{proposition}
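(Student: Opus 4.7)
The task is to produce a single algorithm which, on inputs $i$, $n$ and an oracle giving indices of ideal measures $\nu_k\in\cD$ with $W_1(\mu,\nu_k)<2^{-k}$, outputs a rational within $2^{-n}$ of $\int f_i\,d\mu$. The strategy is a three-step error budget: approximate $\mu$ by a sufficiently close ideal $\nu\in\cD$, bound $|\int f_i\,d\mu-\int f_i\,d\nu|$ via a theoretical Lipschitz proxy for $f_i$, and evaluate $\int f_i\,d\nu$ as a finite rational combination of computable reals.

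For an ideal $\nu=\sum_j a_j\delta_{x_j}\in\cD$ with rational weights $a_j$ and ideal points $x_j$, we have $\int f_i\,d\nu=\sum_j a_j f_i(x_j)$, and each $f_i(x_j)$ is computable uniformly in $(i,j)$ since $f_i$ is a computable function and $x_j$ is an ideal point; hence for any precision $\eta$ we can compute a rational $r$ with $|r-\int f_i\,d\nu|<\eta$. To bound $|\int f_i\,d\mu-\int f_i\,d\nu|$, observe that on a compact space $X$ (implicit in this setting, since otherwise the supremum defining $W_1$ against Lipschitz functions and even the integral $\int f_i\,d\mu$ need not be finite) any computable function is uniformly continuous with a modulus of continuity effectively obtainable from the oracle for $f_i$. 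From this modulus, for any prescribed $\delta>0$, we construct a Lipschitz function $g_i$ with computable constant $L_i=L_i(\delta)$ and $\sup|f_i-g_i|<\delta$, for instance via the inf-convolution $g_i(x)=\inf_y\bigl[f_i(y)+L\cdot d(x,y)\bigr]$ for $L$ chosen large relative to the modulus. The Kantorovich-Rubinstein duality defining $W_1$ then gives $|\int f_i\,d\mu-\int f_i\,d\nu|\leq 2\delta+L_i\,W_1(\mu,\nu)$. Choosing $\delta=2^{-n-2}$, then $k$ with $L_i\cdot 2^{-k}<2^{-n-2}$, and finally evaluating the sum $\sum_j a_j f_i(x_j)$ to precision $2^{-n-2}$, yields total error below $2^{-n}$.

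The main technical point is that the Lipschitz constant $L_i$ depends on $\delta$ (and hence on $n$) and on the modulus of continuity of $f_i$, yet the entire construction must be carried out uniformly in $i$. This relies on the compactness of $X$ and on the standard fact that a computable function on a compact computable metric space admits a computable modulus of uniform continuity extracted from its oracle. With these ingredients in place, the three error contributions compose into an algorithm witnessing uniform computability of the integral operators $L_i$, establishing the proposition.
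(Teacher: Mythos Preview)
The paper does not actually prove this proposition; it is quoted as a known fact with a citation to \cite{Cristobal}. So there is no ``paper's own proof'' to compare against.

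Your argument is sound in outline and is the standard approach. Two points deserve tightening. First, extracting a computable modulus of uniform continuity for $f_i$ requires not mere compactness of $X$ but \emph{effective} (recursive) compactness: one must be able, given $\epsilon>0$, to compute a finite $\epsilon$-net of $X$. This holds for $\riem$, the case of interest here, but your parenthetical ``compactness of $X$'' understates the hypothesis. Second, to choose the constant $L=L_i(\delta)$ in the inf-convolution you implicitly need a computable bound on $\sup|f_i|$ (or equivalently on the oscillation of $f_i$), since for $t$ bounded away from zero you control $\omega(t)-Lt$ via $\omega(t)\le 2\sup|f_i|$; this bound is again available from effective compactness together with uniform computability of the $f_i$. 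With those two points made explicit, your three-term error budget (Lipschitz-proxy gap $2\delta$, transport error $L_i\cdot W_1(\mu,\nu)$, finite-sum evaluation error) closes correctly and uniformly in $i$.
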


\noindent
To illustrate the concepts we have introduced, 
we end this section by constructing an example of a computable set $E\subset I=[0,1]$ such that Lebesgue measure restricted to it is not computable.  
Indeed, any non-atomic  probability measure assigning positive measure to intervals in $E$, will not be computable.  

\begin{example} Let $\cA_{i}$ be a G{\"o}del ordering of algorithms.  Set $a_{0}=0$ and $a_{i}=a_{i-1}+2^{-i}$. Define the set $S_{i}$ to be:
\[
		S_{i}=\begin{cases}
	(a_{i},a_{i+1})\setminus \{a_{i}+n2^{-j-i}: n=1,...,2^j-1\},         &   \cA_{i} \text{ halts in }j\text{ steps} \\
			 \qquad \qquad   \emptyset,  & \cA_{i} \text{ does not halt}.  \\
			
	\end{cases}
\]

Our set is defined by  $$E=I\setminus \cup_{i}S_{i}.$$

Clearly, $\cup_{i}S_{i}$ is  lower-computable open and thus $E$ is upper computable.  

Let us prove that $E$ is also lower-computable by producing a dense computable sequence of points in $E$.
To this end, we run an algorithm which
at step $j$ simulates all algorithms $\cA_i$, $i\leq j$ for the first $j$ steps (or until they halt).
For every $i$ such that $\cA_i$ does not halt in fewer than $j$ steps it then outputs the set 
$$\{a_{i}+n2^{-j-i}: n=1,...,2^j-1\}.$$ We denote $E_j$ the union of the sets output by the algorithm at step $j$.

It is clear that
$$E=\cl \left(\cup_jE_j\right).$$
Thus $E$ is lower-computable, and hence, computable.

Suppose $\mu$ is a non atomic probability measure on $E$ assigning positive mass to every interval in $E$. 
Then, for each $i$, $\mu(a_{i},a_{i+1})>0$ if and only if $\cA_{i}$ does not halt. 

Let us assume that $\mu$ is a computable measure on $E$. Then, by
 Theorem \ref{t.comp-measure}, the relation $\mu(I)>0$ is semi-decidable for any rational interval $I$.
 Hence the Halting set is upper-computable. Since it is also lower-computable, the Halting set is
 computable. We have thus arrived at a contradiction with the undecidability of the Halting problem.
 
\end{example}

\section{Computability of Brolin-Lyubich measure}

\subsection{Some preliminaries}
In what follows we will require the following  facts. The first theorem is classical, see e.g. \cite{Mil}.

\begin{onequarter}
\it If $f$ is a univalent function on a disk $D\equiv B(z_{0},r)\subset\CC$, then
$$
\dist(f(z_{0}),\partial (f(D)) \geq \frac{1}{4}|f'(z_{0})| r 
$$
\end{onequarter}
 
\ignore{

\noindent
Let us use Koebe 1/4 Theorem to derive the following statement:
\begin{prop}
\label{inverse-computable}
Suppose $f$ is a conformal computable function on some lower-computable open domain $U$. 
Then the inverse $f^{-1}:f(U)\to U$ is also computable. 
\end{prop}
\begin{proof}
Given an orcale for $w\in f(U)$, and $n\in\NN$ we use an exhaustive search to find a rational point $z\in U$ 
with the property
$$B(f(z),2^{-n}f'(z)/4)\ni w.$$
By Koebe 1/4 Theorem, $f^{-1}(w)\in B(z, 2^{-n})$, and thus we can output $z$ as a $2^{-n}$-approximation to the
value of $f^{-1}(w)$.

\end{proof}

}

Considerations of compactness (see \cite{GalHoyRoj09}) imply that there is an algorithmic procedure to
semi-decide whether a given lower-computable open set of probability measures on $\riem$ contains the whole 
$\cM(\riem)$. It will be convenient for us to use a uniform version of this statement:

\begin{proposition}\label{compacity} Let $(U_{i})_{i}$ be a sequence of uniformly lower-computable open subsets of $\M(\riem)$. 
Then the relation $\M(\riem)\subseteq U_{i}$ is semi-decidable, uniformly in $i$. 
\end{proposition}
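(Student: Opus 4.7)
The plan is to exploit the recursive compactness of $\M(\riem)$, which in turn rests on the recursive compactness of $\riem$ itself. The semi-decision algorithm I would design searches for a finite effective cover of $\M(\riem)$ by ideal Wasserstein balls, each of which is verified to lie inside a sufficiently early term of the enumeration of $U_i$.

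First, I would establish that $\riem$, equipped with the spherical metric and with ideal points given by the rationals in $\CC$ together with $\infty$, is recursively compact: for every rational $\eps>0$ one can algorithmically produce a finite set $N_\eps\subset\S$ of ideal points forming an $\eps$-net, e.g.\ by using a rational lattice in each of the two standard coordinate charts. Next, I would lift this to $\M(\riem)$. Given an $\eps$-net $N_\eps=\{z_1,\dots,z_m\}$ of $\riem$ and a positive integer $N$, let $\cD_{\eps,N}$ be the finite set of ideal measures $\sum_{k=1}^m (a_k/N)\delta_{z_k}$ with nonnegative integers $a_k$ summing to $N$. A standard estimate using Kantorovich duality shows that every $\mu\in\M(\riem)$ lies within Wasserstein distance $\eps + m\,\diam(\riem)/N$ of some element of $\cD_{\eps,N}$: first push $\mu$ onto $N_\eps$ via a nearest-point partition (cost at most $\eps$), then round the resulting weights to integer multiples of $1/N$. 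Choosing $N$ large enough produces a finite, uniformly computable $(2\eps)$-net in $\M(\riem)$ consisting of ideal measures.

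With this in hand, I would run the following algorithm on input $i$. In parallel, enumerate the ideal balls $B_{i,1},B_{i,2},\dots$ whose union equals $U_i$, and for each pair $(n,k)\in\NN^2$ compute a cover $\{B(\nu,2^{-k+1}):\nu\in\cD_k\}$ of $\M(\riem)$ obtained from the construction above (with $\eps=2^{-k}$). Attempt to verify that each ball in this cover is strictly contained in some $B_{i,j}$ with $j\leq n$, using the criterion
\[
B(\nu,r)\subseteq B(c_{i,j},r_{i,j})\iff W_1(\nu,c_{i,j})+r<r_{i,j},
\]
which is semi-decidable because $W_1$ is computable on ideal measures. If every ball in the cover passes the test for some $j\leq n$, the algorithm halts and accepts.

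The main obstacle is to show that this procedure halts whenever $\M(\riem)\subseteq U_i$. For this I would invoke a Lebesgue number argument: by compactness of $\M(\riem)$ and openness of the cover $\{B_{i,j}\}_j$, there exists $\delta>0$ such that every ball of radius $\delta$ centered in $\M(\riem)$ is contained in some $B_{i,j}$, with a definite margin to spare. Choosing $k$ with $2^{-k+1}<\delta/2$ and $n$ large enough to include all the required indices $j$, the strict containment criterion is satisfied throughout the cover, and the algorithm halts. All ingredients are uniform in $i$, yielding the desired uniform semi-decision.
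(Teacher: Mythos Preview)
Your argument is correct and follows essentially the same route as the paper's sketch: build effective $\eps$-nets of $\M(\riem)$ out of effective nets of $\riem$ and of the weight simplex, then use a Lebesgue-number argument to certify that some finite list of enumerated ideal balls absorbs one of these nets with room to spare. The paper states this very tersely (``Such a net can be computed from a net of $\riem$ and a net of $[0,1]$''), whereas you spell out the push-forward-and-round construction and the explicit containment criterion $W_1(\nu,c)+r<r'$; both amount to the same proof.
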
 

\begin{proof}[Sketch of  proof]
It is enough that for any given finite list of ideal balls $\{B_{k_{i}}\}_{i=1}^{m}$,  we can semi-decide the relation
 $$
\M(\riem)\subseteq \bigcup_{i\leq m}B_{k_{i}}.
$$
If this last relation holds, then the union on the right  must contain the elements of any $2^{-n}$-net of $\M(\riem)$, provided that  $2^{-n}$ is less than (half of) the Lebesgue number of the covering  $\{B_{k_{i}}\}_{i=1}^{m}$. Such a net can be computed from a net of $\riem$ and a net of $[0,1]$.

\end{proof}

\subsection{Proof of Theorem A} 
Consider a rational map $R(z)=P(z)/Q(z)$ of degree $d$. 
The coefficients of $P$ and $Q$ form two $(d+1)$-tuples of complex numbers, and we can thus specify $R$ by 
a $(2d+2)$-tuple of coefficients, or a point in $\CC^{2d+2}$. 
It is clear that

\begin{prop}
$R:\riem\to\riem$ is a computable function if and only if there exists a computable point in $\CC^{2d+2}$ which specifies
$R$.
\end{prop}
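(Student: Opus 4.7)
The proposition consists of two implications, each translating between the representation of $R$ as a computable point of a finite-dimensional parameter space and as a computable map on the Riemann sphere. I would treat them separately.

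For $(\Leftarrow)$, assume the coefficient vector $(a_0,\ldots,a_d,b_0,\ldots,b_d)\in\CC^{2d+2}$ is a computable point. Then both $P(z)=\sum a_iz^i$ and $Q(z)=\sum b_iz^i$ are computable functions $\CC\to\CC$, since polynomial evaluation is a finite composition of the computable arithmetic operations of complex addition and multiplication. To upgrade to computability of $R:\riem\to\riem$, I would cover $\riem$ by the two standard charts around $0$ and $\infty$; in the latter, after the substitution $z=1/w$ and multiplication of numerator and denominator by $w^d$, $R$ is expressed as $\tilde P(w)/\tilde Q(w)$, where $\tilde P,\tilde Q$ are polynomials obtained from $P,Q$ by reversing the order of coefficients, and hence also computable. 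Given an oracle for $z\in\riem$, one semi-decides $|z|<2$ versus $|z|>1/2$ in parallel; at least one verification succeeds, determining the chart in which to evaluate. Composing with the computable quotient map $\CC^2\setminus\{(0,0)\}\to\riem$, $(u,v)\mapsto u/v$ — which itself routes through whichever chart of the target keeps the local coordinate bounded — yields $R(z)\in\riem$ to the required precision.

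For $(\Rightarrow)$, assume $R$ is computable as a function. Sample $R$ at $N=2d+2$ distinct rational points $z_0,\ldots,z_{N-1}$, obtaining uniformly computable values $w_i=R(z_i)\in\riem$. For each finite $w_i$, the identity $P(z_i)=w_iQ(z_i)$ gives a linear relation on the unknown coefficients $(a_0,\ldots,b_d)$; for each $w_i=\infty$, after passing to the chart at infinity of the target, one gets the analogous linear relation $Q(z_i)=0$. Since $R$ has at most $d$ poles, all these relations are obtainable from uniformly computable data. Taken together, the $N$ linear equations in the $2d+2$ unknowns have a one-dimensional solution space, namely the $\CC^\ast$-orbit of the true coefficient vector, and this orbit determines a computable point in the projective space $\PP^{2d+1}$ (equipped with its standard computable metric structure).

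The main obstacle is lifting this projective computable point back to a concrete computable point of $\CC^{2d+2}$, since a priori we cannot decide which of its homogeneous coordinates is nonzero. The plan is to cover $\PP^{2d+1}$ by its $2d+2$ affine charts $U_j=\{[v]:v_j\ne 0\}$; each chart's defining inequality is semi-decidable, and on each chart the section $U_j\to\CC^{2d+2}$ setting the $j$-th coordinate to $1$ is computable. Since the true coefficient vector is nonzero, some $U_j$ contains its image; running the $2d+2$ chart tests in parallel, one commits to the first chart whose inclusion is verified and outputs the corresponding lift. This yields a Cauchy sequence in $\CC^{2d+2}$ converging to the specific representative of the coefficient orbit that normalizes the chosen coordinate to $1$, producing the required computable point.
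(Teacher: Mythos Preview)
The paper does not actually prove this proposition; it is introduced with the phrase ``It is clear that'' and no argument is given. Your proposal supplies the details the paper omits, and the overall strategy in both directions is sound.

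There is one imprecision in your $(\Rightarrow)$ direction worth flagging. You write that for a sample point $z_i$ with $w_i=\infty$ one passes to the chart at infinity of the target and records the relation $Q(z_i)=0$. But whether $w_i=\infty$ is not decidable---each of $w_i\ne\infty$ and $w_i\ne 0$ is only semi-decidable---so you cannot branch on it to decide which form of equation to write down. The clean fix is already implicit in your remark that $R$ has at most $d$ poles: rather than handling poles, avoid them. Enumerate rational points $z$ and, for each, semi-decide $R(z)\ne\infty$; since all but at most $d$ rationals are non-poles, this search terminates with $2d+2$ distinct sample points whose values $w_i\in\CC$ are finite and uniformly computable. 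All the linear equations then take the uniform shape $P(z_i)-w_iQ(z_i)=0$, and the rest of your argument---the kernel being the single line $\CC\cdot(P_0,Q_0)$ (since $PQ_0-P_0Q$ has degree $\le 2d$ and $2d+2$ roots, forcing $(P,Q)=g\cdot(P_0,Q_0)$ with $g$ constant), the computability of a one-dimensional kernel via a nonvanishing $(2d+1)\times(2d+1)$ minor, and the lift from $\PP^{2d+1}$ to an affine chart---goes through as written.
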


\noindent
Let us now formulate a precise version of the Theorem A:

 \begin{theorem} 
For a rational map $R$ denote by $\lambda_{R}$ its Brolin-Lyubich measure. 
Then the functional 
 \begin{align*}
 \mathcal{F}:  \CC^{2d+2}  & \to  \M(\riem)\\
                         R & \mapsto   \lambda_{R} 
 \end{align*}
is computable. 
 \end{theorem}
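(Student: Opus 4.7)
My plan is to exploit that $\lambda_R$ is the \emph{unique} balanced probability measure on $\riem$, combining this uniqueness with the computability of $(\cM(\riem),W_1)$ and the semi-decidable compactness test of Proposition \ref{compacity}. Fix once and for all a uniformly computable sequence $\{f_i\}$ dense in $C(\riem)$ --- for concreteness, polynomials in $z$ and $\bar z$ with Gaussian-rational coefficients --- and introduce the transfer operator
\[
\mathcal{L}_R f(z) \;=\; \frac{1}{d}\sum_{R(w)=z} f(w),
\]
with preimages counted with multiplicity. Since the multiset $R^{-1}(z)$ depends computably on $(R,z)$ via standard univariate root-finding, each $\mathcal{L}_R f_i$ is a computable function on $\riem$, uniformly in $(R,i)$. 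A measure $\mu\in\cM(\riem)$ is balanced precisely when $\int f_i\,d\mu = \int \mathcal{L}_R f_i\,d\mu$ for every $i$, so non-balance is witnessed by some pair $(i,\epsilon)$ placing $\mu$ in
\[
W^R_{i,\epsilon} \;:=\; \Bigl\{\mu\in\cM(\riem) : \Bigl|\int f_i\,d\mu - \int \mathcal{L}_R f_i\,d\mu\Bigr| > \epsilon\Bigr\}.
\]
By Proposition \ref{t.comp-integrals} the two integrals are computable functionals of $\mu$, so each $W^R_{i,\epsilon}$ is a lower-computable open set in $\cM(\riem)$, uniformly in $(R,i,\epsilon)$.

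Given a target precision $n$, the algorithm dovetails through pairs consisting of an ideal discrete measure $\mu_k\in\cD$ and a finite list $\{(i_j,\epsilon_j)\}_{j\leq m}$, and for each such pair uses Proposition \ref{compacity} to semi-decide whether
\[
\cM(\riem) \;\subseteq\; B(\mu_k, 2^{-n}) \;\cup\; \bigcup_{j=1}^m W^R_{i_j,\epsilon_j}.
\]
The first confirmed covering determines the output $\mu_k$. Correctness is immediate: $\lambda_R$ is balanced and so avoids every $W^R_{i,\epsilon}$, forcing $\lambda_R \in B(\mu_k, 2^{-n})$, i.e.\ $W_1(\mu_k,\lambda_R) < 2^{-n}$. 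To see that the search terminates, pick an ideal $\mu_k\in\cD$ with $W_1(\mu_k,\lambda_R) < 2^{-n-1}$: every $\mu$ in the compact complement $\cM(\riem)\setminus B(\mu_k,2^{-n})$ is distinct from $\lambda_R$ and hence, by uniqueness of the balanced measure, lies in some $W^R_{i,\epsilon}$. Compactness then supplies a finite subcover that our enumeration will eventually test.

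Every step above depends computably and uniformly on the coefficients $R \in \CC^{2d+2}$, so the resulting procedure realizes $\mathcal F : R \mapsto \lambda_R$ as a computable map of metric spaces. The main obstacle I anticipate is exactly the subtlety already hinted at in the introduction: being balanced is an equality condition, which is not semi-decidable, whereas being \emph{non}-balanced is. Uniqueness of $\lambda_R$ together with compactness of $\cM(\riem)$ is precisely what converts this one-sided information into an effective algorithm via Proposition \ref{compacity}. Note in particular that no quantitative rate for the iteration $(\mathcal{L}_R^*)^n\nu \to \lambda_R$ is required, matching the authors' remark that no serious analytic machinery enters Theorem A.
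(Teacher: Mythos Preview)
Your argument is correct and follows the same overall strategy as the paper: show that the complement of $\{\lambda_R\}$ in $\M(\riem)$ is a lower-computable open set, then use Proposition~\ref{compacity} to locate $\lambda_R$ to any prescribed precision. The difference is in how non-balance is detected. The paper works locally: for each non-critical ideal point $z_i$ it produces, via Koebe's $1/4$ theorem, a ball $B_i$ on which $R$ is univalent, and then compares $\int \varphi\circ R_i^{-1}\,d\mu$ with $d\int\varphi\,d\mu$ for bump functions $\varphi$ supported in $B_i$; it also separately semi-decides non-invariance. You instead observe that being balanced is exactly the fixed-point equation $\mathcal L_R^*\mu=\mu$ and test $\int f_i\,d\mu$ against $\int \mathcal L_R f_i\,d\mu$ globally. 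Your route is tidier---no local inverse branches, no Koebe, no separate invariance check---but it leans on the computability of $\mathcal L_R f$ from $(R,f)$, which in turn rests on the standard but not entirely trivial fact that the \emph{multiset} $R^{-1}(z)$ is computable uniformly in $(R,z)$, including at critical values where individual preimages collide.

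One small fix is needed: polynomials in $z$ and $\bar z$ are unbounded and so do not lie in $C(\riem)$; they cannot serve as your dense test family $\{f_i\}$. Use instead any uniformly computable dense family of continuous functions on the sphere, for instance the bump functions $\varphi_{s,r,\epsilon}$ of~(\ref{lip functions}) and their rational combinations, exactly as the paper does.
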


\begin{remark}
In other words, there exists an algorithm $\A$  with an oracle for $\bar v\in\CC^{2d+2}$ and a single natural input $n$ which
outputs a measure $\mu\in \cD$ which has the following property. If $R$ is the rational map with coefficients $\bar v$ then
$$W_{1}(\mu,\lambda_{R})<2^{-n}.$$
\end{remark}

\noindent
Of course, if $R$ is computable, then the oracle can be replaced with  an algorithm computing the coefficients of $R$.

\smallskip
\noindent

\begin{proof}[Proof of the Theorem A] 

Let $R$ be a rational map of degree $d$ and $\phi$ be an oracle for the coefficients of $R$.
 Given $n\in \NN$, we will show how to compute an ideal ball $B_{n}\subset \M(\riem)$ 
with radius $2^{-n}$ containing $\lambda_{R}$.   

Let $U\subset \M(\riem)$ be the set of probability measures which are not invariant with respect to $R$, 
and let $V\subset \M(\riem)$ be the set of probability measures which are not balanced.  
In the following, we show that, using the oracle $\phi$, both $U$ and $V$ are lower-computable open sets.  

Let us introduce a certain fixed, enumerated sequence of Lipschitz computable functions which we will use as test functions.  
Let $\mathcal{H}_0$ be the set of functions of the form:
\begin{equation}\label{lip functions}
\varphi_{s,r,\epsilon}=|1-|d(x,s)-r|^+/\epsilon|^+
\end{equation}

\noindent
where $s$ is a rational point in $\riem$, $r,\epsilon \in\Q$ and $|a|^+=\max\{a,0\}$.
 These are uniformly computable  Lipschitz functions equal to $1$ in the ball $B(s,r)$, to $0$ outside $B(s,r+\epsilon)$ and with
intermediate values in between. 

Let 
\begin{equation}\label{f}
\mathcal{H}=\{\varphi_1,\varphi_2,\ldots,\}
\end{equation} 
be the smallest set of functions containing $\mathcal{H}_0$ and
the constant 1, and closed under $\max$, $\min$ and finite rational linear combinations. Clearly, 
we have:
\begin{proposition}
$\cH$ is a sufficient family of uniformly computable functions.

Moreover, the functions in $\cH$ are of the form $\varphi_n=c_n + g_n$ where $c_n$ is a constant computable from $n$,
 and $g_n$ has a bounded support, and from $n$ one can compute a bound for its diameter. 
\end{proposition}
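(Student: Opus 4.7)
The argument proceeds by simultaneous induction on the construction of $\mathcal{H}$ from the generators $\mathcal{H}_0\cup\{1\}$ via $\max$, $\min$, and finite rational linear combinations, and all three claims — uniform computability, sufficiency, and the $c_n+g_n$ decomposition — fall out once an enumeration of such formal terms is fixed.

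For uniform computability, each $\varphi_{s,r,\epsilon}\in\mathcal{H}_0$ is computable uniformly in its rational parameters: the spherical distance $d(\cdot,s)$ is a computable function on $\riem$, and the operations $|\cdot|^+$, subtraction, and division by a positive rational preserve this. The constant $1$ is trivially computable. Since $\max$, $\min$, and $\QQ$-linear combinations of uniformly computable functions are again uniformly computable, the induction yields the required uniformly computable sequence $(\varphi_n)_n$.

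For sufficiency, I would invoke the Kakutani--Stone (lattice) form of Stone--Weierstrass. The family $\mathcal{H}$ contains the constants, is a lattice under $\max$ and $\min$, and separates the points of $\riem$: for $p\neq q$, choose rational $s$ with $d(s,p)<r$ and rational $\epsilon>0$ small enough that $d(s,q)>r+\epsilon$, so that $\varphi_{s,r,\epsilon}(p)=1$ and $\varphi_{s,r,\epsilon}(q)=0$. Hence the $\RR$-vector lattice generated by $\mathcal{H}$ is uniformly dense in $C(\riem,\RR)$; approximating real coefficients by rationals transfers the density to $\mathcal{H}$ itself. A uniformly dense subfamily of $C(\riem)$ is sufficient for weak convergence by the standard three-$\epsilon$ estimate. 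This approximation-by-rationals step is the only point in the whole proposition that requires mild attention, and it is entirely routine.

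For the decomposition, I would induct on the term structure. Each $\varphi_{s,r,\epsilon}$ vanishes outside $\overline{B(s,r+\epsilon)}$, a set of spherical diameter at most $2(r+\epsilon)$, so it fits the form with $c=0$. The constant $1$ corresponds to $c=1$, $g\equiv 0$. For a rational linear combination $\alpha(c+g)+\beta(c'+g')$, the new constant is $\alpha c+\beta c'$ and the new bump $\alpha g+\beta g'$ is supported in the union of the old supports. For $\max(c+g,c'+g')$, outside $\supp(g)\cup\supp(g')$ both arguments reduce to their respective constants, so the $\max$ equals $\max(c,c')$ there; writing $\max(c+g,c'+g')=\max(c,c')+\tilde g$ places $\supp(\tilde g)$ inside the union. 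The $\min$ case is identical. At every step the new constant is an elementary computable function of the predecessor constants, and a diameter bound for the new support is the sum of the predecessor bounds, so any standard enumeration of the finite formal syntax trees over $\mathcal{H}_0\cup\{1\}$ gives an algorithm that, from $n$, extracts both $c_n$ and a diameter bound for $\supp(g_n)$.
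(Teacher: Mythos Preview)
Your argument is correct and complete; the paper itself offers no proof of this proposition, merely prefacing it with ``Clearly, we have.'' Your use of the lattice form of Stone--Weierstrass for sufficiency and the structural induction on syntax trees for both uniform computability and the $c_n+g_n$ decomposition is exactly the kind of routine verification the authors are tacitly invoking.

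One small slip worth fixing: in the inductive step you assert that ``a diameter bound for the new support is the sum of the predecessor bounds,'' but the diameter of a union $A\cup B$ can exceed $\diam(A)+\diam(B)$ when $A$ and $B$ are far apart. This does not affect the conclusion: each support sits inside an explicit finite union of closed ideal balls $\overline{B(s_i,r_i+\epsilon_i)}$ whose rational parameters are read directly off the syntax tree, so a diameter bound is computable from that data (or, more bluntly, one may just use the finite diameter of $\riem$).
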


\begin{lemma}The set $$U:=\{\mu \in \M(\riem) : \mu \text{ is not invariant with respect to }R \}$$ 
is a lower-computable open set.
\end{lemma}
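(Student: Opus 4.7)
The plan is to exploit the fact that $\cH$ is a \emph{sufficient} family of uniformly computable Lipschitz functions, and therefore separates Borel probability measures. Indeed, a measure $\mu\in\M(\riem)$ is $R$-invariant exactly when $\int\varphi\,d\mu=\int\varphi\circ R\,d\mu$ for every continuous $\varphi$; since $\cH$ is sufficient, this is equivalent to the same equality holding for every $\varphi_n\in\cH$. Consequently, $\mu\in U$ if and only if there exist $n,k\in\NN$ such that
\[
\left|\int \varphi_n\,d\mu-\int \varphi_n\circ R\,d\mu\right|>2^{-k}.
\]
Writing $U_{n,k}$ for the set of $\mu$ satisfying this strict inequality, we have $U=\bigcup_{n,k}U_{n,k}$, so it suffices to check that each $U_{n,k}$ is a lower-computable open subset of $\M(\riem)$, uniformly in $n$ and $k$.

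Next I would verify the uniform computability of the functional
\[
\Phi_n(\mu):=\int \varphi_n\,d\mu-\int \varphi_n\circ R\,d\mu.
\]
The sequence $(\varphi_n)_n$ is uniformly computable by construction, and the map $R:\riem\to\riem$ is computable from the oracle $\phi$ for its coefficients, so the composed sequence $(\varphi_n\circ R)_n$ is uniformly computable as well (composition of uniformly computable functions is uniformly computable). Proposition~\ref{t.comp-integrals} then yields that both integral operators $\mu\mapsto\int\varphi_n\,d\mu$ and $\mu\mapsto\int\varphi_n\circ R\,d\mu$ are uniformly computable in $n$ on $\M(\riem)$, and hence so is their difference $\Phi_n$ and the absolute value $|\Phi_n|$.

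The strict inequality $|\Phi_n(\mu)|>2^{-k}$ then defines a lower-computable open set uniformly in $(n,k)$: this is the standard fact that, for a computable function $F:X\to\RR$ and a lower-computable $\alpha\in\RR$, the preimage $\{F>\alpha\}$ is lower-computable open, which follows directly from the definition of computability of $F$ and the lower-computability of $(\alpha,\infty)$. A countable union of uniformly lower-computable open sets is again lower-computable open, so $U=\bigcup_{n,k}U_{n,k}$ is of the required form.

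The only subtle step is the passage from ``not invariant'' to ``there is a witnessing $\varphi_n\in\cH$ with strict inequality'', which is where sufficiency of $\cH$ is essential; without it we might have $R_*\mu\neq\mu$ yet $\int\varphi_n\,d\mu=\int\varphi_n\circ R\,d\mu$ for all $n$. The remaining ingredients are routine applications of Proposition~\ref{t.comp-integrals} and the stability of lower-computable openness under countable unions and preimages by computable functions.
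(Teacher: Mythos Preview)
Your proposal is correct and follows essentially the same approach as the paper: both use sufficiency of $\cH$ to reduce non-invariance to the existence of some $\varphi_n\in\cH$ with $\int\varphi_n\,d\mu\neq\int\varphi_n\circ R\,d\mu$, and then invoke Proposition~\ref{t.comp-integrals} to make this strict inequality semi-decidable. Your version is more explicit (introducing the threshold $2^{-k}$ and the decomposition $U=\bigcup_{n,k}U_{n,k}$), but the underlying argument is identical.
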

\begin{proof}
We show that $U$ is lower-computable open by exhibiting an algorithm to 
semi-decide whether a probability measure $\mu$ belongs to $U$.  
By Proposition \ref{t.comp-integrals} the numbers 
$$\int \varphi_{i}\, d\mu\text{ and }\int \varphi_{i}\circ R\, d\mu$$
 are uniformly computable. If $\mu$ is not invariant, 
then there exist $j$ such that 
$$\int g_{i}\, d\mu \neq \int g_{i}\circ R \,d\mu$$ and such a $j$ can be found. 
\end{proof}

\begin{lemma}\label{l.isolating}
 The set $$V:=\{\mu \in \M(\riem) : \mu \text{ is not balanced with respect to }R \}$$
 is a lower-computable open set.  
\end{lemma}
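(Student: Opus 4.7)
My plan is to reduce the balanced condition to a countable family of equalities between computable integrals, mirroring the invariance argument from the preceding lemma but replacing pullback by the normalized transfer (Ruelle) operator
$$(\mathcal{L}_R f)(z) = \frac{1}{d}\sum_{w\in R^{-1}(z)} f(w),$$
where preimages are counted with multiplicity. First I would record that $\mu$ is balanced if and only if $\int f\, d\mu = \int \mathcal{L}_R f\, d\mu$ for every continuous $f$; one direction is immediate from the definition of $\mathcal{L}_R$, while the other comes from a short calculation applied to functions supported on small balls $U$ whose $d$ univalent preimages $U_1,\ldots,U_d$ miss the critical set, where balancedness forces $\mu(U_i) = \mu(U)/d$. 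Since the linear span of the family $\mathcal{H}$ from $(\ref{f})$ is dense in $C(\riem)$, this characterization reduces to checking countably many identities: $\mu$ lies in $V$ if and only if there exists $i$ with $\int \varphi_i\, d\mu \neq \int \mathcal{L}_R \varphi_i\, d\mu$.

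Next I would verify that, from the oracle for the coefficients of $R$ and a test function $\varphi_i \in \mathcal{H}$, the function $\mathcal{L}_R\varphi_i:\riem\to\RR$ is computable, uniformly in $i$. For each $z$ the multiset $R^{-1}(z)$ consists of the roots of the degree-$d$ polynomial $P(w)-z\,Q(w)$, whose coefficients are computable from $z$ and from the oracle for $R$. Standard polynomial root-finding produces the multiset of roots to arbitrary precision, and the symmetric sum $\sum_{w\in R^{-1}(z)} \varphi_i(w)$ then depends computably and continuously on $z$.

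Finally I would assemble the algorithm. By Proposition~\ref{t.comp-integrals} the reals $\int \varphi_i\, d\mu$ and $\int \mathcal{L}_R\varphi_i\, d\mu$ are computable uniformly in $i$ and in the oracle for $\mu$. Inequality of two computable reals is semi-decidable: compute each to precision $2^{-n}$ and wait for the rational approximations to separate by more than $2 \cdot 2^{-n}$. Hence each $V_i := \{\mu : \int \varphi_i\, d\mu \neq \int \mathcal{L}_R\varphi_i\, d\mu\}$ is a uniformly lower-computable open subset of $\M(\riem)$, and $V=\bigcup_i V_i$ is lower-computable open as required.

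The main obstacle I anticipate is the collision of preimages at critical values of $R$: any attempt to track individual branches of $R^{-1}$ as computable continuous functions must fail because no continuous global branch selection exists across a critical value. The resolution is to notice that $\mathcal{L}_R$ only ever sees the unordered multiset of roots, so a single global root-finding routine suffices and $\mathcal{L}_R\varphi_i$ is continuous throughout $\riem$, including at critical values.
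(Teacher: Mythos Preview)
Your route differs from the paper's: the paper works branch by branch, enumerating ideal balls $B_i$ on which $R$ is univalent (using Koebe $1/4$ to size them), computing the single inverse branch $R_i^{-1}:R(B_i)\to B_i$, and then testing whether $\int \varphi\circ R_i^{-1}\,d\mu = d\int \varphi\,d\mu$ for test functions $\varphi$ supported in $B_i$. You instead package all $d$ branches at once into the transfer operator and test $\int\varphi_i\,d\mu=\int\mathcal L_R\varphi_i\,d\mu$ globally; the computability of $\mathcal L_R\varphi_i$ via root-finding is fine, with the symmetric sum absorbing the branch-collision issue you anticipate (modulo some bookkeeping in the chart at $\infty$ when the leading coefficient of $P(w)-zQ(w)$ degenerates).

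There is, however, a genuine gap in the characterization step. The implication ``$\mathcal L_R^{*}\mu=\mu\Rightarrow\mu$ balanced'' fails whenever $R$ has an exceptional point, i.e.\ a point $p$ with $R^{-1}(p)=\{p\}$ set-theoretically. For $R(z)=z^d$ and $\mu=\delta_0$ one has $R^{-1}(0)=\{0\}$ with multiplicity $d$, so $\mathcal L_R f(0)=\tfrac{1}{d}\cdot d\, f(0)=f(0)$ and hence $\mathcal L_R^{*}\delta_0=\delta_0$; yet $\delta_0$ is not balanced, since already $X=\{0\}$ gives $\mu(R(X))=1\neq d=d\,\mu(X)$. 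The multiplicity weighting in $\mathcal L_R$ is precisely what defeats the Jacobian condition at a totally ramified fixed point, and your ``short calculation'' only recovers balancedness on sets avoiding the critical locus---nothing in it forbids an atom sitting exactly there. Thus your algorithm semi-decides only the strictly smaller set $\{\mu:\mathcal L_R^{*}\mu\neq\mu\}\subsetneq V$. (It is worth noting that the paper's branch-by-branch test shares this blind spot: its univalence balls $B_i$ necessarily miss the critical points, so $\delta_0$ passes that test as well.)
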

\begin{proof}
To semi-decide whether a measure $\mu$ is not balanced, we start by enumerating all the ideal points $z_{i}$ in   $\hat{\mathbb{C}}$ 
which are not critical for $R$.  For each $z_{i}$, we can compute an ideal ball $B_{i}=B(z_{i},r)$ such that $R|_{B_{i}}$ is injective. 
Denote  $\text{Crit}_{R}$ is the set of critical values of $R$.  
Compute a rational number $q$ such that $$0<q<\dist(R(z_{i}),\text{Crit}_{R}).$$
The function $R$ has a conformal inverse branch
$R_{i}^{-1}$ on $B(R(z_{i}),q)$. Compute any rational number
$r$ such that 
$$0<r<\frac{1}{4R'(z_i)}.$$
By Koebe 1/4 Theorem, 
$$B_i=B(z_i,r)\subset R_{i}^{-1}(B(R(z_{i}),q)),$$
so that $R$ is conformal on $B_i$.

 Now, for each $B_{i}$, let $(\varphi^{i}_{j})_{j}$ be the list of test functions supported on $B_{i}$.  
If $\mu$ is not balanced, then there exists $i$ such that 
$$\mu(R(B_{i}))\neq d\cdot\mu(B_{i})$$
 which means that there exists $l$ such that  
$$\int \varphi^{i}_{l}\circ R_{i}^{-1}d \mu \neq d\cdot \int \varphi^{i}_{l} d\mu.$$
By Proposition \ref{t.comp-integrals}, the numbers  $\int \varphi^{i}_{j}\circ R_{i}^{-1}d \mu$ and $d \int \varphi^{i}_{j} d\mu$ 
are uniformly computable, and thus $l$ can be found. 
\end{proof}

It follows that the open set $\mathcal{U}=U\cup V$ of measures which are either not invariant or not balanced is 
lower-computable with an oracle $\phi$.  Its complement is  the singleton $\{\lambda_{R}\}$. 
To compute $\lambda_{R}$ with precision $2^{-n}$, enumerate all the ideal balls $B_{n}\subset \M(\riem)$ of radius $2^{-n}$ 
and semi-decide  the relation $\{\lambda_{R}\}\subset B_{n}$. This is possible because 
$$\{\lambda_{R}\}\subset B_{n}\iff \M(\riem)\subset \mathcal{U}\cup B_{n},$$ and the last relation is semi-decidable by 
Proposition \ref{compacity}. 

\end{proof}

\subsection{A comparison of rates of convergence.}

In \cite{BY-MMJ} two of the authors have shown:
\begin{thm}\label{BY-thm}
There exists a computable quadratic polynomial  $f_c(z)=z^2+c$ whose Julia set $J_c$ is not computable.
 \end{thm}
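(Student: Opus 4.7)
\smallskip
\noindent
\textbf{Proof plan (for \thmref{BY-thm}).}

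The plan is to construct a computable parameter $c\in\CC$ such that the Julia set $J_c$ is lower-computable (as guaranteed by Example \ref{Julia-sets-example} for every computable rational map) but fails to be upper-computable. Equivalently, there is no algorithm enumerating ideal balls $B\Subset F(f_c)$ whose union exhausts the Fatou set.

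The first step is to work inside the family of polynomials with an indifferent fixed point: for each $\theta\in\RR$, there is a unique $c=c(\theta)\in\CC$ such that $f_c(z)=z^2+c$ is affinely conjugate to the map $\mu z+z^2$ with $\mu=e^{2\pi i\theta}$. The map $\theta\mapsto c(\theta)$ is a computable function, so if $\theta$ is a computable real then so is $c(\theta)$. The second step is to choose $\theta$ as a Brjuno number, so that by Siegel's theorem $f_{c(\theta)}$ has a Siegel disk $\Delta$ around the indifferent fixed point. The conformal radius $r(\theta):=r(\Delta,0)$ is then strictly positive, but by work of Yoccoz and others one has explicit estimates tying $\log r(\theta)$ to the Brjuno sum of $\theta$, so $r(\theta)$ varies wildly and discontinuously with $\theta$.

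The heart of the argument is to construct, using this sensitivity, a computable $\theta$ encoding undecidable information in the geometry of $\Delta$. One takes $\theta$ of the form $\theta=\sum_{i} 2^{-n_i}$ where $n_i$ is chosen, step by step, to force the Brjuno sum (and hence $\log r(\theta)$) to stabilize near a target whose binary expansion encodes the Halting set $H$. Because the perturbations $2^{-n_i}$ are computably summable, $\theta$ itself is a computable real; but recovering the individual $n_i$ from $\theta$ is equivalent to deciding $H$. The obstruction to upper-computability of $J_c$ then materializes as follows: any algorithm enumerating open balls in $F(f_c)$ must, in particular, eventually certify an open ball that approximates the Siegel disk from inside to accuracy $2^{-n}$. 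From such an algorithm one could read off $r(\theta)$ to any desired precision, and therefore decide the Halting problem — a contradiction.

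The main obstacle is this last reduction: to ensure that approximating $F(f_c)$ from inside really does reveal the conformal radius of $\Delta$. The crucial geometric input is that for quadratic Siegel polynomials the critical orbit accumulates on $\partial\Delta$ (in fact $0\in\partial\Delta$ for suitable rotation numbers), which pins down the Siegel disk geometrically and forbids any ``extra'' Fatou component near $\Delta$. Combined with the sharp lower bounds on $r(\theta)$ in terms of the Brjuno function, this makes approximations of $\Delta$ from inside quantitatively equivalent to approximations of $r(\theta)$ from below, which by construction encodes $H$. The remaining routine verifications are that $c(\theta)$ is computable and that the inside-approximation property is indeed a consequence of upper-computability of $J_c$ via \propref{p.comp-closed} applied to the complement.
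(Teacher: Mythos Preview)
The paper does not itself prove \thmref{BY-thm}; it is quoted from \cite{BY-MMJ} (with a fuller treatment in \cite{BY-book}) and used only as input to the ``incommensurability of rates'' corollary. So there is no proof in the paper to compare against.

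Your outline is in the right neighbourhood: the construction in \cite{BY-MMJ,BY-book} does go through quadratic Siegel polynomials $P_\theta(z)=e^{2\pi i\theta}z+z^2$ and exploits the Yoccoz/Buff--Ch\'eritat relation between the Brjuno function $\Phi(\theta)$ and the conformal radius $r(\theta)$ of the Siegel disk. But two of your steps are genuine gaps rather than routine verifications.

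First, the construction of $\theta$. You propose to choose the digits of $\theta$ so that the Brjuno sum ``stabilizes near a target whose binary expansion encodes $H$.'' One can indeed control $\Phi(\theta)$ through the continued fraction of $\theta$, but Yoccoz's inequality only says $\log r(\theta)+\Phi(\theta)$ is bounded, and the Buff--Ch\'eritat refinement makes this sum continuous, not explicit. You therefore cannot prescribe the bits of $r(\theta)$ directly. What the actual construction does is arrange, by a diagonal argument against all putative algorithms, that $r(\theta)$ (equivalently $\Phi(\theta)$) is a non-computable real; it does not aim at a specific target encoding $H$.

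Second, and more seriously, is your reduction ``$J_c$ computable $\Rightarrow$ $r(\theta)$ computable.'' The mechanism you invoke --- that the critical point lies on $\partial\Delta$, so an inside approximation of $F(f_c)$ pins down the conformal radius --- does not work. Having the critical point on $\partial\Delta$ is a theorem (Ghys, Herman) for rotation numbers of \emph{bounded type}, whereas the $\theta$ needed here must have enormous partial quotients (that is exactly what makes $\Phi(\theta)$ non-computable), so the two requirements are in direct conflict. Even when the critical point is on the boundary, this at best gives the inner radius, which is tied to the conformal radius only up to Koebe distortion --- not enough to transfer non-computability. The correct reduction is different: from computability of $J_c$ one extracts computability of the Siegel disk $\Delta$ as a simply connected domain (it is the Fatou component of the computable fixed point), and then invokes a computable Riemann mapping theorem to conclude that $r(\theta)$ is a computable real. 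That implication is itself nontrivial and is where much of the effort in \cite{BY-MMJ} goes.
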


Together with Theorem A this statement has the following amusing consequence:
\begin{thm}[\bf Incommensurability of rates of convergence]
For a polynomial $f_c$ and $z\in \CC$ denote
$$\phi_1(n)=\dist_H((f_c)^{-n}(z), J_c)\text{ and }\phi_2(n)=W_1\left(\frac{1}{2^{n}}\sum_{w\in (f_c)^{-n}(z)} \delta_w\; ,\lambda_c\right),$$
where $\lambda_c$ is the Brolin-Lyubich measure of $f_c$. Even though both $\phi_1$ and $\phi_2$ converge to $0$ as $n\to\infty$,
there exists a parameter $c$ such that there {\bf does not exist any computable function} $F:\RR\to\RR$ such that $F(0)=0$ and
$$\phi_1(n)\leq F\circ \phi_2(n).$$

\end{thm}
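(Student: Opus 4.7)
The plan is to derive a contradiction with \thmref{BY-thm}. I would choose $c$ as in that theorem, so that $f_c$ is computable but the Julia set $J_c$ is not computable, and take as base point $z:=z_0$ any computable repelling fixed point $z_0\in J_c$ of $f_c$. Such a $z_0$ exists because, by the Fatou--Shishikura bound, at most one of the two finite fixed points of $f_c$ can be non-repelling, and its computability follows exactly as in the proof sketch of Example~\ref{Julia-sets-example}. By complete invariance $(f_c)^{-n}(z_0)\subset J_c$, so $\phi_1(n)=\sup_{\zeta\in J_c}\dist\bigl(\zeta,(f_c)^{-n}(z_0)\bigr)$.

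The first step is to verify that $\phi_2(n)$ is a computable real uniformly in $n$. The coefficients of the polynomial $f_c^n(w)-z_0$ are computable uniformly in $n$, and standard computable root-finding produces the multiset $(f_c)^{-n}(z_0)$ as a uniformly computable point of $\text{Comp}(\riem)$. Consequently the empirical measure $\mu_n:=2^{-n}\sum_{w\in(f_c)^{-n}(z_0)}\delta_w$ is a uniformly computable point of $\M(\riem)$. Combined with the computability of $\lambda_c$ supplied by Theorem~A and the computability of $W_1$ on $\M(\riem)\times\M(\riem)$, this yields a uniform algorithm computing $\phi_2(n)=W_1(\mu_n,\lambda_c)$.

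Assuming, toward a contradiction, that a computable $F\colon\RR\to\RR$ with $F(0)=0$ and $\phi_1(n)\le F(\phi_2(n))$ exists, I would use that computable real functions are continuous and that $\phi_2(n)\to 0$ by the Brolin--Lyubich convergence \eref{convergence-meas} to conclude that $\eps_n:=F(\phi_2(n))+2^{-n}$ is a positive, uniformly computable sequence tending to $0$ that dominates $\phi_1(n)$. Since $(f_c)^{-n}(z_0)\subset J_c$ and $\phi_1(n)\le\eps_n$, the preimage set is $\eps_n$-dense in $J_c$, so the uniformly computable finite unions of closed balls
\[
K_n:=\bigcup_{w\in(f_c)^{-n}(z_0)}\overline{B(w,\eps_n)}
\]
satisfy $J_c\subset K_n\subset U_{2\eps_n}(J_c)$, hence $\dist_H(K_n,J_c)\le 2\eps_n\to 0$ in a uniformly computable way. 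This would make $J_c$ a computable point of $\text{Comp}(\riem)$, contradicting the choice of $c$.

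The main obstacle I anticipate is the uniform computability of the empirical measures $\mu_n$, namely the assertion that the root multiset of a polynomial with uniformly computable coefficients is a uniformly computable point of $\text{Comp}(\riem)$. This is a standard consequence of the continuous dependence of the root multiset on the coefficients, and it is the only computability input beyond Theorem~A and the material of the excerpt.
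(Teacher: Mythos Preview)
Your proposal is correct and follows essentially the same approach as the paper: choose $c$ as in \thmref{BY-thm}, use Theorem~A to get computability of $\lambda_c$ and hence of $\phi_2(n)$, and argue that a computable $F$ with $\phi_1(n)\le F(\phi_2(n))\to 0$ would yield a Hausdorff approximation scheme for $J_c$, contradicting non-computability. The paper's proof is a three-sentence sketch that leaves the choice of $z$ implicit and omits the details you supply (uniform computability of the empirical measures $\mu_n$ via root-finding, and the explicit $K_n$ construction); your specification of $z$ as a computable repelling fixed point in $J_c$ is a clean way to make the Hausdorff-distance estimate one-sided, and the added $2^{-n}$ slack is harmless.
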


\begin{proof}
If $c$ is computable, Theorem A implies the computability of $\lambda_c$. Hence, $\phi_2(n)$ is a computable function. On the other hand,
if there exists a computable bound $\phi_1(n)\underset{n\to\infty}{\longrightarrow}0$, then $J_c$ is a computable set. Therefore, such a bound
cannot exist for a parameter $c$ as in \thmref{BY-thm}.

\end{proof}

As an illustration, consider Figure \ref{fig-measure}. The Julia set of a quadratic polynomial is rendered in gray. This particular polynomial
can be written in the form $P_\theta(z)=z^2+\exp(2\pi i\theta)z$ for $\theta=(\sqrt{5}+1)/2$. The preimage $(P_\theta)^{-12}(z)$ 
(highlighted in black) for a point $z\in J(P_\theta)$
gives an excellent approximation of $\lambda$, but a very poor approximation of the whole Julia set.

\begin{figure}[ht]
\centerline{\includegraphics[width=0.7\textwidth]{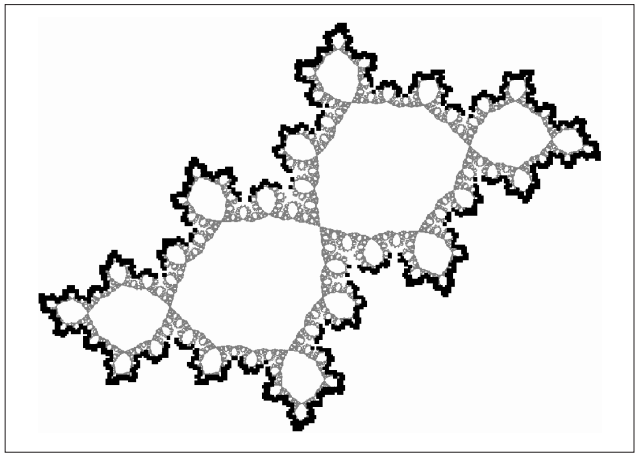}}
\caption{The Julia set of $P_\theta(z)=z^2+\exp(2\pi i\theta)z$ for $\theta=(\sqrt{5}+1)/2$. The set $(P_\theta)^{-12}(z)$ is indicated in black for a point 
$z\in J(P_\theta)$.
\label{fig-measure}
}
\end{figure}

\subsection{Proof of Theorem B} 

For a given point $z\in \CC$, set $$\lambda_{z,m}=\frac{1}{d^m}\sum_{w\in R^{-m}(a)}\delta_{w}.$$  The following result is due to 
Drasin and Okuyama \cite{okuyama}, and, in more generality, to Dinh and Sibony  \cite{DinhSib}.


\begin{theorem}[\cite{okuyama,DinhSib}]
For each $R$ there are constants $\alpha=\alpha(R)$ and $A=A(R)$ such that 
for every point $z\in\riem$, except at most two, and for every $f\in 1-Lip$ the following holds:
$$
\left| \int f\, d\lambda_{z,m} - \int f\, d\lambda \right| < A \alpha^{-m}.
$$
Note that  $A$ and $\alpha$ are independent of $n, z$ and $f$. 
\end{theorem}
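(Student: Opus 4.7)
The plan is to reduce the theorem to a uniform exponential sup-norm convergence of certain potentials, and then transfer the estimate to $1$-Lipschitz observables by mollification. For a base point $a\in\riem$ outside the exceptional set of $R$, set
\[
u_{a,m}(w) \;:=\; d^{-m}\log |R^m(w)-a|_\sigma,
\]
where $|\cdot|_\sigma$ denotes chordal distance on $\riem$. Then $(2\pi)^{-1}\Delta u_{a,m}$ equals $\lambda_{a,m}$ up to a smooth spherical-curvature background term, and the pointwise limit $u_a:=\lim_m u_{a,m}$ is a continuous ``dynamical Green potential'' whose Laplacian bears the same relation to $\lambda$. Thus, in the sense of distributions on $\riem$,
\[
\lambda_{a,m}-\lambda \;=\; \frac{1}{2\pi}\,\Delta(u_{a,m}-u_a).
\]

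The heart of the argument is the following uniform sup-norm estimate: there exist $A_0=A_0(R)$ and $\alpha_0=\alpha_0(R)>1$ such that whenever $a$ lies at a fixed positive distance from the (at most two) exceptional points,
\[
\|u_{a,m}-u_a\|_{L^\infty(\riem)} \;\leq\; A_0\,\alpha_0^{-m}.
\]
I would prove this by realizing the iteration $u_{a,m}\mapsto u_{a,m+1}$ as an action of the normalized pull-back operator $\Lambda:=d^{-1}R^*$ on an appropriate Banach space of potentials, and then exhibiting a spectral gap for $\Lambda$ on that space. In the Dinh--Sibony approach the space is the class \emph{DSH} of differences of quasi-plurisubharmonic functions; in Drasin--Okuyama one works directly with logarithmic potentials on $\riem$. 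Uniform distortion bounds for rational maps, combined with the non-exceptionality of $a$, ensure that the moving logarithmic singularity of $\log|R^m-a|_\sigma$ dissipates under iteration at a uniform rate; near a super-exceptional $a$ the singularity would remain concentrated on its orbit and the estimate would fail. This is the technical main obstacle of the entire proof.

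Granted the potential bound, the Lipschitz statement follows by a standard mollification. Let $f_\eps:=f*\rho_\eps$ where $\rho_\eps$ is a smooth mollifier on $\riem$ at scale $\eps$; since $f$ is $1$-Lipschitz, one has $\|f-f_\eps\|_\infty\leq C\eps$ and $\|\Delta f_\eps\|_{L^1(\riem)}\leq C/\eps$. Standard integration by parts on the sphere then gives
\[
\Bigl|\int f_\eps\,d(\lambda_{a,m}-\lambda)\Bigr|
\;=\;\frac{1}{2\pi}\Bigl|\int (u_{a,m}-u_a)\,\Delta f_\eps\, dA\Bigr|
\;\leq\;\frac{C A_0}{\eps}\,\alpha_0^{-m},
\]
where $dA$ denotes the spherical area element. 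Combining this with the approximation error $|\int(f-f_\eps)\,d\nu|\leq C\eps$ valid for every probability measure $\nu$, and optimizing via $\eps=\alpha_0^{-m/2}$, yields the bound in the theorem with $\alpha=\sqrt{\alpha_0}$ and a suitable $A=A(R)$; the at-most-two excluded base points $a$ are precisely the exceptional points of $R$.
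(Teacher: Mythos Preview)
The paper does not supply a proof of this theorem at all: it is quoted verbatim as a result of Drasin--Okuyama and Dinh--Sibony and then used as a black box to obtain the time bound in Theorem~B. So there is no ``paper's own proof'' to compare your attempt against.

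That said, your sketch is a fair outline of the approach actually taken in those references. The representation of $\lambda_{a,m}-\lambda$ as the distributional Laplacian of a difference of logarithmic potentials, the reduction to a uniform $L^\infty$ bound on $u_{a,m}-u_a$, and the mollification step with the choice $\eps\sim\alpha_0^{-m/2}$ are all in the right spirit; your estimate $\|\Delta f_\eps\|_{L^1}\le C/\eps$ is correct once one uses the $1$-Lipschitz hypothesis to trade one derivative onto $f$. You are also honest that the sup-norm potential estimate is the entire content of the theorem and that you are not proving it here; that is exactly where the work in \cite{okuyama,DinhSib} lies, and a genuine proof would have to carry it out.

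One point to flag: you write that $A_0$ is uniform only once $a$ stays at a fixed positive distance from the exceptional set, whereas the theorem as stated in the paper asserts that $A$ is independent of $z$. In the cited references the uniformity in the base point is indeed delicate near exceptional values, and the cleanest statements fix a compact set of admissible base points. This does not affect the application to Theorem~B (one simply chooses a single convenient $z$), but if you were writing a self-contained proof you would need to address it.
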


We have then that, taking $m=C n$ for some constant $C=C(A,\alpha)$, 
$$
W_{1}(\lambda_{z,m},\lambda)<2^{-n}.
$$
 
So that in order to compute a $2^{-n}$-approximation of $\lambda_{R}$, it is enough to compute approximations of the $d^{m}$ pre-images of $z$ by $R^m$. Since each pre-image can be computed in time polynomial in $d^m$, the entire computation can be achieved in time $O(2^{c n})$ for a $c=c(R)$.

 \subsection{A counter-example}
In view of the above results, it is natural to ask whether a measure of maximal entropy of a computable dynamical system is 
{\it always} computable.
The example below will show that this need not be the case. We will construct a map 
$$T:S^1\times S^1\to S^1\times S^1$$
with the following properties:
\begin{itemize}
\item[(a)] $T$ is a computable function;
\item[(b)] $T$ has a measure of maximal entropy;
\item[(c)] {\it every} measure of maximal entropy of $T$ is non-computable.
\end{itemize}

We first recall a construction \cite{GalHoyRoj09}:
\begin{prop}[\cite{GalHoyRoj09}]
\label{bomb}
There exists a computable transformation $T_1:S^1\to S^1$ for which {\it every} invariant measure is non-computable. 
\end{prop}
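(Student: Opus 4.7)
The strategy is to build a uniquely ergodic computable map $T_1\colon S^1\to S^1$ whose unique invariant probability measure encodes the halting problem, hence is non-computable; unique ergodicity then reduces ``every invariant measure'' to that single one.

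Fix a computable irrational $\alpha\in(0,1)$ and a computable base point $x_0\in S^1$, so that the $R_\alpha$-orbit $O=\{R_\alpha^n(x_0):n\in\ZZ\}$ is dense in $S^1$. Let $\varphi\colon\ZZ\to\NN$ be a computable bijection and, using the standard enumeration $(\A_i)$ of all algorithms, set
\[
\ell_n\;=\;\begin{cases}2^{-\varphi(n)^2}& \text{if } \A_{\varphi(n)} \text{ halts,}\\ 0 & \text{otherwise,}\end{cases}
\]
so that $(\ell_n)$ is lower-semicomputable uniformly in $n$. I would then perform the classical Denjoy blow-up, replacing each $R_\alpha^n(x_0)$ by an open interval $G_n$ of length $\ell_n$, rescaling the resulting circle of total length $1+\sum_n\ell_n$ back to unit circumference, and defining $T_1$ to act as $R_\alpha$ on the residual invariant Cantor set $\Lambda=S^1\setminus\bigcup_n G_n$ via the natural monotone semiconjugacy $h\colon S^1\to S^1$, extending $T_1$ affinely on each gap with $T_1(G_n)=G_{n+1}$.

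Three things would then have to be verified. \textbf{Computability of $T_1$:} the super-geometric decay of $\ell_n$ guarantees that, for any required precision $2^{-k}$, all but finitely many gaps are negligible, and the few relevant ones can be approximated by simulating the corresponding $\A_{\varphi(n)}$ for enough steps; a continuity (cushioning) argument covers the case of $x$ near a gap boundary. \textbf{Unique ergodicity:} the relation $h\circ T_1=R_\alpha\circ h$ pushes every $T_1$-invariant probability measure forward to an $R_\alpha$-invariant probability, which must be Lebesgue by unique ergodicity of the rigid rotation; atomic invariant measures are excluded because neither the bi-infinite gap orbit nor any orbit in $\Lambda$ is periodic, so the unique invariant measure $\mu$ is the pullback of Lebesgue under $h$, supported on $\Lambda$. \textbf{Non-computability of $\mu$:} by Proposition~\ref{t.comp-measure}, a computable $\mu$ would make $\mu(B)$ uniformly lower-computable over ideal open balls $B$; choosing balls that isolate each gap $G_n$ in turn and taking differences of nested balls, one extracts the values $\ell_n$ from the measure data and decides the halting set, contradicting its non-recursiveness.

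The main obstacle is the computability of $T_1$ in the pointwise sense: the phase space itself is built from the lower-semicomputable lengths $\ell_n$, and the rescaling factor $1/(1+\sum_n\ell_n)$ is likewise only lower-semicomputable, so one must carefully track how finite-stage approximations $\ell_n^{(m)}\nearrow\ell_n$ propagate through the rescaling and the affine gap maps to give errors in $T_1(x)$ that decay with the precision parameter. The rapid decay $\ell_n\le 2^{-\varphi(n)^2}$ furnishes the head-room required, but the quantitative control near gap boundaries demands explicit Lipschitz bounds on $T_1$ in terms of the partial sums of the $\ell_n$; once this bookkeeping is done, the remaining ergodic-theoretic and measure-theoretic steps follow standard Denjoy and computable-analysis templates.
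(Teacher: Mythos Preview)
Your Denjoy blow-up plan has a structural flaw that breaks the construction before any of the computability questions arise. In a Denjoy-type map the gap orbit must be complete: continuity of $T_1$ at the endpoints of $G_n$ forces $T_1(\overline{G_n})=\overline{G_{n+1}}$ as an orientation-preserving affine bijection, hence $\ell_n>0\Longleftrightarrow\ell_{n+1}>0$ for every $n\in\ZZ$. With your choice $\ell_n>0\iff\cA_{\varphi(n)}$ halts, the index set $\{n:\ell_n>0\}=\varphi^{-1}(H)$ is neither empty nor all of $\ZZ$. Whenever $\ell_n>0$ and $\ell_{n+1}=0$ the map collapses an interval to a point (so is not injective); whenever $\ell_n=0$ and $\ell_{n+1}>0$ the one-sided limits of $T_1$ at the corresponding point are the two distinct endpoints of $G_{n+1}$, so $T_1$ is not even continuous.

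Even if you repair this by taking all $\ell_n>0$ (encoding $H$ in, say, the ratio $\ell_{n}/\ell_{n+1}$), two of your three verification steps remain problematic. For computability of $T_1$: the total length $L=1+\sum_n\ell_n$ is only lower-semicomputable, and every coordinate in the rescaled circle, including $T_1(x)$ for a fixed rational $x$, is an expression of the form $\bigl(\text{sum of some }\ell_m\bigr)/L$. The ``bookkeeping'' you invoke would have to show that such quotients are computable reals, which in general they are not. For non-computability of $\mu$: the measure $\mu$ gives mass zero to every gap regardless of $\ell_n$, and the \emph{position} of $G_n$ in the rescaled circle depends on the full non-computable family $(\ell_m)_m$, so one cannot ``isolate'' $G_n$ with ideal balls. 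From computability of $\mu$ one gets $h^{-1}_-(y)$ lower-semicomputable and $h^{-1}_+(y)$ upper-semicomputable for computable $y$, but their difference (the gap length) is then neither, and no decision procedure for $H$ falls out.

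The paper avoids all of this with a much more direct construction. It first produces, via Proposition~\ref{lco}, a closed set $K\subset S^1$ containing \emph{no} computable point, as the complement of a lower-computable open set covering every computable real. It then builds a computable continuous map $T_1$ that fixes $K$ pointwise and pushes every point of $S^1\setminus K$ monotonically toward $K$; this is just an explicit convex series of piecewise-linear bumps on the enumerated complementary intervals. Any $T_1$-invariant probability is then supported on $K$, and by Proposition~\ref{comp points} a computable measure would have a computable point in its support, a contradiction. No unique ergodicity, no semiconjugacy, and no rescaling are needed.
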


To prove this, we need the following  facts: 
\begin{prop}
\label{comp points}
Let $\mu$ be a computable Borel measure on a computable metric space $X$. Then the support of $\mu$ contains a computable point $x\in X$.
\end{prop}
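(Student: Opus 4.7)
The plan is to algorithmically construct a nested sequence of ideal balls $B_{n_1}\supset \overline{B_{n_2}}\supset B_{n_2}\supset \overline{B_{n_3}}\supset \cdots$ whose radii shrink to zero and which all have positive $\mu$-measure; the unique point of their intersection will be a computable point in $\supp(\mu)$.

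First, I would observe that thanks to Proposition~\ref{t.comp-measure} the quantity $\mu(B_i)$ is lower-computable uniformly in $i$, so the relation ``$\mu(B_i)>0$'' is semi-decidable uniformly in $i$. Also, given two ideal balls $B_i=B(s_i,r_i)$ and $B_j=B(s_j,r_j)$ with rational radii, the relation $d(s_i,s_j)+r_i<r_j$, which is a sufficient condition for $\overline{B_i}\subset B_j$, is semi-decidable since $d(s_i,s_j)$ is a computable real uniformly in $(i,j)$.

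Second, I would describe the search. Pick any ideal ball $B_{n_1}$ of radius $<1/2$ with $\mu(B_{n_1})>0$; such a ball exists and can be found by parallel semi-decision procedures, since $X$ is covered by countably many ideal balls of radius $<1/2$ and $\mu(X)>0$. Inductively, suppose $B_{n_k}$ has been constructed. Run in parallel, over all indices $j$, the three semi-decision procedures for (i) $r_j<r_{n_k}/2$, (ii) $d(s_j,s_{n_k})+r_j<r_{n_k}$, (iii) $\mu(B_j)>0$; take the first $j$ for which all three halt, and set $n_{k+1}=j$. The key point is that this search always terminates: for any $y\in B_{n_k}$ one can choose an ideal point $s$ close enough to $y$ and a rational radius $q$ small enough so that $B(s,q)$ satisfies (i) and (ii) and contains $y$. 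Hence the ideal balls satisfying (i) and (ii) cover $B_{n_k}$, and since $\mu(B_{n_k})>0$ at least one of them must have positive measure.

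Third, the sequence of centers $(s_{n_k})$ is Cauchy because $d(s_{n_k},s_{n_{k+1}})<r_{n_k}<2^{-k}$, and its limit $x$ satisfies $d(x,s_{n_k})\le r_{n_k}<2^{-k}$. Since $r_{n_k}$ is rational and computable from $k$, outputting $s_{n_k}$ with $r_{n_k}<2^{-n}$ gives the required $2^{-n}$-approximation; so $x$ is a computable point. Finally, $x\in\supp(\mu)$: any neighborhood $U$ of $x$ contains $B(x,\varepsilon)$ for some $\varepsilon>0$, and for $k$ with $2r_{n_k}<\varepsilon$ one has $B_{n_k}\subset B(x,2r_{n_k})\subset U$, so $\mu(U)\ge \mu(B_{n_k})>0$.

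The only non-trivial step is the termination of the inductive search, which rests on the elementary but essential observation that the ideal balls with rational radii bounded by $r_{n_k}/2$ and closures inside $B_{n_k}$ still cover $B_{n_k}$; everything else is a routine application of semi-decidability of lower-computable real inequalities.
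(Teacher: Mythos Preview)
Your proposal is correct and takes essentially the same approach as the paper's (sketched) proof: both build an exponentially shrinking nested sequence of ideal balls of positive measure via exhaustive search and output the limit of the centers. The only cosmetic differences are that the paper semi-decides positivity via $\int\psi_{B}\,d\mu>0$ for bump functions $\psi_B$ (using part~(3) of Proposition~\ref{t.comp-measure}) whereas you use $\mu(B_i)>0$ directly (using part~(2)), and the paper's nesting condition is the slightly looser $B(x_i,r_i)\subset B(x_{i-1},2r_{i-1})$ rather than your $\overline{B_{n_{k+1}}}\subset B_{n_k}$.
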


\begin{proof}[Sketch of proof]
We outline the proof here and leave the details to the reader.
First, for each ideal ball $B=B(x,r)$ set  
$$\psi_B\equiv \phi_{x,r/2,r/2}$$
as in (\ref{lip functions}).
An  exhaustive search can be used to find
a sequence of ideal balls $B^i=B(x_i,r_i)$ with the following properties:
\begin{itemize}
\item $r_i\leq 2^{-i}$;
\item $B(x_i,r_i)\subset B(x_{i-1},2r_{i-1})$;
\item $\int\psi_{B^i} d\mu>0$.
\end{itemize}
The  algorithm can then be used to compute $x=\lim x_i\in\text{supp}(\mu)$. 

\end{proof}

\begin{prop}
\label{lco}
There exists a lower-computable open set $V\subset (0,1)$ such that $(0,1)\setminus V\neq \emptyset$ and 
$V$ contains all computable numbers in $(0,1)$. 
\end{prop}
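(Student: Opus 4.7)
The plan is to enumerate all algorithms that could potentially compute a real number in $(0,1)$ and cover the output of each by a rational interval so short that the total Lebesgue length of the union stays strictly below one.

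First, fix a G\"odel enumeration $(\cA_i)_{i\in\NN}$ of all algorithms. Dovetail the simulation of every $\cA_i$ on input $i+3$; whenever some $\cA_i(i+3)$ halts and returns a rational number $q_i$, append the open interval
$$
I_i := (q_i-2^{-(i+3)},\, q_i+2^{-(i+3)})\cap(0,1)
$$
to a growing list of ideal open sets, and set $V:=\bigcup_i I_i$. Because this list is recursively enumerable and each $I_i$ is a finite union of ideal balls computable from $i$, $V$ is lower-computable open.

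Next, every computable $x\in(0,1)$ is covered: let $\cA_j$ be any algorithm witnessing the computability of $x$, that is, $|\cA_j(n)-x|<2^{-n}$ for every $n$. Then $\cA_j(j+3)$ halts with some rational $q_j$ satisfying $|q_j-x|<2^{-(j+3)}$, hence $x\in I_j\subseteq V$. Finally, the total Lebesgue measure of $V$ is bounded by
$$
\sum_{i=0}^{\infty} 2\cdot 2^{-(i+3)}\;=\;1/2\;<\;1,
$$
so $(0,1)\setminus V$ has positive measure and is therefore non-empty.

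The only subtle point is that not every $\cA_i$ actually computes a real number: some fail to halt on the input $i+3$, others return non-rational data. We simply contribute nothing to $V$ in such cases, which is consistent with semi-decidable enumeration and still succeeds on the indices that genuinely encode computable reals. Beyond that, the argument is a standard diagonal covering trick whose only parameter is the exponent $i+3$, chosen so that $\sum_i 2\cdot 2^{-(i+3)}<1$.
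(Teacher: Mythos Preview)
Your proof is correct and follows essentially the same diagonal covering argument as the paper: enumerate all algorithms, run $\cA_i$ on a diagonal input, and cover each rational output by an interval whose lengths sum to less than one. The only differences are cosmetic parameter choices (you use radius $2^{-(i+3)}$ and input $i+3$, the paper uses radius $3^{-i}/2$ and input $i$), so there is nothing further to compare.
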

\begin{proof}[Sketch of proof]
Consider an algorithm $\cA$ which at step $m$ emulates the first $m$ algorithms $\cA_i(i)$, $i\leq m$ with respect to the G{\"o}del ordering
for $m$ steps. That is, the $i$-th algorithm in the ordering is given the number $i$ as the input parameter.
From time to time, an emulated algorithm $\cA_i(i)$ may output a rational number $x_i$ in $(0,1)$.
Our algorithm $\cA$ will output an interval 
$$L_i=(x_i-3^{-i}/2,x_i+ 3^{-i}/2)\cap (0,1)$$
for each term in this sequence.
The union $V=\cup L_i$ is a lower-computable set. It is easy to see from the definition of a computable real that $V\supset \RR_C\cap(0,1)$.
If $x\in \RR_C$ then there is a machine $\cA_n(j)$ that on input $j$ outputs a $3^{-j}/4$-approximation of $x$. Thus 
the execution of $\cA_n(n)$ will halt with an output $q$ such that $|x-q|<3^{-n}/4$, and $x$ will be included in $V$.
 On the other hand, the Lebesgue measure of $V$ is bounded by $1/2$, and thus does not cover all of $[0,1]$.
\end{proof}

\begin{proof}[Sketch of proof of \propref{bomb}]

By \propref{lco}, there exists an open lower-computable set $V\subset [0,1]$ such that the complement $K=[0,1]\setminus V$ contains no computable points.  Since $V$ is lower-computablle, there are computable sequences $\{a_i,b_i\}_{i\geq 1}$ such that $0<a_i<b_i<1$ and $V=\bigcup_i (a_i,b_i)$.

\begin{figure}[ht]
\centerline{\includegraphics[width=0.3\textwidth]{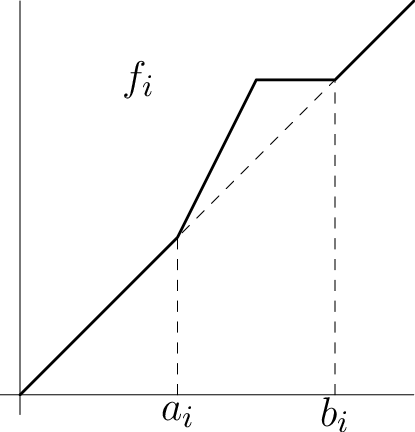}\hspace{0.1\textwidth}\includegraphics[width=0.3\textwidth]{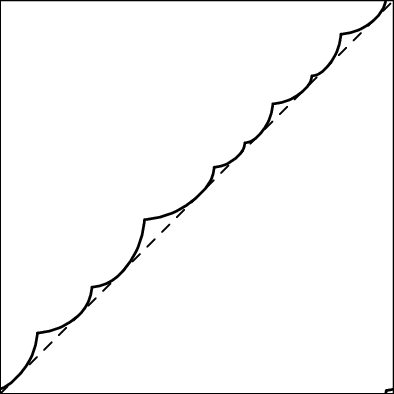}}
\caption{Left: a map $f_i$, right: the map $T_1$\label{mapt1}}
\end{figure}

 Let us define non-decreasing, uniformly computable functions $f_i:[0,1]\to[0,1]$ such that 
$$f_i(x)>x\text{ if }x\in (a_i,b_i)\text{ and }f_i(x)=x\text{ otherwise}.$$
 For instance, we can set
$$f_i(x)=2x-a_i\text{ on }\left[a_i,\frac{a_i+b_i}{2}\right],\text{ and }$$
$$f_i(x)=b_i\text{ on }\left[\frac{a_i+b_i}{2},b_i\right].$$ 
As neither $0$ nor $1$ belongs to $K$, there is a rational number $\epsilon>0$ such that $K\subseteq [\epsilon,1-\epsilon]$. 
Let us define $f:[0,1]\to \R$ by 
$$f(x)=\left\{
\begin{array}{l}
x\text{ on }[\epsilon,1-\epsilon],\\
2x-(1-\epsilon)\text{ on }[1-\epsilon,1]\\
\epsilon\text{ on }[0,\epsilon]
\end{array}
\right.
$$
We then define
$t(x):[0,1]\to\RR$ by
$$t(x)=\frac{f}{2}+\sum_{i\geq 2} 2^{-i}f_i.$$
By construction, the function  $t(x)$ is computable and non-decreasing, and $t(x)>x$ if and only if $x\in [0,1]\setminus K$. 
As $$t(1)=f(1)=1+\epsilon=1+t(0),$$
we can take the quotient 
$$T_1(x)\equiv t(x)\mod \ZZ.$$

It is easy to see that $T_1$ moves all points towards the set $K$. 
More precisely, every point $x\in K$ is fixed under $T_1$, and the orbit of every point $x\notin K$ converges to 
$\text{inf}\{y\in K\cap [x,1]\}$.
Further, for any interval $J\Subset U$, all but finitely many $T_1$-translates of $J$ are disjoint from $J$. Hence, no finite invariant
measure of $T_1$ can be supported on $J$. Thus the support of every $T_1$-invariant measure is contained in $K$.  
 By \propref{comp points}, no such measure can be computable.

\end{proof}

We are now equipped to present the counter-example $T$. We define $T_2:S^1\to S^1$ by $T_2(x)=2x\mod\ZZ$, and set
$$T=T_1\times T_2.$$
Firstly, by the same reasoning as above, every invariant measure $\mu$ of $T$ is supported on $K\times S^1$ and hence is not computable 
by \propref{comp points}. 
On the other hand, $T$ possesses invariant measures of maximal entropy. Indeed, 
let $\nu$ be any invariant measure of $T_1$ and $\lambda$ the Lebegue measure on $S^1$. Setting $\mu=\nu\times\lambda$, we have
$$h_\mu(T)=h_{\text{top}}(T)=\log 2.$$

\section{Harmonic Measure}

\subsection{Proof of Theorem C} 

Let us start with several definitions.

\begin{defn}
We recall that a compact set $K\subset\riem$ which contains at least two points is {\it uniformly perfect} if the moduli of the ring domains 
separating $K$ are bounded from above. Equivalently, there exists some $C>0$ such that for any 
$x\in K$ and $r>0$, we have 
$$\left(B(x,Cr)\setminus B(x,r)\right)\cap K=\emptyset\implies K\subset B(x,r).$$ In particular, every connected set is uniformly perfect. 
\end{defn}

It is  known that:

\begin{thm}[see \cite{MR92}]
The  Julia set of a polynomial $P$ of degree $d\geq 2$ is a uniformly perfect compact set.
\end{thm}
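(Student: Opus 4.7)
I would argue by contradiction. Suppose $J := J(P)$ fails to be uniformly perfect. Then there exist centers $x_n \in J$ and round annuli
$A_n = B(x_n, R_n)\setminus \overline{B(x_n, r_n)}$
with $A_n \cap J = \emptyset$, both components of $\riem\setminus A_n$ meeting $J$, and moduli $m_n = \mod(A_n)\to\infty$. The two main tools are (i) the existence of a repelling periodic point $p\in J$ (Proposition~\ref{properties-Julia}(f$'$)), together with Koenigs linearization $\phi\circ P^k = \lambda\cdot\phi$ near $p$ with $|\lambda|>1$, and (ii) the density of $\bigcup_{k\geq 0} P^{-k}(p)$ in $J$ (Corollary~\ref{preimages dense}).

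The core idea is to transport the annuli $A_n$ dynamically to a fixed neighborhood of $p$, while preserving (up to a bounded loss) their moduli. For each $n$, use Corollary~\ref{preimages dense} to choose an integer $N_n$ and a preimage $y_n\in P^{-N_n}(p)$ inside the inner disk $B(x_n,r_n)$. Consider the connected component of $P^{-N_n}$ applied in reverse direction: equivalently, pick an inverse branch of $P^{N_n}$ defined on a suitable neighborhood of $p$ that sends $p$ to $y_n$. Because $P^{N_n}$ restricted to $A_n$ is (after a possible slight thickening inside $A_n$) an unramified covering of some degree $d_n \leq d^{N_n}$, and because covers of annuli satisfy $\mod$(cover) $= \mod$(base)$/$degree, the annulus $A_n$ is the base of a covering annulus at $p$ with modulus $\geq m_n/d_n$. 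By separating the annulus $A_n$ into approximately $\log d_n$ subannuli of comparable modulus (the Gr\"otzsch inequality) and using Koebe-type distortion on the inner disk (which is simply connected and disjoint from the critical values, since $A_n\cap J=\emptyset$), one in fact obtains an annulus $A_n'\subset \riem\setminus J$ surrounding $p$ with $\mod(A_n')\to\infty$.

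Finally, derive a contradiction locally at $p$. In the Koenigs coordinate $\phi$, the dynamics $w\mapsto \lambda w$ admits a fundamental annulus of modulus $\log|\lambda|/(2\pi)$, and $\phi(J)$ is completely invariant under multiplication by $\lambda$. Pulling annuli of modulus $\to\infty$ around $p$ back to this coordinate produces annuli in $\CC\setminus \phi(J)$ of unbounded modulus about the origin, but $\phi(J)$ is invariant under $w\mapsto \lambda w$ with $|\lambda|>1$ and is nonempty near $0$ (other periodic points of $J$ accumulate at $p$ by Proposition~\ref{properties-Julia}(c)), which forces a uniform upper bound on any such modulus by the scaling invariance. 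This contradicts $\mod(A_n')\to\infty$, so $J$ must be uniformly perfect.

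The main technical obstacle is the transport step: iterates $P^{N_n}$ have unbounded degree and may contain critical points whose orbits land in the annular region, so one cannot naively lift $A_n$ as a single annulus. The fix is to work on the inner disk $B(x_n,r_n)$, which is disjoint from $J$ hence from the postcritical accumulation on $J$ in a controlled way, and to use Koebe distortion on inverse branches; alternatively, one replaces the conformal modulus bookkeeping with the equivalent statement about hyperbolic density on $\riem\setminus J$ and uses that this density is bounded below near $p$ by the linearization, which bypasses the need to track individual annuli through iteration.
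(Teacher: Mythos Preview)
The paper does not prove this theorem; it simply cites \cite{MR92}. So there is no ``paper's own proof'' to compare against, and your attempt must stand on its own.

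Your endgame (the Koenigs linearization argument near a repelling periodic point $p$) is correct and is in fact the right local picture: since $\phi(J)$ is invariant under $w\mapsto w/\lambda$ and $p$ is not isolated in $J$, any annulus in $\riem\setminus J$ encircling $p$ and contained in the linearization domain has modulus at most roughly $\frac{1}{2\pi}\log|\lambda|$.

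The transport step, however, does not go through as written, and you essentially concede this in your final paragraph without actually repairing it. Concretely:
\begin{itemize}
\item The direction and bookkeeping are muddled. You choose $y_n\in P^{-N_n}(p)$ inside the inner disk, so $P^{N_n}$ maps \emph{from} a neighborhood of $y_n$ \emph{to} a neighborhood of $p$. The sentence ``the annulus $A_n$ is the base of a covering annulus at $p$ with modulus $\geq m_n/d_n$'' has the roles reversed, and for an honest degree-$k$ annulus cover $\mod(\text{cover})=k\cdot\mod(\text{base})$, not the other way.
\item Even granting an annulus at $p$ of modulus $m_n/d_n$, you have no control on $N_n$, hence none on $d_n\le d^{N_n}$; the ratio $m_n/d_n$ need not tend to infinity. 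Splitting $A_n$ into $\log d_n$ subannuli via Gr\"otzsch does not by itself cure this, because you still must push each piece forward through a map of uncontrolled degree.
\item The claim that the inner disk $B(x_n,r_n)$ is ``disjoint from $J$'' is false: by your own setup $x_n\in J$ is its center. Consequently the assertion that it is ``disjoint from the critical values'' (and the appeal to Koebe distortion for inverse branches over it) is unjustified; critical values of $P^{N_n}$ can lie in $J$ or in $F$ and you have not excluded either.
\end{itemize}

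The alternative you sketch (hyperbolic density bounded below near $p$) is the right instinct, but as stated it again only gives a \emph{local} bound at $p$; you still owe an argument linking a failure of uniform perfectness \emph{somewhere} on $J$ to small hyperbolic density \emph{near $p$}. The standard proofs (Ma\~n\'e--da Rocha, Hinkkanen, Eremenko) avoid transporting to a fixed point altogether: one takes an essential annulus $A\subset F(R)$ of large modulus and studies components of $R^{-1}(A)$, using complete invariance of $J$ to show some component is again an essential annulus of comparable (in fact at least $\mod(A)/d$) modulus; iterating and using that $F(R)$ has only finitely many periodic components (or, in the polynomial case, the structure of the basin of $\infty$) forces a contradiction. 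If you want to salvage your approach, the honest route is to replace the transport by this pullback argument and only \emph{then} localize near a repelling point.
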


\noindent
Recall that the {\em logarithmic capacity} $\text{Cap}(\cdot)$ has been defined in Section \ref{sec:23}.
We next define:

\begin{defn}
Let $\Omega\subset \riem$ be an open and connected domain and set $J=\partial\Omega$. We say that $\Omega$ satisfies the {\it capacity density condition}
if there exists a constant $C>0$ such that
\begin{equation}\label{cap-condition}
\text{Cap}(B(x,r)\cap \partial \Omega)\geq Cr \qquad \text{ for all } x\in \partial \Omega\text{ and }r\leq r_0.
\end{equation}
\end{defn}

\noindent
We note:

\begin{thm}[see Theorem 1 in \cite{Pom79}]
Condition \eqref{cap-condition} is equivalent to uniform perfectness of $\partial\Omega$.
\end{thm}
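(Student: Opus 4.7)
The plan is to establish the two implications separately, since the statement is an equivalence between a geometric condition (uniform perfectness) and a potential-theoretic condition (capacity density).

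For the direction capacity density $\Rightarrow$ uniform perfectness, I would argue by contraposition. Suppose $\partial\Omega$ is not uniformly perfect, so for every integer $N$ there exist $x_N \in \partial\Omega$ and $r_N > 0$ with
\[
(B(x_N, N r_N) \setminus B(x_N, r_N)) \cap \partial\Omega = \emptyset \quad \text{and} \quad \partial\Omega \not\subset B(x_N, r_N).
\]
The first condition, combined with the fact that $\partial\Omega$ contains points both inside $B(x_N, r_N)$ (namely $x_N$) and outside $B(x_N, r_N)$ (by the second condition), forces
\[
B(x_N, N r_N) \cap \partial\Omega \subset B(x_N, r_N).
\]
By monotonicity of the logarithmic capacity and the fact that $\text{Cap}(\overline{B(x_N, r_N)}) = r_N$, we get $\text{Cap}(B(x_N, N r_N) \cap \partial\Omega) \leq r_N$. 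But the capacity density condition at the scale $N r_N$ would require this quantity to be at least $C \cdot N r_N$, contradicting $N > 1/C$.

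For the harder direction uniform perfectness $\Rightarrow$ capacity density, fix $x \in \partial\Omega$, $r \leq r_0$, and let $C_0$ be the uniform perfectness constant. The plan is to extract from $B(x, r) \cap \partial\Omega$ a nested Cantor-like system of points that distributes mass at every dyadic-like scale down from $r$. Concretely, consider the geometric sequence of radii $r_n = r / C_0^n$ for $n = 0, 1, 2, \ldots$. By the uniform perfectness hypothesis applied to each annulus $B(x, r_n) \setminus B(x, r_{n+1})$, either this annulus contains a point of $\partial\Omega$, or $\partial\Omega$ is entirely contained in $B(x, r_{n+1})$; in the latter case we can re-center and continue, so in either case we produce a sequence of points $z_n \in \partial\Omega \cap B(x, r)$ with $|z_n - z_{n+1}|$ comparable to $r_n$.

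Having produced this scale-invariant structure, I would place a probability measure $\mu$ on these points (or on a Cantor set refinement) with mass $C_0^{-n}$ distributed on scale-$r_n$ descendants, mimicking a Hausdorff-type measure of dimension $\log 2 / \log C_0 > 0$. A direct logarithmic energy calculation $\iint \log (1/|z - w|) \, d\mu(z) d\mu(w)$ then gives an upper bound of order $\log(1/r) + O(1)$, so that $\text{Cap}(B(x, r) \cap \partial\Omega) \geq C r$ for a constant depending only on $C_0$. The main obstacle is controlling the energy integral uniformly across scales: rough placement of the points $z_n$ is not enough, and one must iterate the uniform perfectness condition along a branching tree (not merely a single chain) to obtain a measure whose energy is summable. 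This is precisely the mechanism of Pommerenke's argument, and the cleanest execution probably goes through the explicit Cantor-tree construction rather than a direct estimate on the nested points themselves.
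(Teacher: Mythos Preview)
The paper does not prove this theorem; it simply cites it as Theorem~1 of \cite{Pom79}. So there is no ``paper's own proof'' to compare your proposal against, and your sketch is effectively an outline of Pommerenke's original argument.

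Your contrapositive for the direction capacity density $\Rightarrow$ uniform perfectness is essentially right, but note one gap: the capacity density condition \eqref{cap-condition} only applies at radii $r\leq r_0$, and you invoke it at radius $Nr_N$ without verifying $Nr_N\leq r_0$. This is easily repaired (since the separating annulus forces $Nr_N\leq\diam(\partial\Omega)$, for large $N$ one has $r_N<r_0$ and can work at scale $\min(Nr_N,r_0)$), but it should be said.

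For the direction uniform perfectness $\Rightarrow$ capacity density, you correctly identify that a single nested chain of points is insufficient and that one needs a branching Cantor-type tree whose equilibrium measure has controlled logarithmic energy. However, what you have written is a description of the strategy rather than a proof: the actual work lies in showing that uniform perfectness lets you split \emph{every} piece of $\partial\Omega\cap B(x,r)$ at each generation into two well-separated subpieces of comparable diameter, and then summing the energy over generations. As written, your iteration only produces one point per annulus, not a dyadic tree. Since you explicitly defer to ``Pommerenke's argument'' for this, the proposal is honest about its scope, but as a self-contained proof it is incomplete on this direction.
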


\noindent
The celebrated result of Kakutani \cite{kakutani1944two}, gives a connection between Brownian motion 
and the Harmonic measure. 

\begin{thm}\cite{kakutani1944two}
Let $K\subset \riem$ be a  compact set with a connected complement $\Omega$. 
Fix a point $x\in\Omega$ and let $B_t$ denote a Brownian path started at $x$.
Let the random variable $T$ be the first moment when $B_t$ hits $\partial\Omega$, and let
$\omega_x = \omega_{\Omega,x}$ denote the harmonic measure corresponding to $x$. 
Then for any measurable function $f$ on $\partial \Omega$,
$$
\int f d\omega_x = \EE[f(B_T)].
$$
\end{thm}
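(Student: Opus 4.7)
The plan is to prove Kakutani's theorem by exploiting two characterizations of the same harmonic function: the one coming from the Perron solution to the Dirichlet problem (which produces $\int f\, d\omega_x$), and the one coming from Brownian motion (which produces $\EE[f(B_T)]$). I will first treat continuous boundary data $f$ and then extend by a standard density argument.

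First I will verify that $T<\infty$ almost surely. Since $\Omega\subset\riem$ has nonempty boundary and $\riem$ is compact, $2$-dimensional Brownian motion on the sphere, started in $\Omega$, must exit every neighborhood of $x$ in finite time, and by the recurrence/nonpolarity of any set with more than one point in $\riem$ (together with the hypothesis that $K$ has a connected complement and therefore $\partial\Omega\neq\emptyset$), $B_t$ almost surely hits $\partial\Omega$. So the random variable $B_T\in\partial\Omega$ is well-defined a.s.

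Next, for a continuous $f$ on $\partial\Omega$ I will set
\[
v(x)=\EE_x[f(B_T)], \qquad x\in\Omega,
\]
and show that $v$ is the Perron solution to the Dirichlet problem for $f$. Harmonicity of $v$ in $\Omega$ follows from the strong Markov property: for any small closed disk $\overline{B(x,r)}\subset\Omega$, letting $\tau_r$ be the exit time from this disk, one has $v(x)=\EE_x[v(B_{\tau_r})]$; rotational symmetry of Brownian motion makes $B_{\tau_r}$ uniformly distributed on $\partial B(x,r)$, so $v$ satisfies the mean value property and is therefore harmonic. That $v$ is bounded and attains the correct boundary values at every regular boundary point follows from the usual cone/barrier argument using continuity of $f$ and the fact that at a regular point $y\in\partial\Omega$ Brownian motion started near $y$ hits $\partial\Omega$ near $y$ with high probability. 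By the uniqueness part of Perron's method, $v(x)=\int f\,d\omega_x$.

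The main technical subtlety, and what I expect to be the hardest step, is the boundary behavior at irregular points of $\partial\Omega$: the naive argument gives equality only on the set of regular points. I would handle this by noting that the irregular set is polar and hence carries zero harmonic measure, so the identity $v=\int f\,d\omega_x$ still holds as harmonic functions on $\Omega$ (by uniqueness of bounded harmonic extensions with the same regular-boundary data). Once equality is established for continuous $f$, a standard monotone class / bounded convergence argument upgrades it to arbitrary bounded Borel $f$, and then to all $\omega_x$-measurable $f$ for which either side makes sense. This yields $\int f\,d\omega_x=\EE[f(B_T)]$, as required.
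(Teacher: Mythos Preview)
The paper does not prove this theorem at all: it is stated as a classical result of Kakutani with a citation to \cite{kakutani1944two} and is then used as a black box in the proof of Theorem~C. There is therefore no ``paper's own proof'' to compare against.

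That said, your sketch is the standard route to Kakutani's theorem and is essentially correct in outline: define $v(x)=\EE_x[f(B_T)]$, use the strong Markov property and rotational invariance to get the mean value property, identify $v$ with the Perron solution via boundary regularity, invoke Kellogg's theorem that the irregular set is polar (hence $\omega_x$-null), and finish with a monotone class argument for general $f$. One minor caution: your justification that $T<\infty$ a.s.\ is a bit loose as written---``recurrence/nonpolarity of any set with more than one point'' is the right idea, but you should be explicit that on $\riem$ Brownian motion is recurrent and that any compact $K$ containing at least two points is nonpolar, so $\PP_x[T<\infty]=1$. The hypothesis that $\Omega$ is connected is not what guarantees $\partial\Omega\neq\emptyset$; rather, $K$ being compact with at least two points (implicit in the harmonic-measure setup) does.
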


\noindent
In \cite{BB07} the following computable version of the Dirichlet problem has been proved:

\begin{thm}
\label{dirichlet-compute}
Let $K\subset \riem$ be a  compact computable set with a connected complement $\Omega$. 
Let  $x\in\Omega$ be any point in $\Omega$ and let $B_t$ denote a Brownian path started at $x$.
There is an algorithm $\cA$ that, given access to $K$, $x$, and a precision parameter $\ve$, outputs
an $\ve/4$-approximate sample $\be_\ve$ from a random variable $B_{T_\ve}$ where $T_{\ve}$ is a stopping rule on $B_t$ that always satisfies 
$$
\ve/2 < \dist(B_{T_\ve},K) < \ve.
$$
\end{thm}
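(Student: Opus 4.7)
The plan is to implement a \emph{walk on spheres} algorithm, based on the classical fact that planar Brownian motion started at the center of a disk $B(y, r) \subset \Omega$ exits through the boundary circle $\partial B(y, r)$ with the uniform distribution. The computability of $K$ as a compact set means that $y \mapsto d(y) := \dist(y, K)$ is a computable real-valued function on $\Omega$: one can lower-bound $d(y)$ from upper-computability of $K$ by enumerating ideal balls in $\Omega$ that cover a neighborhood of $y$, and upper-bound $d(y)$ from lower-computability of $K$ by enumerating ideal balls that meet $K$.

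The algorithm maintains a current point $y_n \in \Omega$, starting from $y_0 = x$. At each step it: (i) computes $\tilde r_n$, an approximation of $d(y_n)$ to precision $\ve/16$; (ii) if $\tilde r_n$ lies in the desired range, so that $d(y_n)$ is guaranteed to satisfy $\ve/2 < d(y_n) < \ve$, outputs $y_n$ as $\be_\ve$ and halts; (iii) otherwise, generates an approximately uniform angle $\theta \in [0, 2\pi)$ using a computable random sampler, and sets $y_{n+1} = y_n + \tilde r_n' e^{i\theta}$ where $\tilde r_n'$ is a rational number slightly smaller than $\tilde r_n$ chosen so that $B(y_n, \tilde r_n') \subset \Omega$. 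By the strong Markov property and the uniform exit distribution for Brownian motion on a disk, the successive points $y_n$ can be coupled with the values of a genuine Brownian motion $B_t$ at an increasing sequence of stopping times, and the eventual output $y_N$ realizes a sample of $B_{T_\ve}$, where $T_\ve$ is the first time $B_t$ enters the annular region $\{\ve/2 < \dist(\cdot, K) < \ve\}$, up to a coupling error absorbed in the $\ve/4$ tolerance.

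The main obstacle is proving termination with a quantitative bound on the running time. Since $\riem$ is compact, Brownian motion on it is recurrent and a.s.\ hits $K$, so the walk enters the target annulus in finite time with probability one. Quantitatively, one shows that $\log d(y_n)$ is a supermartingale with a negative drift once $d(y_n)$ is bounded away from $0$; this gives exponential tail bounds on $T_\ve$ depending on $x$, on the geometry of $K$, and on $\ve$. For a concrete algorithm one truncates after a sufficiently large number of steps, so that the contribution to total variation from the truncation event is $O(\ve)$. The numerical errors from approximating $d(y_n)$ in step (i) and from sampling $\theta$ in step (iii) can be accumulated to contribute a further $O(\ve)$ to the coupling error, giving an output within $\ve/4$ of $B_{T_\ve}$ for some coupling, as required.

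A minor subtlety is that step (iii) can place $y_{n+1}$ very close to $K$ when $\partial B(y_n, \tilde r_n)$ is nearly tangent to $K$; the shrunk radius $\tilde r_n'$ prevents stepping onto $K$, and one verifies that the induced conditional distributions differ from the true Brownian exit distributions by an amount controlled by $\tilde r_n - \tilde r_n'$, which we take summable in $n$ to stay within the overall $\ve$ budget.
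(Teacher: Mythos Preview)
The paper does not prove this result; it is quoted from \cite{BB07}. Walk-on-spheres is indeed the method used there, so your overall architecture is the right one, but the sketch has two genuine gaps.

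\emph{Overshooting and non-termination.} With jump radius $\tilde r_n'$ taken only ``slightly smaller'' than $\tilde r_n\approx d(y_n)$, a single step from $d(y_n)>\ve$ can land with $d(y_{n+1})$ far below $\ve/2$, skipping the target annulus entirely. From such a point the coupled Brownian path has a fixed positive probability (independent of any truncation level) of hitting $K$ before returning to $\{d\ge\ve/2\}$; on that event every subsequent $y_m$ remains in $\{d<\ve/2\}$, the jump radii shrink, $\tau_m\uparrow T_K$, and the walk never satisfies your halting test. Your final paragraph notices that $y_{n+1}$ can be very close to $K$ but treats this purely as a coupling-error issue, not a termination issue; truncation after $M$ steps does not help, since the bad event has probability bounded away from $0$ uniformly in $M$. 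The standard fix is to jump by $r_n\approx d(y_n)-3\ve/4$ whenever $d(y_n)\ge\ve$, which forces $d(y_{n+1})\ge 3\ve/4$ and makes the walk halt exactly at the first $n$ with $d(y_n)<\ve$.

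\emph{The rate argument.} It is true that $\log d(\cdot)$ is superharmonic (an infimum of the harmonic functions $z\mapsto\log|z-w|$, $w\in K$), so $\log d(y_n)$ is a supermartingale; but the drift is \emph{not} strictly negative in general --- near an isolated point of $K$, or along a smooth boundary arc, it vanishes --- so this does not yield exponential tails on its own. After the fix above the correct termination argument is simpler: each jump radius is at least $\ve/4$, so the increments $\tau_{n+1}-\tau_n$ are stochastically bounded below, whence $\tau_n\to\infty$ on the event of non-termination, contradicting $\tau_n<T_K<\infty$ a.s.\ when $K$ has positive capacity (as it does in the paper's application to uniformly perfect sets). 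One minor correction: the $T_\ve$ you actually produce is $\tau_N$, which is \emph{not} the first entrance time of $B_t$ into the annulus but only some later stopping time landing there --- this is harmless, since the statement asks for nothing more.
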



\noindent
In other words, we are able to stop the Brownian motion at distance $\approx \ve$ from the boundary. 
We now formulate the following proposition that is a reformulation of Theorem~C:

\begin{proposition}\label{prop:harm}
Let $\Omega\subset \riem$ be the complement of a computable compact set $K$  and $x_{0}$ be a point in $\Omega$. Suppose $\Omega$ is connected and 
satisfies the capacity density condition.
Then the harmonic measure $\omega_{\Omega,x_0}$ is computable with an oracle for $x_0$. 

\end{proposition}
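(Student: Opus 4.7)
The strategy is to express the harmonic measure $\omega_{\Omega,x_0}$ via Kakutani's probabilistic formula as the hitting distribution of a Brownian motion, approximate this by the law of Brownian motion stopped a small distance $\eps$ from $K$ (as provided by \thmref{dirichlet-compute}), and use the capacity density condition to supply the quantitative control that makes the approximation effective.

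I begin by reducing to integrals: by \propref{t.comp-measure}(3), computability of $\omega_{\Omega,x_0}$ in $\M(\riem)$ is equivalent to uniform computability of $\int\varphi\,d\omega_{\Omega,x_0}$ over the sufficient family $\cH$ of Lipschitz test functions introduced in the proof of Theorem~A. By Kakutani, $\int\varphi\,d\omega_{\Omega,x_0}=\bE_{x_0}[\varphi(B_T)]$, where $T$ is the first hitting time of $K$ by a Brownian motion started at $x_0$. Denoting by $\mu_\eps$ the law of $B_{T_\eps}$, the sampling algorithm of \thmref{dirichlet-compute} yields approximate samples from $\mu_\eps$ to within $\eps/4$; by averaging large numbers of such samples, the measure $\mu_\eps$ is a computable point in $\M(\riem)$, uniformly in $\eps$.

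The key analytic step is the classical Beurling--Ancona--Aikawa tail estimate, valid under the capacity density condition: there exist constants $C_0,\alpha>0$ (depending only on the CDC constant for $\partial\Omega$) such that for every $y\in\Omega$ with $\dist(y,\partial\Omega)\leq\eps$ and every $r\geq\eps$,
$$\omega_{\Omega,y}\bigl(\{z\in\partial\Omega:|z-y|>r\}\bigr)\leq C_0(\eps/r)^\alpha.$$
Integrating the tail yields $W_1(\omega_{\Omega,y},\delta_y)\leq C_2\eps^{\min(\alpha,1)}$ (with a logarithmic factor when $\alpha=1$). The strong Markov property at $T_\eps$ gives $\omega_{\Omega,x_0}=\int\omega_{\Omega,y}\,d\mu_\eps(y)$, so averaging the previous bound over $y\sim\mu_\eps$ yields the H\"older estimate $W_1(\mu_\eps,\omega_{\Omega,x_0})\leq C_1\eps^\beta$ for some $C_1,\beta>0$.

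Finally I would extract $\omega_{\Omega,x_0}$ from the computable Cauchy sequence $(\mu_{2^{-k}})_k$. Since Wasserstein distances between computable measures are themselves computable, I can compute $W_1(\mu_{2^{-k}},\mu_{2^{-k'}})$ for $k'>k$ and search for $k$ such that these distances are below $2^{-n-1}$ and exhibit a geometric decay pattern that certifies the infinite tail $\sum_{j\geq k}W_1(\mu_{2^{-j}},\mu_{2^{-j-1}})<2^{-n}$. The main obstacle is precisely this final certification: the H\"older constants $C_1,\beta$ are not algorithmically available (the hypothesis guarantees only their existence, not their computability), so the termination of the search for a witness $k$ must be justified from the purely qualitative fact that a H\"older-rate Cauchy sequence of computable objects admits an \emph{a posteriori} certification of its rate through its own computable increments.
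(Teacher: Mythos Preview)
Your strategy coincides with the paper's: Kakutani's formula, the approximate stopping of Brownian motion from \thmref{dirichlet-compute}, and a tail bound for the hitting point coming from the capacity density condition. The paper states that bound in single-scale form ($\PP_y[\,|B_{T}-y|\ge 2\eta\,]<\nu<1$ whenever $\dist(y,\partial\Omega)\le\eta$, its Proposition~\ref{first hit}) and then iterates over dyadic scales; your H\"older tail $\omega_{\Omega,y}(\{|z-y|>r\})\le C_0(\eps/r)^\alpha$ is exactly that iteration, and the resulting $W_1(\mu_\eps,\omega_{\Omega,x_0})\le C_1\eps^\beta$ is the paper's estimate on $1$-Lipschitz integrals rewritten in Wasserstein language.

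The gap is in what you call the ``main obstacle.'' You worry that $C_1,\beta$ are not algorithmically available and propose to certify the Cauchy rate a~posteriori from the computable increments $W_1(\mu_{2^{-k}},\mu_{2^{-k'}})$. But the proposition is \emph{non-uniform in $K$}: it asserts only that for each fixed computable $K$ satisfying the CDC there \emph{exists} an algorithm (with oracle for $x_0$) computing $\omega_{\Omega,x_0}$. The constants $\nu,C_1,\beta$ depend only on $K$, so rational bounds for them may simply be hard-coded into the algorithm; the paper does precisely this when it writes ``we can compute an $\eps<\de/20$ such that \ldots''.  Once those constants are given, choosing $\eps$ for target precision $2^{-n}$ is immediate and no certification is needed. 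Your proposed certification would not work in any case: finitely many increments of a Cauchy sequence cannot, absent prior rate information, bound the infinite tail --- for any finite pattern of small increments there is a H\"older-Cauchy sequence (with large enough $C_1$) matching it and then jumping.
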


\begin{proof}
Fix $x_0\in\Omega$.  As before, we denote by
$(B_t)$ the Brownian Motion started at $x_0$
 and set
 $$T=\min\{t\ :\ B_t\in\partial\Omega\}.$$ 
We will use Theorem~\ref{dirichlet-compute} together with the capacity density condition to prove
Proposition~\ref{prop:harm}.

The capacity density condition implies the following (see \cite{Garnett-Marshall}, page 343):

\begin{prop}
\label{first hit}
There exists a constant $\nu=\nu(C)$ (with $C$ as in the capacity density condition)
such that for any $\eta>0$ the following holds. Let $y\in \Omega$ be a point such that $\dist(y,\omega)\le \eta$, and 
let $B^\omega$ be a Brownian Motion started at $y$. Let $$T^y:=\min\{t~:~B^y_t\in \partial \Omega\}$$ be the first time 
$B^y$ hits the boundary of $\Omega$. Then 
\beq
\label{eq:cap}
\PP[|B^y_{T_y}-y|\geq 2\eta]<\nu.\eeq
\end{prop}
 In other words, there is at least a constant probability that the first point where $B^y$ hits the boundary
 is close to the starting point $y$. 

Now let $f$ be any function on $K$ satisfying the $1$-Lip condition. Our goal is to compute 
$$\int f\,d\omega=\EE[f(B_T)].$$
within any prescribed precision parameter $\de$. Note that 
$$
\EE[f(B_T)] = \EE_{B_{T_\ve}}[\EE[f(B_T)|B_{T_\ve}|]].
$$
We first claim that we can compute an $\ve$ such that 
\beq \label{cl:1}
| \tilde{f}_{2\ve}(B_{T_\ve}) - \EE[f(B_T)|B_{T_\ve}|]| < \de/2.
\eeq
Here $T_\ve$ is given by the any stopping rule as in Proposition~\ref{dirichlet-compute}, and 
$\tilde{f}_{2\ve}(B_{T_\ve})$ is {\em any} evaluation of $f$ in a $2 \ve$-neighborhood of $B_{T_\ve}$ (note 
that $f$ itself is not defined on $B_{T_\ve}\notin K$). 

Let $M$ be a universal bound on the absolute value of $f$. 
By \eqref{eq:cap} we can compute an $\ve<\de/20$ such that if $y$ is $\ve$-close to $K$, the probability 
that $|B^y_{T_y}-y|>\de/10$ is smaller than $\de/10M$. We split the probabilities into two cases: one where
$B_T$ stays $\de/10$-close to $B_{T_\ve}$, and the complementary case. By \eqref{cl:1} we have
\begin{multline*}
| \tilde{f}_{2\ve}(B_{T_\ve}) - \EE[f(B_T)|B_{T_\ve}|]|  = \\ | \tilde{f}_{2\ve}(B_{T_\ve}) - \EE[f(B_T)|B_{T_\ve}|,|B_T-B_{T_\ve}|<\de/10]|\cdot \PP[|B_T-B_{T_\ve}|<\de/10] + \\ | \tilde{f}_{2\ve}(B_{T_\ve}) - \EE[f(B_T)|B_{T_\ve}|,|B_T-B_{T_\ve}|\ge\de/10]|\cdot \PP[|B_T-B_{T_\ve}|\ge\de/10]\le\\
(\de/6)\cdot 1+M\cdot (\de/10M) < \de/2. 
\end{multline*}
 To complete the proof of the proposition it remains to note that given a $\be_\ve$ that 
 $\ve/4$-approximates $B_{T_\ve}$ as in Theorem~\ref{dirichlet-compute}, we can evaluate $\tilde{f}_{2\ve}(B_{T_\ve})$ by 
 evaluating $\tilde{f}_{3\ve/2}(\be_\ve)$ (by evaluating $f$ at any point in a $3\ve/2$-neighborhood of $\be_\ve$). In this way, we obtain
 \beq \label{cl:2}
\EE_{\be_\ve} | \tilde{f}_{3\ve/2}(\be_\ve) - \EE[f(B_T)]| < \de/2.
\eeq
Thus, being able to evaluate $\tilde{f}_{3\ve/2}(\be_\ve)$ with precision $\de/2$ suffices.
\end{proof}

%
%
%
%
%

\subsection{A counter-example}

As demonstrated by the following example, even for a computable regular domain, the harmonic measure is not necessarily computable. 
Thus the capacity density condition in Theorem~C is cruicial. 

For $a,b\in \mathbb R$, we denote by $[a,b]$ the shortest arc of the unit circle between $e^{2\pi ia}$ and $e^{2\pi ib}$. As before, let $\cA_{i}$ be the G{\"o}del ordering of
algorithms.
Define a collection of subsets of the circle as follows.
If $\cA_{n}$  halts in $l$ steps, we set $j=\max(l,8n)$ and denote by
$$L_n:=L_n^j:=\cup_{k=1-2^j+2^{j-8n}}^{2^j-2^{j-8n}-1}[2^{-n}+k \cdot 2^{-2n-j},2^{-n}+k\cdot 2^{-2n-j}+\exp\left(-2^{2n+2j}\right)].$$
Otherwise, if $\cA_{n}$  does not halt, we denote
$$L_n:=L_n^\infty:=[2^{-n}-2^{-2n}+2^{-10n},2^{-n}+2^{-2n}-2^{-10n}]$$
(see \figref{fig-gates}).

\begin{figure}[ht]
\centerline{\includegraphics[width=\textwidth]{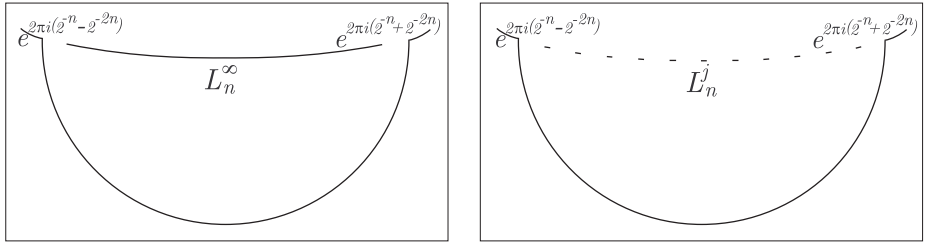}}
\caption{Left: a set $L_n^\infty$ corresponding to an algorithm $\cA_n$ which does not halt; right: a set $L_n^j$ corresponding to $\cA_n$ which halts in $j$ steps.
\label{fig-gates}
}
\end{figure}

Let  $D_n$ denote the disk whose diameter is given by the points $\exp(2\pi i (2^{-n}-2^{-2 n}))$ and  $\exp(2\pi i (2^{-n}+2^{-2 n}))$. Let $$\Lambda:={\mathbb D}\cup\bigcup_{n=10}^{\infty}D_n.$$
The domain $\Omega$ is obtained by removing the arcs from $L_n$ from  $\Lambda$. To be precise, set
$$\Omega:=\Lambda\setminus\bigcup_{n=10}^{\infty}L_n.$$
and
$$K:=\CC\setminus \Omega.$$

We note:
\begin{prop}
The compact set  $K$ is computable.
\end{prop}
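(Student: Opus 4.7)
The plan is to exhibit, for any precision $\varepsilon>0$, a finite union of ideal balls Hausdorff-approximating $K$ to within $\varepsilon$, witnessing that $K$ is a computable point of $\text{Comp}(\riem)$. Starting from the disjoint decomposition
\[
K \;=\; (\riem\setminus\Lambda) \;\cup\; \bigcup_{n\geq 10} L_n
\]
(using $L_n\subset \partial\mathbb{D}\cap D_n\subset\Lambda$ and $\partial\Lambda\subset\riem\setminus\Lambda$), I first verify that $\riem\setminus\Lambda$ is a computable closed set: it is upper-computable because $\Lambda$ is a lower-computable open set (the union of $\mathbb{D}$ with the uniformly computable open disks $D_n$), and lower-computable because it admits a uniformly computable dense sequence assembled from rational points in the exterior $\riem\setminus\overline\Lambda$, unit-modulus points $e^{2\pi i q}$ with $q$ in the computable complement of the intervals $(2^{-n}-2^{-2n},\,2^{-n}+2^{-2n})$ (a family dense even near $q=0$), and rational arc-parameter points on the outer semicircles of each $\partial D_n$. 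The delicate part of the argument is the second summand.

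The key observation controlling $L_n$ is the geometric estimate built into the construction: $L_n\subset L_n^\infty$ and
\[
\dist_H(L_n,\,L_n^\infty)\;\leq\;C\cdot 2^{-2n-j_n},
\]
where $j_n=\max(l_n,8n)\geq 8n$ if $\cA_n$ halts after $l_n$ steps and $L_n=L_n^\infty$ (so the distance is $0$) otherwise, and $C$ is an absolute constant relating arc length to Euclidean distance. Simulating $\cA_n$ for $T$ steps produces a dichotomy: either $\cA_n$ has halted, in which case $L_n=L_n^{j_n}$ is known explicitly and is a computable closed set, or $j_n\geq T$ regardless of any subsequent behavior, so the fully computable arc $L_n^\infty$ Hausdorff-approximates $L_n$ within $C\cdot 2^{-2n-T}$. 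Thus each $L_n$ admits a Hausdorff approximation of any prescribed accuracy in finite time, without deciding whether $\cA_n$ halts.

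To assemble the overall output for input $\varepsilon$, I choose $N$ large enough that $\diam(L_n^\infty)\leq\varepsilon/2$ for all $n\geq N$; then each $L_n$ with $n\geq N$ lies within $\varepsilon/2$ of the computable endpoint $\exp(2\pi i(2^{-n}-2^{-2n}))\in\riem\setminus\Lambda$, and the tail $\bigcup_{n\geq N}L_n$ may be discarded at Hausdorff cost $\leq\varepsilon/2$. For the finitely many $n<N$ I apply the procedure of the previous paragraph at precision $\varepsilon/2$. The subadditivity $\dist_H(A\cup B,A'\cup B')\leq\max(\dist_H(A,A'),\dist_H(B,B'))$ then guarantees that the union of these approximations is an $\varepsilon$-Hausdorff approximation of $K$. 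The $2^{-10n}$ safety margin and the $\max(l,8n)$ clause in the definition of $L_n^j$ are precisely what makes the $2^{-2n-j_n}$ bound strong enough to absorb the halting uncertainty — this is how the construction produces a computable $K$ despite using undecidable data.
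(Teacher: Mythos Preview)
Your proof is correct and follows essentially the same approach as the paper's: both hinge on the estimate $\dist_H(L_n^j,L_n^\infty)\le C\cdot 2^{-2n-j}$, which lets one approximate $L_n$ to any precision by simulating $\cA_n$ for enough steps, combined with a tail truncation in $n$. Your write-up is more explicit than the paper's---you spell out the decomposition $K=(\riem\setminus\Lambda)\cup\bigcup L_n$, verify the computability of $\riem\setminus\Lambda$, and justify the tail cutoff via the endpoint $\exp(2\pi i(2^{-n}-2^{-2n}))\in\riem\setminus\Lambda$---whereas the paper's proof is a terse three-liner that leaves these points implicit.
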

\begin{proof}
Note that 
$$\dist_H(L_n^j,L_n^\infty)<12\cdot 2^{-2n-j}.$$
We can thus compute $L_n$ with an arbitrary precision by emulating $\cA_n$ for sufficiently many steps.

To compute the set $K$ with precision $2^{-m}$, it suffices to compute the first $2^{m+4}$ sets $L_n$ with precision $2^{-(m+4)}$.
\end{proof}

Now let us show that:

\begin{prop}
\label{hnc}
The harmonic measure 
$$\omega:=\omega_{\Omega,0}$$ is not computable. 
\end{prop}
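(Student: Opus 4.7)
My plan is to show that computability of $\omega$ would give a decision procedure for the Halting Problem. The bubble $D_n$ is set up so that $\omega$ assigns very different mass to its outer boundary $E_n := \partial D_n \setminus \overline{\DD}$ depending on whether $\cA_n$ halts: of order $2^{-2n}$ if it halts, and at most $2^{-10n}$ if it does not.

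The main technical ingredient is the quantitative estimate:
\begin{itemize}
\item[(i)] If $\cA_n$ does not halt, $\omega(E_n) \leq C_1 \cdot 2^{-10n}$.
\item[(ii)] If $\cA_n$ halts, $\omega(E_n) \geq C_2 \cdot 2^{-2n}$.
\end{itemize}
For (i), I would reduce by monotonicity of harmonic measure (replacing $\Omega$ by the simpler domain $(\DD \cup D_n) \setminus L_n^\infty$) to bounding the harmonic measure of $E_n$ through a gate whose only openings are two arcs of total length $2\cdot 2^{-10n}$ at the endpoints of the chord $G_n := \partial\DD\cap\overline{D_n}$. A strong Markov argument using uniformity of the Brownian first-hit of $\partial\DD$ from $0$, combined with the geometric tail of reentries from the bubble, yields the desired bound. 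For (ii), the slits in $L_n^j$ have individual lengths $\exp(-2^{2n+2j})$, so their logarithmic capacities are vanishingly small; the entire family $L_n^j$ has total capacity much smaller than that of $G_n$ (whose length is of order $2^{-2n}$). Hence the slits act as a transparent obstacle, and $\omega(E_n)$ is comparable, up to a factor $1+o(1)$, to its value in the fully open domain $\DD\cup D_n$, which is of order $2^{-2n}$ by a direct conformal computation.

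To exploit these estimates, I would construct a uniformly computable sequence of Lipschitz test functions $\{f_n\}_{n\geq 10}$, each supported in $D_n\setminus\overline{\DD}$ and equal to $1$ on $\{z\in D_n\setminus\overline{\DD}:\ \dist(z,\partial\DD)\geq 2^{-3n}\}$. Since the bubbles $\{D_m\}$ are pairwise disjoint and $\supp(f_n)$ stays a fixed positive distance from $\partial\DD$, one has $\supp(f_n)\cap\partial\Omega \subset E_n$. Using that the interior angle of $\Omega$ at each endpoint of $E_n$ is approximately $3\pi/2$ (so harmonic measure near such a corner scales like $r^{2/3}$), the mass $\omega(E_n\setminus\{f_n=1\})$ can be shown to be much smaller than $\omega(E_n)$ in the halting case, so that $\int f_n\,d\omega$ is comparable to $\omega(E_n)$. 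By Proposition~\ref{t.comp-integrals}, if $\omega$ were computable then the integrals $\int f_n\,d\omega$ would be uniformly computable in $n$, producing $2^{-5n}$-approximations $a_n$.

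Combining (i) and (ii) with the test-function bound, for all sufficiently large $n$ we obtain $a_n > 2^{-3n}$ in the halting case and $a_n < 2^{-4n}$ in the non-halting case; thresholding $a_n$ at $2^{-4n}$ then decides whether $\cA_n$ halts, contradicting undecidability of the Halting Problem. The main technical obstacle is the pair of harmonic-measure estimates above, especially (ii), which requires controlling the perturbative effect of a Cantor-like family of slits whose total logarithmic capacity is vanishingly small compared to the capacity of the gate $G_n$.
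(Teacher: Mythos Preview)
Your overall strategy coincides with the paper's: establish a dichotomy for $\omega(E_n)$ (the paper writes $S_n=\partial D_n\setminus\DD$) of order $2^{-2n}$ versus $\lesssim 2^{-9n}$ according to whether $\cA_n$ halts, then use uniformly computable test functions and Proposition~\ref{t.comp-integrals} to decide Halting. Your sketches for (i) and (ii) are in the same spirit as the paper's Proposition~\ref{prop:example}: for (i) the paper also reduces by monotonicity to a simpler slit domain and reads off the bound after a conformal map; for (ii) the paper, like you, uses the tiny logarithmic capacity of the slits (via the subadditivity Lemma~\ref{subadditivity} and the capacity--harmonic-measure estimate \eqref{eq:harm_est}) together with a strong-Markov/symmetry argument to show the slits do not block a definite fraction of paths.

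The one genuine methodological difference is in the test-function step. You fix a single Lipschitz $f_n$ per bubble and then need the extra estimate $\omega(E_n\setminus\{f_n=1\})\ll\omega(E_n)$ near the corners, for which you invoke the $r^{2/3}$ scaling at an interior angle $3\pi/2$. This is plausible but requires care, since $\Omega$ near those corners is not a clean wedge (the slit family $L_n^j$ accumulates there). The paper sidesteps this entirely: it builds \emph{two} sequences $U^n_j\searrow\chi_{S_n}$ and $L^n_j\nearrow\chi_{S_n}$, uses only non-atomicity of harmonic measure to get $\int U^n_j\,d\omega\to\omega(S_n)$ and $\int L^n_j\,d\omega\to\omega(S_n)$, and then lets the deciding algorithm search over $j$ until one side crosses the threshold. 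That trick buys you freedom from any boundary-regularity or corner-exponent analysis; your single-test-function route is shorter to state but carries the additional analytic burden.
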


For a set $K_0\subset\{z\ :\ 0 < \delta < |z| < r < 1 \}$ set $$\gamma(K_0):=-\log \text{Cap}(K_0).$$

We need an auxilliary lemma:
\begin{lem}[Theorem 5.1.4 in \cite{Ransford}]
\label{subadditivity}
If $K_1,\dots, K_n$ are compact subsets of the unit disk, then 
$$\frac1{\gamma(K_1\cup\cdots\cup K_n)}\leq\frac1{\gamma(K_1)}+\dots+\frac1{\gamma(K_n)}.$$
\end{lem}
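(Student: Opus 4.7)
The plan is to use the variational characterization of the Robin constant. Recall that $\gamma(K) = \min_{\nu \in \cP(K)} I(\nu)$, where $\cP(K)$ is the set of Borel probability measures on $K$ and $I(\nu) := \iint \log(1/|z-w|)\,d\nu(z)\,d\nu(w)$ is the logarithmic energy; the minimizer is the equilibrium measure $\mu_K$, and one has the monotonicity $\gamma(K') \geq \gamma(K'')$ whenever $K' \subseteq K''$. I set $K := K_1 \cup \cdots \cup K_n$ and take $\mu := \mu_K$, so $I(\mu) = \gamma(K)$. I then partition $K$ into pairwise disjoint Borel pieces by $K_1' := K_1$ and $K_i' := K_i \setminus \bigcup_{j<i} K_j$, and write $\mu_i := \mu|_{K_i'}$, $m_i := \mu(K_i')$, so $\mu = \sum_i \mu_i$ and $\sum_i m_i = 1$.

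Expanding
$$I(\mu) = \sum_{i,j} \iint_{K_i' \times K_j'} \log\tfrac{1}{|z-w|}\,d\mu(z)\,d\mu(w),$$
the cross terms with $i \neq j$ are nonnegative since the kernel $\log(1/|z-w|)$ is nonnegative whenever $|z-w| \leq 1$; thus $\gamma(K) \geq \sum_i I(\mu_i)$. For each $i$ with $m_i > 0$, $\mu_i/m_i$ is a probability measure on $K_i' \subseteq K_i$, so by the variational characterization and monotonicity $I(\mu_i/m_i) \geq \gamma(K_i') \geq \gamma(K_i)$; hence $I(\mu_i) = m_i^2 I(\mu_i/m_i) \geq m_i^2 \gamma(K_i)$, giving the key estimate $\gamma(K) \geq \sum_i m_i^2 \gamma(K_i)$.

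To conclude, I apply Cauchy--Schwarz to the vectors $\bigl(m_i \sqrt{\gamma(K_i)}\bigr)_i$ and $\bigl(1/\sqrt{\gamma(K_i)}\bigr)_i$:
$$1 = \Bigl(\sum_i m_i\Bigr)^2 \leq \Bigl(\sum_i m_i^2 \gamma(K_i)\Bigr) \Bigl(\sum_i \tfrac{1}{\gamma(K_i)}\Bigr),$$
so $\sum_i m_i^2 \gamma(K_i) \geq 1/\sum_i(1/\gamma(K_i))$. Chaining this with the previous estimate yields $\gamma(K) \geq 1/\sum_i (1/\gamma(K_i))$, which is equivalent to the claimed inequality. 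The main subtlety I anticipate is the positivity of the cross terms: $\log(1/|z-w|)$ is only guaranteed nonnegative on sets of diameter $\leq 1$, whereas the unit disk has diameter $2$. One resolves this either by a preliminary rescaling of $K$ into a disk of radius $1/2$ (and checking how the resulting shift of each $\gamma$ by $\log 2$ interacts with the target inequality) or by replacing $\log(1/|z-w|)$ throughout with the manifestly nonnegative kernel $\log(2/|z-w|)$ and absorbing the additive constant at the end. The remaining ingredients---variational definition, monotonicity, Cauchy--Schwarz---are routine.
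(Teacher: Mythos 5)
The paper does not prove this lemma; it is cited verbatim as Theorem 5.1.4 of Ransford's book, so there is no internal proof to compare with, and what follows is a review of your argument on its own terms.

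Your template --- take the equilibrium measure $\mu$ of $K=K_1\cup\cdots\cup K_n$, split $K$ into disjoint Borel pieces $K_i'$, expand the energy, drop the off-diagonal terms, lower-bound each diagonal term by $m_i^2\gamma(K_i)$ via monotonicity, and close with Cauchy--Schwarz --- is indeed the standard route, and every step is sound \emph{provided} the kernel $\log(1/|z-w|)$ is nonnegative on $K\times K$, i.e.\ $\mathrm{diam}(K)\le 1$. You correctly flag that a compact subset of $\DD$ can have diameter up to $2$, so the cross terms in your expansion may be negative and the chain $\gamma(K)\ge\sum m_i^2\gamma(K_i)$ is not justified as written.

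The genuine gap is that neither of your proposed repairs removes this obstruction. Rescaling $K\mapsto K/2$ into $B(0,1/2)$, or equivalently replacing the kernel by $\log(2/|z-w|)$, shifts every Robin constant by $+\log 2$, and your argument then yields only
$$\frac{1}{\gamma(K)+\log 2}\ \le\ \sum_{i}\frac{1}{\gamma(K_i)+\log 2},$$
which is a strictly weaker, inequivalent statement: the additive constant does not ``absorb at the end,'' and monotonicity of $\gamma$ gives no way to strip it off. In fact the lemma as literally stated (bare $\gamma$, sets in $\DD$) is \emph{false}. Take $r\in(1/\sqrt 2,1)$ and let $K_1=\{re^{i\theta}:0\le\theta\le\pi\}$, $K_2=\{re^{i\theta}:\pi\le\theta\le 2\pi\}$ be the two closed half-circles, so $K_1\cup K_2$ is the circle $|z|=r$. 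Then $\gamma(K_1\cup K_2)=\log(1/r)$, while $\text{Cap}(K_j)=r/\sqrt 2$ gives $\gamma(K_j)=\log(1/r)+\tfrac12\log 2$, and the claimed inequality reduces to $\tfrac12\log 2\le\log(1/r)$, i.e.\ $r\le 1/\sqrt 2$, which fails for the chosen $r$. The statement one actually finds in Ransford carries a diameter normalization (replace $\gamma(\cdot)$ by $\log(\mathrm{diam}\,K/\text{Cap}(\cdot))$), and for $K\subset\DD$ this is precisely the $+\log 2$ inequality your corrected argument produces. So your core argument and your instinct about the needed fix are both right; what is missing is the observation that the fix genuinely changes the statement, and the claim that the constant ``absorbs'' is where the proof breaks down.
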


Let $S_n$ be the part of the boundary of the disk $D_n$ lying outside of $\mathbb D$, $S_n:=\partial D_n\setminus \mathbb D$. 
Harmonic measure is always non-atomic, so if $\omega$ is computable, 
then $\omega(S_n)$ is also computable. 
We show:

\begin{proposition}\label{prop:example}
If $\cA_n$ does not halt, then $\omega(S_n)<2^{-9n+2}$. If $\cA_n$ halts, $\omega(S_n)> 2^{-2n-3}$. 
\end{proposition}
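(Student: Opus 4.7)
The plan is to handle the two cases by reducing to harmonic-measure comparisons with the simpler auxiliary domain $\mathbb D\cup D_n$ and then correcting for the arcs $L_n$ using capacity estimates. The key asymmetry is that in the non-halting case $L_n^\infty$ is one long arc leaving only the two tiny gaps $G_n$ of total length $2\cdot 2^{-10n}$, while in the halting case the $\approx 2^{j+1}$ component arcs of $L_n^j$ each have logarithmic capacity $\approx\exp(-2^{2n+2j})/4$, which is super-exponentially small.

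For the lower bound (halting case), I would proceed in three steps. First, reflection symmetry of the disk $D_n$ across the diameter $\partial\mathbb D\cap D_n$ implies that Brownian motion from any point $y$ on this chord exits $D_n$ through $S_n$ with probability exactly $1/2$; combined with the fact that the $\mathbb D$-harmonic measure of the chord from $0$ is $\geq 2\cdot 2^{-2n}$, one gets $\omega_{\mathbb D\cup D_n,0}(S_n)\geq 2^{-2n}$. Second, Lemma \ref{subadditivity} applied to the component arcs of $L_n^j$ yields $\gamma(L_n^j)\geq 2^{2n+j-1}$. Third, setting $\Omega_0:=(\mathbb D\cup D_n)\setminus L_n^j\subset\Omega$, any path in $\Omega_0$ reaching $S_n$ is a valid path in $\Omega$, so $\omega(S_n)\geq \omega_{\Omega_0,0}(S_n)\geq \omega_{\mathbb D\cup D_n,0}(S_n)-\omega_{\Omega_0,0}(L_n^j)$; a standard capacity-hitting estimate gives $\omega_{\Omega_0,0}(L_n^j)\lesssim 1/\gamma(L_n^j)\leq 2^{-(2n+j-1)}$, which is negligible compared to $2^{-2n}$ since $j\geq 8n$. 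Combining, $\omega(S_n)\geq 2^{-2n-1}>2^{-2n-3}$.

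For the upper bound (non-halting case), I would build a superharmonic function $v$ on $\Omega$ with $v\geq\omega_{\Omega,\cdot}(S_n)$ on $\partial\Omega$, then apply the maximum principle to obtain $\omega(S_n)\leq v(0)$. Concretely, set $v\equiv 1$ on $D_n\setminus\mathbb D$, $v\equiv a_m$ on $D_m\setminus\mathbb D$ for $m\neq n$ (with small $a_m\geq 0$ to be chosen), and let $v$ in $\mathbb D$ be the harmonic extension with boundary values $1$ on $G_n$, $a_m$ on the gap-region of $D_m$, and $0$ elsewhere on $\partial\mathbb D$. Then $v(0)=|G_n|/(2\pi)+\sum_{m\neq n}a_m\,\omega_{\mathbb D,0}(\mathrm{gap}_m)$. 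Superharmonicity reduces to checking that the distributional Laplacian of $v$ at each gap-interface is non-positive; a Poisson-kernel estimate shows this holds as soon as each $a_m$ exceeds the ratio of normal derivatives $|\partial_n\omega_{\mathbb D,\cdot}(G_n)|/\partial_n\omega_{\mathbb D,\cdot}(\mathrm{gap}_m)$ at points of $\mathrm{gap}_m$, and a scaling calculation shows these $a_m$ can be chosen so small that the total extra contribution to $v(0)$ is dominated by $|G_n|/(2\pi)=2^{-10n}/\pi$. Thus $\omega(S_n)\leq v(0)\lesssim 2^{-10n}<2^{-9n+2}$.

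The main obstacle is the superharmonicity verification in the upper bound: one must simultaneously control, at every gap-interface of $\Omega$, the sign of the distributional delta produced by the jump in the normal derivative of $v$, using careful Poisson-kernel estimates. A cleaner alternative I would also consider is to bound $\omega(S_n)$ directly by the probability that Brownian motion from $0$ ever enters $D_n\setminus\mathbb D$; since this requires a crossing of $\partial\mathbb D$ at a point in $G_n$, and since the total harmonic mass of all gaps on $\partial\mathbb D$ is small, a union bound over successive $\partial\mathbb D$-crossings of Brownian motion controls this probability up to a bounded factor.
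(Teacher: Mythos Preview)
Your lower bound (halting case) is essentially correct and parallels the paper's argument: both use the reflection symmetry of $D_n$ across its diameter to get a factor $\geq 1/2$, and both use Lemma~\ref{subadditivity} together with the capacity estimate \eqref{eq:harm_est} to show that the arcs $L_n^j$ carry negligible harmonic measure. The paper organizes this slightly differently---it conditions on the first hit of $\partial\mathbb{D}$ and then applies \eqref{eq:harm_est} \emph{inside $D_n$}, where $L_n^j$ is genuinely in the interior and the estimate applies verbatim. Your version would apply it in $\Omega_0=(\mathbb{D}\cup D_n)\setminus L_n^j$; that still works, but only after a conformal map of $\mathbb{D}\cup D_n$ to a disk, a step you do not mention and which needs one line of justification.

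The non-halting upper bound is where your proposal has a genuine gap. The superharmonic-majorant construction cannot be made to work as written: with $v$ constant on each $D_m\setminus\mathbb{D}$ and harmonic in $\mathbb{D}$, the distributional Laplacian along every gap interface is a one-sided normal derivative of a Poisson integral, and you must force all of these to have the correct sign simultaneously by tuning the $a_m$. This is a coupled system over all $m$ (each choice of $a_m$ changes the normal derivative at every other interface), and there is no reason the required inequalities are consistent---you yourself flag this as the main obstacle. Your alternative, a ``union bound over successive $\partial\mathbb{D}$-crossings,'' fails outright: Brownian motion started on $\partial\mathbb{D}$ crosses $\partial\mathbb{D}$ infinitely often immediately, so there is no finite family of crossing events to union over.

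The paper's device for this half is much simpler and bypasses both difficulties. Let $A=[2^{-n}-2^{-2n},\,2^{-n}+2^{-2n}]$ be the full arc of $\partial\mathbb{D}$ inside $\overline{D_n}$ and set $\Omega'=\mathbb{C}\setminus A$. Any path in $\Omega$ from $0$ to $S_n$ must hit $A$, and since $L_n^\infty\subset\partial\Omega$ the \emph{first} hit of $A$ must land in the two tiny end-arcs $K_n=A\setminus L_n^\infty$. Hence $\omega_{\Omega,0}(S_n)\leq\omega_{\Omega',0}(K_n)$, and one explicit conformal map of $(\Omega',0)$ to $(\mathbb{D},0)$ finishes the bound. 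The point you missed is to use the stopping time ``first hit of the arc $A$'' rather than a crossing of the whole circle $\partial\mathbb{D}$; this gives a single well-defined event instead of an infinite union.
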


\begin{proof}
As before, let $B_t$ be the Brownian motion started at $0$ and let $T$ denote the hitting time of $\partial\Omega$, 
$$T:=\inf\{t\ :\ B_t\in\partial\Omega\}.$$
Let us recall that for $E\subset\partial\Omega$ we have 
$$\omega(E)={\mathbb P}[B_T\in E].$$

Assume now that $\cA_n$ does not halt. Let us introduce  a  new domain $\Omega':={\mathbb C}\setminus[2^{-n}-2^{-2n}, 2^{-n}+2^{-2n}]$ 
and $T'$ be the corresponding hitting time. Observe that if $B_T\in S_n$ then $$B_{T'}\in K_n:=[2^{-n}-2^{-2n}, 2^{-n}-2^{-2n}+2^{-10n}]\cup[2^{-n}+2^{-2n}-2^{-10n}, 2^{-n}+2^{-2n}].$$ Thus 
$$\omega_{\Omega,0}(S_n)={\mathbb P}[B_T\in S_n]\leq{\mathbb P}[B_{T'}\in K_n]=\omega_{\Omega',0}(K_n).$$
The desired estimate is now obtained by mapping $(\Omega',0)$ conformally to $({\mathbb D},0)$. 

Assume that $\cA_n$ halts in $j$ steps. To bound $\omega(S_n)$ in this case, we will use the following estimate on harmonic measure (\cite{Garnett-Marshall}, Equation (III.9.2)):

Let $K_0\subset\{z\ :\ 0 < \delta < |z| < r < 1 \}$. Then
\begin{equation}\label{eq:harm_est}\omega_{{\mathbb D} \setminus K_0,0}(K_0) \leq
\frac{\log \left({1 \over {\delta}}\right)} { \gamma(K_0) +\log (1 - r^2)}\end{equation}

Let $T''$ denote the hitting time of  $\partial{\mathbb D}$ by $B_t$, and let 
$$M_n:=\left\{z\in L_n^\infty: \dist(z,\partial\Omega)>2^{-2n-j-4}\right\}$$
be the part of the arc $L_n^\infty$ lying relatively far away from the boundary. 

Conformally mapping $D_n$ to the unit disk centered at $z\in M_n$ and using the estimate \eqref{eq:harm_est} and 
\lemref{subadditivity},
we obtain that for $z\in M_n$ we have
\begin{equation}\label{eq:harm_est_1}
\omega_{D_n\setminus L_n,z}(L_n)\leq 2^{-j+3}< 1/8
\end{equation}

We will also need $T_1\geq T''$ -- the first hitting time of $\partial D_n$ \emph{after} hitting $\partial D$, and 
$T_2 \le T_1$ -- the first hitting time of $\partial D_n \cup L_n$ \emph{after} hitting $\partial D$. 

Note now that 
\begin{equation}\label{eq:length}
{\mathbb P}[B_{T''}\in M_n]=\text{length}(M_n)/2\pi\geq 2^{-2n-1}
\end{equation}

Let us note that by symmetry and estimate \eqref{eq:harm_est_1}, we have
\begin{multline}\label{eq:one}
{\mathbb P}[B_T\in S_n\ |\ B_{T''}\in M_n] \ge {\mathbb P}[B_{T_1}\in S_n\ |\ B_{T''}\in M_n]-
{\mathbb P}[B_{T_2}\in L_n\ |\ B_{T''}\in M_n] =\\ \frac{1}{2} - {\mathbb P}[B_{T_2}\in L_n\ |\ B_{T''}\in M_n]
\geq
\frac{1}{2}-\max_{z\in M_n}\omega_{D_n\setminus L_n,z}(L_n)>1/4.
\end{multline}

The desired lower estimate on $\omega$ is now obtained by combining \eqref{eq:length} and \eqref{eq:one}.
\end{proof}

We now conclude the proof of \propref{hnc}:
\begin{proof}[Proof of \propref{hnc}]
Assume the contrary, that is, suppose that $\omega$ is computable. For every $n\in\NN$ let $\{U^n_j(z)\}_{j=1}^\infty$ 
be a sequence of functions given by:
$$U^n_j(z)=\left\{\begin{array}{l}
1\text{ if }\dist(z, S_n)<2^{-j};\\
0\text{ if }\dist(z,S_n)>2\cdot 2^{-j};\\
1-2^j(d-2^{-j})\text{ if }d=\dist(z,S_n)\in[2^{-j},2\cdot 2^{-j}]
\end{array}\right.$$
We have:
\begin{itemize}
\item[(a)] the functions $U^n_j(z)$ are computable uniformly in $n$ and $j$.
\end{itemize}
Since $\omega$ is non-atomic,
\begin{itemize}
\item[(b)] for a fixed $n$ we have
$$\int U^n_j(z)d\omega>\omega(S_n)\text{ and }\int U^n_j(z)d\omega\underset{j\to\infty}{\longrightarrow}\omega(S_n).$$
\end{itemize}
Similarly, we can costruct a sequence of functions $L^n_j(z)$ such that 
\begin{itemize}
\item[(c)] the functions $L^n_j(z)$ are computable uniformly in $n$ and $j$;
\item[(d)] for a fixed $n$ we have
$$\int L^n_j(z)d\omega<\omega(S_n)\text{ and }\int L^n_j(z)d\omega\underset{j\to\infty}{\longrightarrow}\omega(S_n).$$
\end{itemize}
We leave the details of the second construction to the reader.

By part (3) of \propref{t.comp-measure},  properties (a) and (c) imply that 
the integrals 
$$\int U^n_j(z)d \omega\text{ and }\int L_j^n(z)d\omega$$
are uniformly computable. Consider and algorithm 
$\cA_\text{halt}$ which upon inputting a natural number $n$ does the following:

\begin{enumerate}
\item $j:=1$;
\item evaluate $u_j,l_j$ such that 
$$|u_j-\int U^n_j(z)d \omega|<2^{-20n}\text{ and }|l_j-\int L_j^n(z)d\omega|<2^{-20n};$$
\item if $u_j<2^{-9n+2}+2^{-19n}$ then output $0$ and halt;
\item if $l_j>2^{-2n-3}-2^{-19n}$ then output $1$ and halt;
\item $j:=j+1$ and go to (2). 
\end{enumerate}

By \propref{prop:example} and properties (b) and (d), we have the following:
\begin{itemize}
\item if $\cA_n$ halts then $\cA_\text{halt}$ outputs $1$ and halts, and  
\item if $\cA_n$ does not halt then $\cA_\text{halt}$ outputs $0$ and halts.
\end{itemize}
Thus $\cA_\text{halt}$ is an algorithm solving the Halting Problem, which contradicts the algorithmic unsolvability of the Halting Problem.
\end{proof}

\bibliographystyle{plain}
\bibliography{biblio}

\end{document}